\theoremstyle{definition}
\newtheorem{thm}{Theorem}[section]
\newtheorem{cor}[thm]{Corollary}
\newtheorem{lem}[thm]{Lemma}
\newtheorem{rem}[thm]{Remark}
\newtheorem{prop}[thm]{Proposition}
\newtheorem{defn}[thm]{Definition}
\newtheorem{example}[thm]{Example}
\numberwithin{equation}{section}
\def\N{{\mathbbm N}}
\def\Z{{\mathbbm Z}}
\def\Q{{\mathbbm Q}}
\def\F{{\mathbbm F}}
\def\1{{\mathbbm{1}}}
\newcommand{\Hom}{{\rm Hom}}
\newcommand{\HOM}{{\rm HOM}}
\renewcommand{\to}{\rightarrow}
\newcommand{\End}{{\rm End}}
\newcommand{\END}{{\rm END}}
\def\dif{\partial}
\def\lra{{\longrightarrow}}
\def\Id{\mathrm{Id}}
\def\mc{\mathcal}
\def\mf{\mathfrak}
\def\shuffle{\,\raise 1pt\hbox{$\scriptscriptstyle\cup{\mskip
               -4mu}\cup$}\,}
\newcommand{\refequal}[1]{\xy {\ar@{=}^{#1}
(-1,0)*{};(1,0)*{}};
\endxy}
\newcommand{\sym}{\mathrm{Sym}}
\newcommand{\pol}{\mathrm{Pol}}
\newcommand{\s}{\mathcal{S}}
\newcommand{\mH}{\mathrm{H}} 
\title{A categorification of a quantum Frobenius map}
\author{You Qi}
\date{\today}
\begin{document}
%

\maketitle

\begin{abstract}
A quantum Frobenius map a la Lusztig for $\mathfrak{sl}_2$ is categorified at a prime root of unity.
\end{abstract}

\setcounter{tocdepth}{2} \tableofcontents


\section{Introduction}
At a root of unity, the quantum Frobenius map was introduced by Lusztig \cite{LusModularRepQGp} as a characteristic zero analogue of the (Hopf dual of the) classical Frobenius map for algebraic groups in positive characteristic. It plays an essential role in the study of representation theory for quantum groups at a root of unity, and serves as a basic building block for the celebrated Kazhdan-Lusztig equivalence \cite{KaLuIV}. We refer the reader to the monograph of Lusztig's \cite{Lus4} for some fundamental applications.

Since then, important progress has been made towards   understanding Lusztig's powerful construction from a more geometric or categorical point of view. Notably the work of Arkhipov-Gaitsgory~\cite{ArGa} and Arkhipov-Bezrukavnikov-Ginzburg~\cite{ABG}, introduces, as a biproduct of their study, a geometric context to exhibit and utilize the quantum Frobenius map. Another beautiful and more categorical interpretation via the Hall algebra of Ringel-Lusztig has been devoloped by McGerty \cite{McG}.

The current work is aimed at providing yet another approach to understanding the quantum Frobenius map via categorification, in the simplest case of $\mathfrak{sl}_2$ at a prime root of unity. We hope this method will be prove applicable in the future, when one studies the categorical representation theory  $2$-Kac-Moody algebras, in the sense of Khovanov-Lauda-Rouquier \cite{KL1,KL2, KL3, Rou2}, at a prime root of unity. 

Below we give a brief summary of the content of the paper. We will assume that the reader has some basic familiarity of the work \cite{EQ1, EQ2, KLMS}, but we will recall all necessary facts from the references when needed.

 In Section \ref{sec-sym-polynomial}, we recall some constructions in the theory of $p$-differential graded algebras (p-DG algebras). We introduce the ``slash-zero formal" $p$-DG algebras, which are analogues of formal DG algebras. As in the ordinary DG case, slash-zero formality allows one to compute algebraic invariants of a $p$-DG algebra, such as its Grothendieck group, by passing to its slash cohomology $p$-DG algebra. Next, we give a family of examples of slash-zero formal $p$-DG algebras coming from symmetric polynomials equipped with a $p$-differential. These examples are studied in \cite{KQ,EQ1,EQ2}, and have played central\footnote{These polynomial algebras are literally the center of the categories used to categorify quantum $\mf{sl}_2$.} roles in categorifying quantum $\mf{sl}_2$, both finite and infinite dimensional versions, at a prime root of unity.

In Section \ref{sec-Gr-bim}, we will recall the half versions (upper or lower nilpotent part) of quantum $\mf{sl}_2$ at a generic value and at a prime root of unity (in the sense of Lusztig), as well as the definition of the quantum Frobenius map. Then we will review briefly the categorification of the one-half of the big quantum group \cite{EQ2} by the $p$-DG category of symmetric polynomials $\mc{D}(\sym)$. A (weak) categorification of the quantum Frobenius map at a $p$th root of unity is then constructed (\ref{thm-weak-categorification-half-sl2}) by restricting to a full subcategory $\mc{D}_{(p)}(\sym)$ inside $\mc{D}(\sym)$, which is generated by symmetric polynomials whose number of variables are divisible by $p$. The effect of this restriction map is readily read off on the Grothendieck group level to be the quantum Frobenius map. The next subsection is then devoted to enhance the weak categorification result by analyzing, diagrammatically, the $2$-morphism $p$-DG algebras that control the full subcategory $\mc{D}_{(p)}(\sym)$. The $2$-morphism $p$-DG algebras turn out to be slash-zero formal (Theorem \ref{thm-nil-Hecke-holds}), and are quasi-isomorphic to nilHecke algebras with the zero differential. The verification of the slash-zero formality constitutes the technical core of this Section.

The last Section \ref{sec-full-sl2} is dedicated to performing a ``doubling construction'' of the ``half'' categorification Theorem \ref{thm-nil-Hecke-holds}, thus giving rise to a categorified quantum Frobenius for an idempotented version of $\mf{sl}_2$ at a prime root of unity. We will achieve this by using the biadjuntions of \cite{KLMS} equipped with $p$-differentials, which has been studied in \cite{EQ2}.  More precisely, we investigate the full $p$-DG $2$-subcategory $\mc{D}_{(p)}(\dot{\mc{U}})$ inside the $p$-differential graded thick calculus $\mc{D}(\dot{\mc{U}})$ generated by $\mc{E}^{(p)}\1_{kp}$ and $\mc{F}^{(p)}\1_{kp}$. The main Theorem \ref{thm-main} shows that the restriction functor from $\mc{D}(\dot{\mc{U}})$ to $\mc{D}_{(p)}(\dot{\mc{U}})$ indeed categorifies the quantum Frobenius for $\mathfrak{sl}_2$. The proof of the Theorem uses the simplified relation-checking criterion of Brundan \cite{Brundan2KM} for Khovanov-Lauda-Rouquier's $2$-Kac-Moody categorification theorem. In the course of the proof, we also obtain a reduction result (Theorem \ref{thm-4-generators}) which shows that $\mc{D}(\mc{\dot{U}})$ can be generated by $1$-morphisms of the form $\mc{E}\1_n$, $\mc{E}^{(p)}\1_n$, $\mc{F}\1_n$, $\mc{F}^{(p)}\1_n$, with $n$ ranging over $\Z$. This categorifies the well-known fact that the big quantum group at a prime root of unity does not need all divided power elements to generate it as an algebra, but only requires undivided powers and the $p$th divided powers.

We conclude this introduction (see also Remark \ref{rmk-wrong-way-map}) by pointing out that the induction functor from $\mc{D}_{(p)}(\sym)$ (resp.~$\mc{D}_{(p)}(\mc{\dot{U}})$) to the ambient category $\mc{D}(\sym)$ (resp. $\mc{D}(\dot{\mc{U}})$)  is, in hindsight, a categorification of the canonical section for the quantum Frobenius map. 
A more ``correct'' categorical lifting of the quantum Frobenius should first be a categorification of the quotient of big quantum $\mathfrak{sl}_2$ by the ideal generated by the small quantum group, and then identifying the $p$-DG quotient category with categorified $\mathfrak{sl}_2$ with the zero differential.  We would like to develop a proper context for studying this ``$p$-DG quotient'' construction, which parallels Drinfeld's \emph{DG quotient of DG categories} \cite{DrDG}, in the framework of hopfological algebra \cite{Hopforoots, QYHopf}. This approach would hopefully be generalizable to quantum groups for higher rank Lie algebras.

\paragraph{Acknowledgments.} The author is very grateful to Rapha\"{e}l Rouquier for the inspiring motivational discussions on the project. He thanks Joshua Sussan and the referee for carefully reading the manuscript and suggesting many helpful corrections. He would also like to thank Igor Frenkel and Mikhail Khovanov for their constant support and encouragement. 


\section{Symmetric polynomials with a \texorpdfstring{$p$}{p}-differential} \label{sec-sym-polynomial}
In this Section we will recall  $p$-differential graded ($p$-DG) algebras and their hopfological algebra, as discussed in \cite{Hopforoots, QYHopf}. The example of symmetric polynomials with a differential found in \cite{KQ} is then introduced, which will play a key role in categorifying the quantum Frobenius map later.

\paragraph{Conventions.}Once and for all, fix $\Bbbk$ to be a field of finite characteristic $p>0$. Tensor products of vector spaces over $\Bbbk$ will be written as $\otimes$ without decoration. Throughout, we take the set of natural numbers $\N$ to contain the zero element.

\subsection{Slash-zero formal $p$-DG algebras} 
\paragraph{Slash cohomology.} To begin with, we recall the definition of $p$-differential graded algebras and slash cohomology. The reader is referred to \cite[Section 2]{KQ} for a more full summary and \cite{Hopforoots, QYHopf} for further details.

\begin{defn}\label{def-p-DGA} A \emph{$p$-differential graded algebra}, or a $p$-DG algebra for short, is a $\Z$-graded $\Bbbk$-algebra equipped with a degree-two endomorphism $\dif_A:A\lra A$, such that the \emph{$p$-nilpotency condition} and the \emph{Leibnitz rule}
\[
\dif_A^p(a)=0, \quad \quad \dif_A(ab)=\dif_A(a)b+a\dif_A(b)
\] 
are satisfied for any elements $a,b\in A$.

A \emph{left $p$-DG module $M$ over $A$} is a $\Z$-graded left $A$-module together with a degree-two endomorphism $\dif_M:M\lra M$, such that, for any $a\in A$ and $x\in M$, 
\[
\dif_M^p(x)=0, \quad \quad \dif_{M}(ax)= \dif_A(a)x+a\dif_M(x).
\]
A \emph{morphism} of $p$-DG modules is an $A$-linear module map that commutes with differentials. 
\end{defn}

One can similarly define the notion of a right $p$-DG module over a $p$-DG algebra. When the context is clear, we will usually drop the subscripts from the differentials for $p$-DG algebras and modules. 

As the simplest example, the ground field $\Bbbk$ equipped with the zero differential is a $p$-DG algebra. A left or right $p$-DG module over $\Bbbk$ is called a \emph{$p$-complex}. 

We next define the \emph{slash cohomology} groups of a $p$-complex following \cite[Section 2.1]{KQ}. 

\begin{defn}\label{def-slash-cohomology}
Let $U$ be a $p$-complex over $\Bbbk$. For each $k\in \{0,\dots, p-2\}$, the $k$th \emph{slash cohomology} of $U$ is the graded vector space
\[
\mH_{/k}(U):=\dfrac{\mathrm{Ker}(\dif^{k+1})}{\mathrm{Im}(\dif^{p-k-1})+\mathrm{Ker}(\dif^k)}.
\] 
Elements in $\mathrm{Ker}(\dif^{k+1})$ will be called \emph{$k$-cocycles}, while those in $\mathrm{Im}(\dif^{p-k-1})$ will be referred to as \emph{$k$-coboundaries}.

The vector space $\mH_{/k}(U)$ inherits a $\Z$-grading coming from that of $U$. The (total) \emph{slash cohomology} of $U$ is the sum
\[
\mH_{/}(U):=\bigoplus_{k=0}^{p-2}\mH_{/k}(U).
\]
\end{defn}

It is easy to see that the differential on $U$ induces a $\Bbbk$-linear map
\[
\dif: \mH_{/k}(U)\lra \mH_{/k-1}(U),
\]
so that $\mH_{/}(U)$ collects to be a $p$-complex that satisfies $\dif^{p-1}|_{\mH_{/}(U)} = 0$. As a $p$-complex, we have a decomposition 
\[
U \cong \mH_{/}(U)\oplus P(U),
\]
where $P(U)$ is a \emph{contractible} $p$-complex. A \emph{contractible} $p$-complex is a $p$-complex that is isomorphic to a direct sum of $p$-dimensional complexes of the form
\[
0\lra \underline{\Bbbk}\stackrel{\cdot 1}{\lra} \Bbbk\stackrel{\cdot 1}{\lra} \cdots  \stackrel{\cdot 1}{\lra}\Bbbk\lra 0,
\]
where the underlined $\underline{\Bbbk}$ may start in any $\Z$ degree. The identity morphism of this complex can be written as
\[
\mathrm{Id} = \sum_{i=0}^{p-1}\dif^i \circ h \circ \dif^{p-i},
\]
where $h$ is the $\Bbbk$-linear map which identifies the right-most copy of $\Bbbk$ with the initial copy while being zero everywhere else:
\[
\begin{gathered}
\xymatrix{
0 \ar[r] & \underline{\Bbbk} \ar[r]^\dif\ar[d]_{\mathrm{Id}} & \Bbbk \ar[r]^\dif \ar[d]_{\mathrm{Id}}&  \cdots \ar[r]^\dif & \Bbbk \ar[dlll]|-{h} \ar[r] \ar[d]_{\mathrm{Id}}& 0\\
0 \ar[r] & \underline{\Bbbk} \ar[r]_\dif & \Bbbk \ar[r]_\dif &  \cdots \ar[r]_\dif & \Bbbk \ar[r] & 0
}
\end{gathered} \ .
\]
This definition of a contractible complex is analogous to the usual definition of a contractible complex of (super) vector spaces. In particular, in characteristic two, being a direct sum of complexes of the form
\[
0\lra \Bbbk \stackrel{\cong}{\lra} \Bbbk \lra 0
\]
is equivalent to having the identity morphism be nulhomotopic.

More generally, let $M,N$ be $p$-DG modules over some $p$-DG algebra $A$. A morphism of $p$-DG modules $f:M\lra N$ is called \emph{null-homotopic} if there is a degree-$(2-2p)$ $A$-linear map $h:M\lra N$ such that
\[
f=\sum_{i=0}^{p-1}\dif_N^{i}\circ h \circ \dif_M^{p-1-i}.
\]

Unlike the usual cohomology of a DG algebra, the slash cohomology space of a $p$-DG algebra need not satisfy the usual K\"{u}nneth-type equation
\[
\mH_{/}(A\otimes A)\cong \mH_{/}(A)\otimes \mH_{/}(A).
\] 
However, such a formula holds if the slash-cohomology of $A$ is particularly simple. Namely, when $\mH_/(A)=\mH_{/0}(A)$, or equivalently, when $\mH_/(A)$ is a $p$-complex with the trivial (zero) differential, the above tensor product formula holds. More generally, we have the following.

\begin{lem}\label{lemma-Kunneth}
Let $A$ be a $p$-DG algebra whose slash cohomology $\mH_/(A)$ is a $p$-complex with the zero differential, and let $M$ be a $p$-DG module. Then the natural action map
$
m: A\otimes M\lra M
$
induces an associative action on slash cohomology
\[
m_/: \mH_/(A)\otimes \mH_/(M)\lra \mH_/(M).
\]
\end{lem}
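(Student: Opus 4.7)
The plan is to exploit the decomposition of any $p$-complex into its slash cohomology and a contractible summand, combined with the observation that tensoring a contractible $p$-complex with anything is again contractible. Writing $A\cong \mH_/(A)\oplus P(A)$ as $p$-complexes (with $P(A)$ contractible) and tensoring with $M$ over $\Bbbk$ gives
\[
A\otimes M \;\cong\; \bigl(\mH_/(A)\otimes M\bigr) \,\oplus\, \bigl(P(A)\otimes M\bigr).
\]
The key auxiliary claim is that $P(A)\otimes M$ is contractible: the algebra $\Bbbk[\dif]/(\dif^p)$ is a Hopf algebra which is also Frobenius, so that every free (equivalently, contractible) module tensored over $\Bbbk$ with an arbitrary module is again free. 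Applying $\mH_/$ kills the second summand and reduces the computation to $\mH_/(\mH_/(A)\otimes M)$.

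Here is where I would invoke the hypothesis on $A$. Since $\mH_/(A)$ has zero differential, the Leibniz rule forces the differential on $\mH_/(A)\otimes M$ to be $1\otimes \dif_M$. Choosing a homogeneous $\Bbbk$-basis of $\mH_/(A)$ then realises $\mH_/(A)\otimes M$ as a direct sum of grading-shifted copies of $M$ as a $p$-complex, and since $\mH_/$ commutes with direct sums and grading shifts one obtains
\[
\mH_/(A\otimes M) \;\cong\; \mH_/\!\bigl(\mH_/(A)\otimes M\bigr) \;\cong\; \mH_/(A)\otimes \mH_/(M).
\]
Composing with the image of $m$ under $\mH_/$ then produces the desired action map $m_/$. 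For associativity, I would apply the same analysis to the three-fold tensor $A\otimes A\otimes M$; crucially, $\mH_/(A\otimes A)\cong \mH_/(A)\otimes \mH_/(A)$ again has the zero differential, so the iteration is legal and yields a canonical identification $\mH_/(A\otimes A\otimes M)\cong \mH_/(A)\otimes \mH_/(A)\otimes \mH_/(M)$. Naturality of these isomorphisms transports the associativity of $m$ into that of $m_/$.

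The one genuine technical point is the contractibility of $P(A)\otimes M$ for arbitrary $p$-DG modules $M$; the rest of the argument is bookkeeping via naturality of $\mH_/$. This contractibility can be handled either by an explicit null-homotopy, extending the standard null-homotopy of $\mathrm{Id}_{P(A)}$ along the Leibniz rule, or more conceptually by invoking the Hopf-Frobenius structure on $\Bbbk[\dif]/(\dif^p)$, under which the class of contractible $p$-complexes forms a tensor ideal. This is the step I expect to require the most care to justify cleanly.
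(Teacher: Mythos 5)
Your proposal is correct and follows essentially the same route as the paper: decompose into slash cohomology plus a contractible summand, use that contractible $p$-complexes form a tensor ideal (the paper states this fact directly; your Hopf--Frobenius justification is the standard reason it holds), and identify $\mH_/(A\otimes M)$ with $\mH_/(A)\otimes\mH_/(M)$ using that $\mH_/(A)$ carries the zero differential. The only cosmetic difference is that the paper decomposes both $A$ and $M$ and invokes Krull--Schmidt for $p$-complexes to match up the non-contractible summands, whereas you decompose only $A$ and compute $\mH_/$ of the direct sum directly, which is equally valid.
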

\begin{proof}
As $p$-complexes, we have
\[
A\cong \mH_/(A)\oplus P(A),\quad \quad M\cong \mH_/(M)\oplus P(M),
\]
which implies
\[
A\otimes M\cong \mH_/(A)\otimes \mH_/(M) \oplus P(A)\otimes \mH_/(M) \oplus \mH_/(A)\otimes P(M) \oplus P(A)\otimes P(M).
\]
If $\mH_/(A)$ is a trivial $p$-complex, then $\mH_/(A)\otimes \mH_/(M)$ is isomorphic to a direct sum of $\mathrm{dim}(\mH_/(A))$ copies of $\mH_/(M)$, and thus does not contain any contractible $p$-complex. The remaining three direct summands are all contractible, since a tensor product of any $p$-complex with a contractible $p$-complex is contractible. On the other hand, we have
\[
A\otimes M \cong \mH_/(A\otimes M)\oplus P(A\otimes M).
\]
As the abelian category of $p$-complexes is Krull-Schmidt (after all, it consists of graded modules over the finite-dimensional graded Hopf algebra $\Bbbk[\dif]/(\dif^p)$), we obtain an isomorphism of non-contractible $p$-complex summands
\[
\mH_/(A\otimes M)\cong \mH_/(A)\otimes \mH_/(M).
\]
The result then follows just as in the usual DG algebra case.
\end{proof}

\paragraph{Derived categories.} Let $A$ be a $p$-DG algebra. The \emph{$p$-DG homotopy category $\mc{H}(A,\dif)$} (or $\mc{H}(A)$ for short) is the categorical quotient of the abelian category of $p$-DG modules over $A$ by the ideal of null-homotopic morphisms. The \emph{$p$-DG derived category} $\mc{D}(A,\dif)$ (or $\mc{D}(A)$ for short) of $p$-DG modules is obtained from $\mc{H}(A,\dif)$ by inverting \emph{quasi-isomorphisms}. Recall that a morphism $f:M\lra N$ between two $p$-DG modules is a quasi-isomorphism if 
$$f_/: \mH_/(M)\lra \mH_/(N)$$
is an isomorphism of $p$-complexes. The homotopy category $\mc{H}(A)$ and the derived category $\mc{D}(A)$ serve as categorical invariants associated to any $p$-DG algebra $A$. We refer the reader to \cite[Section 3--4]{QYHopf} for some basic structural results about these triangulated categories.

An equivalent construction of the derived category can be obtained by restricting, in the homotopy category, to a smaller class of $p$-DG modules which are considered to be cofibrant objects. A $p$-DG module $P$ is called \emph{cofibrant} if, given any surjective quasi-isomorphism of $p$-DG modules $f:M\lra N$, the induced map of $p$-complexes
\begin{equation}
\HOM_A(P,M) \lra \HOM_A(P,N), \quad h\mapsto f\circ h
\end{equation}
is a quasi-isomorphism. Here, we have adopted the convention that
\begin{equation}
\HOM_A(M,N)=\bigoplus_{i\in \Z}\Hom_A^i(M,N),
\end{equation}
where $\Hom_A^i(M,N)$ stands for the space of homogeneous $A$-module maps of degree $i$.
Any $p$-DG module $M$ is quasi-isomorphic to some cofibrant module $\mathbf{p}(M)$ (e.g.~its bar resolution), and 
\begin{equation}\label{eqn-p-dg-morphism-space}
\Hom_{\mc{D}(A)}(M, N)\cong \Hom_{\mc{H}(A)}(\mathbf{p}(M),N)\cong \mH_{/0}^0(\HOM_A(\mathbf{p}(M), N)).
\end{equation}

\begin{rem}\label{rmk-enhancement}
At this point, we should emphasize that the triangulated category $\mc{D}(A)$ has an \emph{enhanced $p$-DG structure} on it, that is, the existence of an ``internal Hom'' enriched in $p$-complexes. Namely, for any cofibrant $p$-DG modules $M$, $N$, the $p$-complex $\HOM_A(M,N)$, up to homotopy, governs the morphism spaces in the derived category according to \eqref{eqn-p-dg-morphism-space}. In particular, given a finite collection of cofibrant $p$-DG modules $M_i$ ($i\in I$), it will usually be convenient to consider the $p$-DG endomorphism algebra
\begin{equation}
\mathrm{END}_A(\bigoplus_{i\in I}M_i)\cong \bigoplus_{i,j\in I}\HOM_A(M_i,M_j)
\end{equation}
which is equipped with the $p$-differential 
\[
\dif(f):=\dif_{M_j}\circ f-f\circ\dif_{M_i}
\]
for any $f\in \HOM_A(M_i,M_j)$. The endomorphism algebra then descends to a strictly associative algebra-object in the homotopy category of $p$-complexes, which controls the morphism spaces among $M_i$'s in the derived category by passing to $\mH_{/0}^0$ (equation \eqref{eqn-p-dg-morphism-space}). 

In particular, in the derived category $\mc{D}(A)$, if the collection consists of only the cofibrant object $A$ as a left regular module over the $p$-DG algebra $A$, then the endomorphism $p$-DG algebra
\[
\END_A(A)=A
\]
recovers the algebra $A$ with the natural differential, while
\[
\End_{\mc{D}(A)}(A)=\mH_{/0}^0(A).
\]
\end{rem}

A $p$-DG module $M$ is called \emph{compact} if its image in the derived category satisfies the condition that
\[
\mathrm{Hom}_{\mc{D}(A)}(M,-):\mc{D}(A)\lra \Bbbk-\mathrm{mod}
\]
commutes with arbitrary direct sums.
The \emph{compact derived category} $\mc{D}^c(A)$, the strictly full subcategory in $\mc{D}(A)$ consisting of compact objects,  is a categorification of its own \emph{Grothendieck group} $K_0(A):=K_0(\mc{D}^c(A))$. This Grothendieck group carries a module structure over the ring
\begin{equation}
\mathbb{O}_p:=K_0(\mc{D}^c(\Bbbk))\cong \frac{\Z[v^{\pm 1}]}{\Psi_p(v^2)}.
\end{equation}
Here $\Psi_p$ is the cyclotomic polynomial $\Psi_p(v)=1+v+\cdots+ v^{(p-1)}$, and $\mc{D}^c(\Bbbk)$ is the derived, or equivalently, homotopy category of finite-dimensional $p$-complexes over the ground field $\Bbbk$. Multiplication by $v$ comes from the decategorification of the grading shift functor on $p$-complexes. To avoid confusion, we will denote the image of $v$ under the canonical map $\Z[v^{\pm 1}]\lra \mathbb{O}_p$ by $q$, so that $q^{2p}=1$.

\paragraph{Slash-zero formality.} Below let us introduce a class of $p$-DG algebras that always satisfy the conditions of Lemma \ref{lemma-Kunneth}. The   slash cohomology of such an algebra is always an algebra with the trivial $p$-DG structure.

\begin{defn}\label{def-slash-zero-formal} 
A $p$-DG algebra $A$ is said to be \emph{slash-zero formal} if there is a quasi-isomorphism between $A$ and $\mH_{/0}(A)$.
\end{defn}

\begin{rem}[On the notion of quasi-isomorphism]\label{rmk-roofs}
We will allow a more general notion of quasi-isomorphic of $p$-DG algebras, as is usually adopted in the DG case. We will say that two $p$-DG algebras $A$, $B$ are \emph{quasi-isomorphic} if there is a sequence $p$-DG algebras $Z_1, Z_2,\dots, Z_k$  and  ``roofs'' of  quasi-isomorphisms connecting them:
\[
\xymatrix{
 & Z_1\ar[dl]\ar[dr] && \dots \ar[dr]\ar[dl]  && Z_k \ar[dl]\ar[dr] & \\
A && Z_2  && Z_{k-1} && B 
}
\]
The resulting derived categories of $\mc{D}(A)$ and $\mc{D}(B)$ are equivalent under repeated applications of induction and restriction functors (see \cite[Corollary 8.18]{QYHopf}).

In particular, a $p$-DG algebra $A$ will be called \emph{slash-zero formal} if $A$ and $\mH_{/0}(A)$ are quasi-isomorphic. 
\end{rem}

\begin{example}\label{eg-easy-formal-p-dga}
We list some elementary examples of $p$-DG algebras that are slash-zero formal.
\begin{itemize}
\item[(i)] If $A$ is a usual $\Bbbk$-algebra with the trivial $p$-differential, then the $p$-DG algebra $(A,\dif_0\equiv 0)$ is slash-zero formal.
\item[(ii)] If $A$ is an acyclic $p$-DG algebra (i.e., $\mH_/(A)=0$), then the inclusion of $0$ into the algebra is a quasi-isomorphism and $A$ is thus slash-zero formal.
\item[(iii)] Consider the polynomial algebra $\Bbbk[x]$ with the differential $\dif(x^k)=kx^{k+1}$. It is readily seen that the inclusion of the unit map
\[
(\Bbbk, \dif_0)\lra (\Bbbk[x], \dif),\quad 1\mapsto 1
\]
is a $p$-DG algebra homomorphism that induces an isomorphism on slash cohomology. Thus $(\Bbbk[x],\dif)$ is slash-zero formal. The canonical projection map $\Bbbk[x]\lra \Bbbk$, $x\mapsto 0$ is another $p$-DG algebra homomorphism that realizes the quasi-isomorphism.
\end{itemize}
We will see more examples that are similar to (iii) in the next subsection.
\end{example}

\paragraph{Derived categories of slash-zero formal $p$-DG algebras.} When a $p$-DG algebra $A$ is slash-zero formal, the derived category $\mc{D}^c(A)$ can be identified with that of its slash cohomology algebra as follows.

\begin{lem}\label{lemma-formal-derived-equivalence}
Let $A$ be $p$-DG algebra together with a quasi-isomorphism $\phi: A \lra \mH_{/0}(A)$. Then derived induction and restriction along $\phi$ induces equivalences of triangulated categories
\[
\phi^*: \mc{D}^c(A)\lra \mc{D}^c(\mH_{/0}(A)), \quad \quad \phi_*:\mc{D}^c(\mH_{/0}(A))\lra \mc{D}^c(A)
\]
that are quasi-inverses of each other.
\end{lem}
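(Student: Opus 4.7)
The plan is to invoke the $p$-DG analogue of Keller's derived Morita theorem, as formulated in \cite[Corollary 8.18]{QYHopf} and foreshadowed in Remark \ref{rmk-roofs}, and then to confirm that the resulting equivalence restricts to compact subcategories. Write $B := \mH_{/0}(A)$ for brevity. Derived induction $\phi^*$ is computed on a cofibrant resolution $\mathbf{p}(M)\to M$ as $\phi^*(M) := B\otimes_A \mathbf{p}(M)$, while restriction $\phi_*$ sends a $B$-module $N$ to $N$ with its $A$-action pulled back along $\phi$. A routine verification shows that $(\phi^*,\phi_*)$ is an adjoint pair on the full derived categories $\mc{D}(A)$ and $\mc{D}(B)$.

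The heart of the argument is to check that the unit $\eta_M\maps M\to \phi_*\phi^*M$ is a quasi-isomorphism for every cofibrant $A$-module $M$. Unwinding, $\eta_M$ identifies with the map $\phi\otimes_A \id_M\maps A\otimes_A M\to B\otimes_A M$. When $M=A$ this is literally $\phi$, a quasi-isomorphism by hypothesis. The general case uses the defining property of cofibrancy: tensoring with a cofibrant module preserves quasi-isomorphisms of right $A$-modules. This preservation is precisely the point where slash-zero formality enters, since Lemma \ref{lemma-Kunneth}, whose hypothesis is fulfilled by $B$ carrying the zero $p$-differential, guarantees that $(-)\otimes B$ respects the decomposition of $p$-complexes into slash cohomology and a contractible summand. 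The counit $\varepsilon_N\maps \phi^*\phi_* N\to N$ is handled symmetrically after taking a cofibrant $A$-module replacement of $\phi_* N$.

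Once both $\eta_M$ and $\varepsilon_N$ are established as quasi-isomorphisms, $\phi^*$ and $\phi_*$ descend to mutually quasi-inverse equivalences $\mc{D}(A)\iso \mc{D}(B)$. Since both functors commute with arbitrary small coproducts ($\phi^*$ as a left adjoint and $\phi_*$ directly as restriction) and compactness is an intrinsic triangulated notion preserved under any coproduct-preserving triangulated equivalence, the equivalence restricts to $\mc{D}^c(A)\iso \mc{D}^c(B)$, which is the desired statement.

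The principal technical obstacle is the preservation of quasi-isomorphisms by $B\otimes_A(-)$ on cofibrant objects: this is exactly the reason slash-zero formality (rather than some weaker acyclicity-type hypothesis) is required, and it is what makes Lemma \ref{lemma-Kunneth} indispensable in the proof. Once this preservation is secured, everything else is formal and follows the classical DG Morita playbook.
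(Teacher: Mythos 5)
Your proposal ultimately rests on the same pillar as the paper's proof: the general fact, \cite[Corollary 8.18]{QYHopf}, that derived induction and restriction along \emph{any} quasi-isomorphism of $p$-DG algebras are mutually quasi-inverse triangulated equivalences. The paper's entire proof is that citation plus the (automatic) restriction to compact objects, so your conclusion is correct and your overall route is the same.

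However, the part you single out as ``the heart of the argument'' misidentifies what is doing the work. You claim that the preservation of quasi-isomorphisms by $B\otimes_A(-)$ on cofibrant modules ``is exactly the reason slash-zero formality is required'' and that Lemma \ref{lemma-Kunneth} is ``indispensable.'' This inverts the logic of the lemma. Slash-zero formality is the \emph{hypothesis} — it is precisely the assertion that a quasi-isomorphism $\phi\colon A\to \mH_{/0}(A)$ exists (possibly through a roof of such maps). Once $\phi$ is given, the equivalence $\mc{D}(A)\simeq\mc{D}(\mH_{/0}(A))$ follows from the completely general statement for arbitrary quasi-isomorphisms of $p$-DG algebras, with no formality input: the preservation of quasi-isomorphisms under tensoring against a cofibrant module is exactly what cofibrancy (equivalently, the bar-resolution construction of $\mathbf{p}(M)$) buys you in hopfological algebra, independent of whether either algebra is formal. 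Lemma \ref{lemma-Kunneth} concerns the K\"{u}nneth-type behavior of the slash cohomology of a tensor product over $\Bbbk$ and plays no role in the Morita-type argument. The misattribution does not invalidate your conclusion, since the cited corollary carries the proof regardless, but if you were asked to supply the proof of Corollary 8.18 itself along the lines you sketch, the appeal to formality and to Lemma \ref{lemma-Kunneth} would be the wrong tool.
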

\begin{proof}
This follows from the more general fact that, if $\phi:B\lra A$ is a quasi-ismorphism of $p$-DG algebras, then
\[
\phi^*: \mc{D}(A)\lra \mc{D}(B), \quad \quad \phi_*:\mc{D}(B)\lra \mc{D}(A)
\] 
are equivalences of triangulated categories that are quasi-inverse to each other. See \cite[Corollary 8.18]{QYHopf}.
\end{proof}

The Lemma implies that, when $A$ is slash-zero formal, its $p$-DG Grothendieck group $K_0(A)$ can be computed from the usual Grothendieck group of the graded associative algebra $\mH_/(A)$. Let us temporarily write the usual Grothendieck group of $\mH_/(A)$ as $K_0^\prime(\mH_/(A))$.  Below we denote the decategorification of the grading shift functor on $\mH_/(A)$-modules by $v$, so that $K_0^\prime(\mH_/(A))$ becomes a $\Z[v^{\pm 1}]$-module. We will only state the following less general version that we will use later.

\begin{cor}\label{cor-formal-pDGA-K-group}
Let $A$ be a slash-zero formal $p$-DG algebra whose $\mH_/(A)$ is a graded artinian algebra of finite homological dimension. Then its $p$-DG Grothendieck group equals
\[
K_0(A)\cong K_0^\prime(\mH_/(A))\otimes_{\Z[v^{\pm 1}]}\mathbb{O}_p.
\]
\end{cor}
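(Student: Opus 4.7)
The plan is two-step: first apply Lemma \ref{lemma-formal-derived-equivalence} to transport the computation to the slash cohomology algebra, then establish a Künneth-type identification of the Grothendieck group of a graded algebra carrying the trivial $p$-differential. Setting $B := \mH_/(A) = \mH_{/0}(A)$ and using the slash-zero formality hypothesis, Lemma \ref{lemma-formal-derived-equivalence} furnishes mutually quasi-inverse triangulated equivalences between $\mc{D}^c(A)$ and $\mc{D}^c(B)$. Since derived induction and restriction along any $p$-DG algebra morphism intertwine grading shift functors, these equivalences are $\mathbb{O}_p$-linear, and passing to Grothendieck groups gives an $\mathbb{O}_p$-linear isomorphism $K_0(A)\cong K_0(B)$. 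It therefore suffices to prove
\[
K_0(B)\cong K_0^\prime(B)\otimes_{\Z[v^{\pm 1}]}\mathbb{O}_p.
\]

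Because the differential on $B$ is trivial, a $p$-DG module over $(B,0)$ is precisely a graded $B$-module equipped with an additional commuting action of the graded Hopf algebra $H := \Bbbk[\dif]/(\dif^p)$. For any finitely generated projective graded $B$-module $P$ (endowed with the zero $p$-differential) and any finite-dimensional $p$-complex $V$, the exterior tensor product $P\otimes V$ is a compact object of $\mc{D}^c(B)$, and its class in $K_0(B)$ depends bi-additively on $[P]\in K_0^\prime(B)$ and $[V]\in \mathbb{O}_p$. The internal grading shift $v$ acts diagonally on $P\otimes V$, so one obtains a well-defined $\mathbb{O}_p$-linear map
\[
\Phi \colon K_0^\prime(B)\otimes_{\Z[v^{\pm 1}]}\mathbb{O}_p \lra K_0(B),\qquad [P]\otimes [V]\longmapsto [P\otimes V],
\]
and the remaining task is to show that $\Phi$ is an isomorphism. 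Surjectivity should follow from the finite homological dimension of $B$, which allows every compact object in the derived category of graded $B$-modules to be quasi-represented by a bounded complex of finitely generated projectives, combined with a Krull-Schmidt decomposition of any $p$-DG module into its slash cohomology plus a contractible $p$-complex summand (in the spirit of the proof of Lemma \ref{lemma-Kunneth}). For injectivity, I would construct an inverse via a ``slash Euler characteristic'' map $[M]\mapsto \sum_{k=0}^{p-2} [\mH_{/k}(M)]\otimes [H/(\dif^{k+1})]$, using that slash cohomology sends distinguished triangles to long exact sequences of graded $B$-modules.

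The step I expect to be the main obstacle is verifying that this slash-Euler characteristic inverse actually descends to a well-defined map at the level of Grothendieck groups. The Krull-Schmidt splitting invoked for surjectivity is canonical only as a decomposition of underlying $p$-complexes, and compatible splittings of the $B$-action generally require non-canonical choices. The artinian and finite-homological-dimension hypotheses on $B$ are precisely what make this compatibility achievable: together they guarantee that every compact object admits a finite bounded presentation in terms of finitely generated projective $B$-modules, after which the remaining verification becomes a routine matching of the $\Z[v^{\pm 1}]$-actions on the two sides of the claimed isomorphism.
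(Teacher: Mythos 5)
Your first step coincides with the paper's: slash-zero formality plus Lemma \ref{lemma-formal-derived-equivalence} reduces everything to computing $K_0(\mc{D}^c(B))$ for $B=\mH_/(A)$ with the zero differential. For the second step, however, the paper simply invokes \cite[Proposition 9.10]{QYHopf}, which is exactly the statement $K_0(\mc{D}^c(B))\cong K_0^\prime(B)\otimes_{\Z[v^{\pm1}]}\mathbb{O}_p$ for a graded artinian $B$ of finite homological dimension with trivial differential; you are attempting to reprove that proposition, and your sketch of it has a concrete flaw.

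The proposed inverse $[M]\mapsto \sum_{k=0}^{p-2}[\mH_{/k}(M)]\otimes[H/(\dif^{k+1})]$ is not an inverse of $\Phi$, even before one worries about additivity on triangles. Test it on $B=\Bbbk$ and $M=H/(\dif^{j+1})$ with $1\leq j\leq p-2$: one computes $\mH_{/k}(M)=\Bbbk\cdot\dif^{\,j-k}$ for $0\leq k\leq j$ (sitting in internal degree $2(j-k)$) and $\mH_{/k}(M)=0$ otherwise, so your formula returns
\[
\sum_{k=0}^{j}q^{2(j-k)}\sum_{i=0}^{k}q^{2i}=\sum_{m=0}^{j}(m+1)q^{2m},
\]
whereas $[M]=\sum_{m=0}^{j}q^{2m}$; already for $j=1$ these disagree. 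Moreover, the well-definedness issue you flag is not resolved by the artinian and finite-homological-dimension hypotheses: those hypotheses are used (in \cite[Proposition 9.10]{QYHopf}) to show that every compact object is built from finitely many cells $P\otimes V$, i.e.\ they give \emph{surjectivity} of $\Phi$ and the identification of $K_0^\prime$ of projectives with $G_0$; they say nothing about the connecting maps in the slash-cohomology long exact sequences, which weave through different slash degrees $k$ and are precisely what obstructs a naive Euler-characteristic inverse. So either the weights $[H/(\dif^{k+1})]$ must be replaced by ones compatible with those connecting maps, or (as the paper does) one should defer the whole second step to the cited result rather than rederive it.
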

\begin{proof}
Lemma \ref{lemma-formal-derived-equivalence} shows that we can compute $K_0(\mc{D}^c(A))$ from $K_0(\mc{D}^c(\mH_/(A)))$. When $\mH_/(A)$ is graded artinian and has finite homological dimension, the isomorphism
$$K_0(\mc{D}^c(\mH_/(A)))\cong K_0^\prime (\mH_/(A))\otimes_{\Z[v^{\pm 1}]}\mathbb{O}_p$$
follows from \cite[Proposition 9.10]{QYHopf}.
\end{proof}

\subsection{Symmetric polynomials and slash cohomology} In this Section, we recall the computation of the slash cohomology ring of $p$-DG symmetric polynomials done in \cite{EQ1,EQ2}, and we will deduce some useful consequences for later use. 

\paragraph{$p$-DG symmetric polynomials.} The ring of symmetric polynomials in $n$-variables $\sym_n$, which we identify as a subalgebra in the algebra of $n$-variable polynomials
$\pol_n:=\Bbbk[x_1,\dots, x_n]$, consists of polynomials invariant under the natural permutation action of $S_n$ on $\pol_n$. Each variable has degree $\mathrm{deg}(x_i)=2$.

Equip $\pol_n$ with the $p$-differential defined on the generators by $\dif(x_i)=x_i^2$, $i=1,\dots, n$, and extend the differential to $\pol_n$ by the Leibnitz rule. It is easy to see that, when $\mathrm{char}(\Bbbk)=p>0$, $(\pol_n,\dif)$ is a $p$-DG algebra. Furthermore, $\sym_n\subset \pol_n$ is preserved under the differential. One can also see this from the following differential action on the \emph{elementary symmetric polynomials} (see \cite[Lemma 3.1]{EQ1})
\begin{equation}\label{eqn-dif-elementary-function}
\dif(e_r)=e_1e_r-(r+1)e_{r+1} \ \ (1\leq r \leq n-1), \quad \quad \quad \dif(e_n)=e_1e_n,
\end{equation}
where, as usual, 
\[
e_k=e_k(x_1,\dots, x_n):= \sum_{1\leq i_1<\dots < i_k \leq n}x_{i_1}\cdots x_{i_k}.
\]
When talking about the \emph{$p$-DG algebra $\sym_n$}, we will always mean that $\sym_n$ is equipped with this differential \eqref{eqn-dif-elementary-function}. Likewise, one has the differential action on the \emph{$k$th complete symmmetric polynomials}
\[
h_k=h_k(x_1,\dots, x_n):= \sum_{1\leq i_1\leq \dots \leq i_k \leq n}x_{i_1}\cdots x_{i_k},
\]
for any $k\in \N$, given by
\begin{equation}
\label{eqn-dif-complete-function}
\dif(h_r)=-h_1h_r+(r+1)h_{r+1}.
\end{equation}

There is a one-parameter family of rank-one $p$-DG modules $\s_n(a)$, parametrized by $a\in \F_p$, over the $p$-DG algebra $(\sym_n,\dif)$. As $\sym_n$-modules, $\s_n(a)$ is freely generated by a degree-zero generator $\s_n(a):=\sym_n\cdot v_a$, with the differential acting on the module by
\begin{equation}\label{eqn-dif-rank-one-mod}
\dif(fv_a):=\dif(f)v_a+ae_1fv_a,
\end{equation}
where $f\in \sym_n$. When $a=0$, we also simplify the notation by $\s_n:=\s_n(0)$.

\paragraph{Slash-zero formality.} We recall the following result about the slash cohomology of $\sym_n$ and the rank-one module $\s_n(a)$.

\begin{lem}\label{lemma-slash-homology-rank-one-mod}
Let $n=kp+r$ be a natural number with $k\in \N$ and $0\leq r \leq p-1$. 
\begin{itemize}
\item[(i)]The $p$-DG algebra $\sym_n$ is slash-zero formal. Moreover, the natural inclusion map
\[
\Bbbk[e_p^p,\dots, e_{kp}^p]\lra \sym_n,
\]
where $\Bbbk[e_p^p,\dots, e_{kp}^p]$ has the zero differential, is a quasi-isomorphism of $p$-DG algebras. Likewise, the slash-zero formality is induced from the inclusion
\[
\Bbbk[h_p^p,\dots, h_{kp}^p]\lra \sym_n.
\]
\item[(ii)] When $a \in \{1,\dots, r\}$, the rank-one module $p$-DG module $\s_n(a)$ is acyclic: $\mH_/(\s_n(a))=0$.
\end{itemize}
\end{lem}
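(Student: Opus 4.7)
First, I verify that $\iota:\Bbbk[e_p^p,\ldots,e_{kp}^p]\hookrightarrow\sym_n$ (with zero differential on the source) is a $p$-DG algebra homomorphism. By the Leibniz rule applied to \eqref{eqn-dif-elementary-function},
\[
\dif(e_{jp}^p) = p\,e_{jp}^{p-1}\dif(e_{jp})=0
\]
in characteristic $p$, and the same holds for $h_{jp}^p$ using \eqref{eqn-dif-complete-function}. So the content of part (i) is the quasi-isomorphism statement $\mH_/(\sym_n)\cong\Bbbk[e_p^p,\ldots,e_{kp}^p]$, with cohomology concentrated in $\mH_{/0}$.

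My plan for part (i) is a Koszul-type reduction using \eqref{eqn-dif-elementary-function}. The key local observation is that when $r+1\not\equiv 0\pmod p$ and $r<n$, the formula $\dif(e_r)=e_1 e_r - (r+1) e_{r+1}$ inverts to $e_{r+1}=(r+1)^{-1}(e_1 e_r - \dif(e_r))$, so modulo the image of $\dif$ one has $e_{r+1}\equiv(r+1)^{-1}e_1 e_r$. Iterating, $e_r\equiv(r!)^{-1}e_1^r$ for $r<p$; the recursion breaks at $r=p-1$ since $p\equiv 0\pmod p$, leaving $e_p$ as a ``new'' generator. The same pattern repeats between multiples of $p$: $e_{jp+s}\equiv(s!)^{-1}e_1^s e_{jp}$ for $0\leq s<p$, with breakdown at $s=p-1$ leaving $e_{(j+1)p}$ new. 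Inside each one-variable subalgebra $\Bbbk[e_{jp}]$, a further computation analogous to Example~\ref{eg-easy-formal-p-dga}(iii) shows each $e_{jp}^t$ for $1\leq t\leq p-1$ fits into a contractible $p$-block, leaving only $e_{jp}^p$ as a genuine slash-cocycle. To rigorously lift these ``modulo $\dif$-image'' reductions to the slash cohomology level (a quotient by $\dif^{p-1}$-image), I would employ a spectral sequence for a bigraded filtration on $\sym_n$ (by total degree and by the index of the largest non-$p$-divisible $e_r$ appearing in each monomial). Lemma~\ref{lemma-Kunneth} applied to the associated graded at each stage splits off contractible tensor factors from the $\Bbbk[e_{jp}^p]$-type surviving factors, yielding $\mH_/(\sym_n)=\Bbbk[e_p^p,\ldots,e_{kp}^p]$. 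The argument for $h$-polynomials is identical using \eqref{eqn-dif-complete-function}.

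For part (ii), my plan is to combine the Koszul reduction of (i) with the perturbation by the twist $\dif_a(fv_a)=\dif(f)v_a+ae_1fv_a$, inducting on $k$. The key observation is that the twist by $ae_1$ destroys the surviving cocycles found in (i): on a representative $e_{jp}^p v_a$, one gets $\dif_a(e_{jp}^p v_a)=ae_1e_{jp}^p v_a\neq 0$ whenever $a\not\equiv 0\pmod p$. More systematically, after applying the (i)-reduction to $\s_n(a)$, the residual $p$-complex decomposes as a direct sum of twisted pieces isomorphic to $\s_r(a)$-analogues, whose acyclicity reduces to the base case $n=r<p$, resolved by a direct block decomposition generalizing the $\s_1(1)$ computation. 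The condition $a\in\{1,\ldots,r\}$ reflects the $r=n\bmod p$ ``spare'' non-$p$-divisible generators in $\sym_n$ available to absorb the twist; for other values of $a$ this absorption fails at a specific inductive step. The main obstacle throughout is the rigorous global organization of the Koszul reduction in (i)---the generators $\{e_r\}$ are not $p$-DG independent, and the naive degree filtration does not split the associated graded cleanly; the careful choice of bigrading (or spectral sequence) ensuring Lemma~\ref{lemma-Kunneth} applies at each stage constitutes the technical heart of the argument.
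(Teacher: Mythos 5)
The paper offers no argument here at all --- its ``proof'' is the citation \cite[Proposition 2.16]{EQ2} --- so there is nothing in-paper to match your proposal against; it must be judged on its own. Your local computations are correct: $\dif(e_{jp}^p)=p\,e_{jp}^{p-1}\dif(e_{jp})=0$ in characteristic $p$, and the recursion $\dif(e_r)=e_1e_r-(r+1)e_{r+1}$ does invert away from $r+1\equiv 0 \pmod p$. But the proposal has two genuine gaps.

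For part (i), the reduction ``$e_{r+1}\equiv(r+1)^{-1}e_1e_r$ modulo the image of $\dif$'' is an identity in $\mH_{/p-2}$, whereas the statement to be proved is that the cohomology is concentrated in $\mH_{/0}=\ker(\dif)/\mathrm{im}(\dif^{p-1})$, where only $\dif^{p-1}$-coboundaries may be discarded. You acknowledge this and defer the repair to ``a spectral sequence for a bigraded filtration,'' but that device is never constructed and is not routine for $p$-complexes: a filtration whose associated graded is a sum of contractibles and trivial pieces does not automatically have the same slash cohomology as its associated graded (one must control extensions among the length-$<p$ indecomposable strings), and the specific filtration you name --- by the largest non-$p$-divisible index of an $e_r$ occurring --- is not obviously $\dif$-stable, since $\dif(e_r)$ has one term preserving that index and one term raising it. As you yourself flag this as the ``technical heart,'' part (i) is in effect unproved. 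A workable alternative is to induct on $n$ via the termwise-split short exact sequences of $p$-complexes $0\to \s_n(a+1)\{2n\}\to \s_n(a)\to \s_{n-1}(a)\to 0$ given by multiplication by $e_n$ (note $\dif(e_nf v_a)=e_n(\dif(f)+(a+1)e_1f)v_a$) and the quotient $e_n\mapsto 0$, proving (i) and (ii) simultaneously; or to decompose $\sym_n$ in the Schur basis via \eqref{eqn-dif-on-Schur} and argue with $p$-Lima partitions as in Lemma \ref{lem-p-Lima}.

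For part (ii), the mechanism you propose --- the twist kills the surviving cocycles ``whenever $a\not\equiv 0\pmod p$'' --- proves too much and is therefore not the right mechanism. The lemma asserts acyclicity only for $a\in\{1,\dots,r\}$, and this restriction is sharp: for $n=1$ and $p\geq 3$ the module $\s_1(2)$ is \emph{not} acyclic, since $\dif(e_1^mv_2)=(m+2)e_1^{m+1}v_2$ produces a bottom string $v_2,e_1v_2,\dots,e_1^{p-2}v_2$ of length $p-1<p$ that survives in slash cohomology. Your sketch contains no step at which the hypothesis $a\leq r$ actually enters; remarking that ``for other values of $a$ this absorption fails at a specific inductive step'' is not an argument that it succeeds when $a\leq r$. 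Both halves of the proposal therefore remain at the level of a plausible outline rather than a proof.
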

\begin{proof}
See \cite[Proposition 2.16]{EQ2}.
\end{proof}

We remark that the involution
\begin{equation}\label{eqn-sym-involution}
\omega:\sym_n\lra \sym_n,\quad \quad e_k\mapsto (-1)^kh_k,~(1\leq k \leq n)
\end{equation}
intertwines the differential defined on $\sym_n$ according to equation \eqref{eqn-dif-elementary-function} and \eqref{eqn-dif-complete-function}. The two inclusions of slash-zero cocycles are thus exchanged under the involution. For this reason, we will only formulate results concerning the slash cohomology of $\sym_n$ (or $\sym:=\lim_{n\to \infty}\sym_n$) in terms of the elementary symmetric polynomials (functions).

For the next result, let us embed $\sym_{(a+b)p}$ and $\sym_{ap}\otimes \sym_{bp}$ inside $\Bbbk[\underline{x}]\otimes \Bbbk[\underline{x^\prime}]$, where $\underline{x}$ stands for the set of variables $\{x_1,x_2,\dots, x_{ap}\}$ and likewise $\underline{x}^\prime$ stands for $\{x^\prime_1, x^\prime_2,\dots, x^\prime_{bp}\}$. By Lemma \ref{lemma-slash-homology-rank-one-mod}, we identify
\begin{equation}
\mH_{/}(\sym_{(a+b)p})\cong \Bbbk[e_p^p(\underline{x},\underline{x}^\prime),\dots, e_{(a+b)p}^p(\underline{x},\underline{x}^\prime)],
\end{equation}
and
\begin{equation}
\mH_{/}(\sym_{ap})\otimes \mH_{/}(\sym_{bp})\cong \Bbbk[e_p^p(\underline{x}),\dots, e_{kp}^p(\underline{x})]\otimes
\Bbbk[e_p^p(\underline{x}^\prime),\dots, e_{lp}^p(\underline{x}^\prime)],
\end{equation}

\begin{cor}\label{cor-slash-homology-inclusion}
Let $a,b\in \N$ be integers. The natural inclusion of $p$-DG algebras
\[
\iota: \sym_{(a+b)p} \lra \sym_{ap}\otimes \sym_{bp}
\]
induces an inclusion of slash cohomology $p$-DG algebras with zero differentials
\begin{eqnarray*}
\iota_/:  \Bbbk[e_p^p(\underline{x},\underline{x}^\prime),\dots, e_{(a+b)p}^p(\underline{x},\underline{x}^\prime)] & \lra & \Bbbk[e_p^p(\underline{x}),\dots, e_{ap}^p(\underline{x})]\otimes
\Bbbk[e_p^p(\underline{x}^\prime),\dots, e_{bp}^p(\underline{x}^\prime)]\\
e_{rp}^p(\underline{x},\underline{x}^\prime) & \mapsto & \sum_{i=0}^{\mathrm{min}(r,a,b)} e_{ip}^p (\underline{x}) \otimes e_{(r-i)p}^p (\underline{x}^\prime) ,
\end{eqnarray*}
where $r\in \{1,\dots, a+b\}$. In particular, the induced map on slash cohomology is an embedding of polynomial algebras.
\end{cor}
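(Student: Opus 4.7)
The plan is to reduce the computation to the classical comultiplication identity for elementary symmetric polynomials combined with the characteristic-$p$ Frobenius.

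First, I would verify that $\iota$ is a $p$-DG algebra map. Both $\sym_{(a+b)p}$ and $\sym_{ap}\otimes\sym_{bp}$ are invariant subalgebras of $\Bbbk[\underline{x},\underline{x}^\prime]$ (under $S_{(a+b)p}$ and $S_{ap}\times S_{bp}$ respectively), and both inherit the common $p$-differential determined by $\dif(x_i)=x_i^2$ and $\dif(x^\prime_j)=(x^\prime_j)^2$. Hence $\iota$ descends to slash cohomology, which by Lemma \ref{lemma-slash-homology-rank-one-mod}(i) carries the zero differential on both sides and is a polynomial ring in the displayed generators. So it is enough to compute the image of each generator $e_{rp}^p(\underline{x},\underline{x}^\prime)$.

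Second, I would apply the classical comultiplication identity
\[
e_{rp}(\underline{x},\underline{x}^\prime) = \sum_{i+j=rp} e_i(\underline{x})\,e_j(\underline{x}^\prime)
\]
in $\Bbbk[\underline{x},\underline{x}^\prime]$, then raise both sides to the $p$th power. Since $\chr \Bbbk = p$, the Frobenius distributes over the sum and yields
\[
\iota(e_{rp}^p(\underline{x},\underline{x}^\prime)) = \sum_{i+j=rp} e_i(\underline{x})^p \otimes e_j(\underline{x}^\prime)^p.
\]

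Third, I would pass to slash cohomology. By Lemma \ref{lemma-slash-homology-rank-one-mod}(i), the class of $e_i(\underline{x})^p$ in $\mH_/(\sym_{ap})$ is zero unless $p\mid i$ (and vanishes as a polynomial altogether when $i>ap$); the analogous statement holds on the $\underline{x}^\prime$-side. Only terms with $i=sp$ and $j=(r-s)p$ survive, and reindexing over the admissible $s$ gives the stated formula. For the injectivity claim, both source and target of $\iota_/$ are polynomial algebras of Krull dimension $a+b$, so I would conclude by a triangularity argument: as $r$ runs over $\{1,\dots,a+b\}$, the image of $e_{rp}^p(\underline{x},\underline{x}^\prime)$ contains a distinguished ``pure'' term (namely $e_{rp}^p(\underline{x})\otimes 1$ when $r\leq a$, and $1\otimes e_{rp}^p(\underline{x}^\prime)$ otherwise) not occurring in the image of any other $e_{r^\prime p}^p$, forcing algebraic independence of the images.

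The main obstacle I foresee is the vanishing of $e_i(\underline{x})^p$ in $\mH_/(\sym_{ap})$ when $p\nmid i$: although formally contained in Lemma \ref{lemma-slash-homology-rank-one-mod}(i), making it effective requires exhibiting $e_i(\underline{x})^p$ as an image of $\dif^{p-1}$, which can be extracted from the inductive argument underlying the proof of that lemma in \cite{EQ2} together with the differential formula \eqref{eqn-dif-elementary-function}.
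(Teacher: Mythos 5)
Your proof is correct and follows the paper's argument exactly: the split-variable comultiplication identity for $e_{rp}(\underline{x},\underline{x}^\prime)$, the Frobenius to raise it to $p$th powers term by term, and Lemma \ref{lemma-slash-homology-rank-one-mod} to kill the factors $e_j^p$ with $p\nmid j$ in slash cohomology. One small caveat on your added triangularity argument for injectivity: when $r>\max(a,b)$ neither pure term $e_{rp}^p(\underline{x})\otimes 1$ nor $1\otimes e_{rp}^p(\underline{x}^\prime)$ occurs (e.g.\ $a=b=1$, $r=2$, where the image is just $e_p^p(\underline{x})\otimes e_p^p(\underline{x}^\prime)$), so the claimed distinguished monomial need not exist; the embedding is instead the classical algebraic independence of $\sum_{i+j=r}e_i\otimes e_j$ in $\Bbbk[e_1,\dots,e_a]\otimes\Bbbk[e_1^\prime,\dots,e_b^\prime]$ (equivalently, freeness of $\sym_{ap}\otimes\sym_{bp}$ over $\sym_{(a+b)p}$), transported through the polynomial-generator identifications.
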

\begin{proof}
Elementary symmetric functions, when the variables are split into two independent sets $\underline{x}$ and $\underline{x}^\prime$, satisfy
\[
e_i(\underline{x},\underline{x}^\prime)=\sum_{j=0}^{\mathrm{min}(i,ap,bp)}e_{j}(\underline{x})\cdot e_{i-j}(\underline{x}^\prime).
\]
Taking $i=rp$ and raising the above equation to the $p$th power, we obtain
\[
e_{rp}^p(\underline{x},\underline{x}^\prime)=\sum_{j=0}^{\mathrm{min}(r,a,b)}e_{j}^p(\underline{x})\cdot e^p_{rp-j}(\underline{x}^\prime),
\]
where we have used that, in characteristic $p>0$, the $p$th power map (Frobenius homomorphism) preserves the abelian group structure of $\Bbbk$-algebras. If $p$ does not divide $j$, Lemma \ref{lemma-slash-homology-rank-one-mod} shows that the $0$-cocycle $e_{j}^p(\underline{x})$ is equal to zero in the slash cohomology. The result thus follows.
\end{proof}
 
\begin{rem}\label{rmk-undivisible-splitting}
Notice that, if we split the set of $(k+l)p$ variables into two groups such that neither is divisible by $p$, then the induced map on slash cohomology will not be injective. For instance, take $a=b=1$, and consider the inclusion
\[
\iota:\sym_{2p}(\underline{x},\underline{x}^\prime)\subset \sym_{p-1}(\underline{x})\otimes \sym_{p+1}(\underline{x}^\prime).
\] 
Using Lemma \ref{lemma-slash-homology-rank-one-mod} again, we see that, on the level of slash cohomology, the induced map
\[
\iota_{/}: \Bbbk[e_p^p(\underline{x},\underline{x}^\prime),e_{2p}^p(\underline{x},\underline{x}^\prime)]\lra \Bbbk\otimes \Bbbk[e_p^p(\underline{x}^\prime)]
\]
sends $e_{2p}^{p}(\underline{x},\underline{x}^\prime)$ to zero on the right hand side, and the result of Corollary \ref{cor-slash-homology-inclusion} fails in this situation.
\end{rem}

Next, for each $1\leq i \leq k$, we abbreviate the set of variables $\underline{x}^{(i)}:=\{x_{(i-1)p+1},\dots, x_{ip}\}$. Then we have an inclusion of $p$-DG algebras
\[
\iota: \sym_{kp}(\underline{x}^{(1)},\dots, \underline{x}^{(k)})\subset \sym_{p}(\underline{x}^{(1)})\otimes\cdots\otimes \sym_p(\underline{x}^{(k)}).
\]
Passing to slash cohomology, we obtain
\begin{equation}
\iota_/:\Bbbk[e_p^p(\underline{x}^{(1)},\dots, \underline{x}^{(k)}),\dots, e_{kp}^p(\underline{x}^{(1)},\dots, \underline{x}^{(k)})]\lra \Bbbk[e_p^p(\underline{x}^{(1)})]\otimes\cdots \otimes \Bbbk[e_p^p(\underline{x}^{(k)})].
\end{equation}
It will be convenient to define the auxiliary variables
\begin{equation}\label{eqn-y-variables}
y_i:=e_{p}^p(\underline{x}^{(i)})=e_p^p(x_{(i-1)p+1},\dots, x_{ip}).
\end{equation}
Clearly $\mathrm{deg}(y_i)=2p^2$.

\begin{cor}\label{cor-slash-cohomology-inclusion-2}
On the level of slash cohomology, the inclusion of $p$-DG algebras
$$\iota: \sym_{kp} \lra (\sym_{p})^{\otimes k}$$
induces an isomorphism of $\mH_/(\sym_{kp})$ onto the space of symmetric polynomials in $y_i$ ($i=1,\dots, k$). 
\end{cor}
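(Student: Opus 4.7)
The plan is to iterate Corollary \ref{cor-slash-homology-inclusion} along a sequence of splittings that peel off one block $\underline{x}^{(j)}$ at a time, compute the image of the polynomial generators of $\mH_/(\sym_{kp})$ along this chain, and finally match these images against a known set of polynomial generators for the symmetric polynomials in the $y_i$.

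First, I would factor $\iota$ as the composition
$$\sym_{kp} \hookrightarrow \sym_p(\underline{x}^{(1)}) \otimes \sym_{(k-1)p}(\underline{x}^{(2)},\dots,\underline{x}^{(k)}) \hookrightarrow \cdots \hookrightarrow \bigotimes_{j=1}^{k} \sym_p(\underline{x}^{(j)}).$$
At every intermediate step, the left factor has exactly $p$ variables and the right factor has a multiple of $p$ variables, so Corollary \ref{cor-slash-homology-inclusion} applies and $\iota_/$ is the composite of the induced maps on slash cohomology. Since $\mH_/(\sym_p(\underline{x}^{(j)})) = \Bbbk[y_j]$ by Lemma \ref{lemma-slash-homology-rank-one-mod}, the index $i$ in the formula of Corollary \ref{cor-slash-homology-inclusion} is forced to lie in $\{0,1\}$ at each stage (contributing the class $1 = e_0^p$ and $y_j = e_p^p(\underline{x}^{(j)})$ respectively). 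A straightforward induction on $k$ then yields
$$\iota_/\bigl(e_{rp}^p(\underline{x}^{(1)},\dots,\underline{x}^{(k)})\bigr) = \sum_{\substack{S \subseteq \{1,\dots,k\} \\ |S| = r}} \prod_{j \in S} y_j = e_r(y_1,\dots,y_k).$$

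To conclude, I would invoke Lemma \ref{lemma-slash-homology-rank-one-mod} once more to identify $\mH_/(\sym_{kp})$ with the polynomial algebra $\Bbbk[e_p^p,\dots,e_{kp}^p]$ on $k$ algebraically independent generators, and note that $\sym_k(y_1,\dots,y_k)$ is likewise a polynomial algebra on $k$ algebraically independent generators $e_1(y),\dots,e_k(y)$. Since $\iota_/$ sends the first set of generators bijectively onto the second, it restricts to an isomorphism of $\mH_/(\sym_{kp})$ onto the subalgebra of symmetric polynomials in $y_1,\dots,y_k$ inside $\Bbbk[y_1,\dots,y_k]$.

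The only real work is the iterated application of Corollary \ref{cor-slash-homology-inclusion}, and no significant obstacle is anticipated: the vanishing in Lemma \ref{lemma-slash-homology-rank-one-mod} collapses the sum at each stage to just two terms, making the induction essentially that of expanding a product $\prod_{j=1}^k (1 + y_j t)$ and reading off the coefficient of $t^r$.
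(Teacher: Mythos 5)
Your proposal is correct and follows essentially the same route as the paper: the paper expands $e_{rp}^p(\underline{x}^{(1)},\dots,\underline{x}^{(k)})$ in one step as a sum over compositions $j_1+\dots+j_k=rp$ and kills every term with some $j_a$ not divisible by $p$ via Lemma \ref{lemma-slash-homology-rank-one-mod}, whereas you reach the identical formula $\iota_/(e_{rp}^p)=e_r(y_1,\dots,y_k)$ by iterating the two-block Corollary \ref{cor-slash-homology-inclusion}. The difference is purely organizational, and your closing remark on algebraic independence of the generators just makes explicit what the paper leaves to the reader.
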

\begin{proof}
The proof is similar to that of Corollary \ref{cor-slash-homology-inclusion}. Indeed, for any $e_{rp}^p\in \sym_{kp}$, $1\leq r\leq k$, we have
\[
e_{rp}^p(\underline{x}^{(1)},\dots, \underline{x}^{(k)}) = \sum_{j_1+\dots + j_k=rp}e_{j_1}^p(\underline{x}^{(1)})\cdots e_{j_k}^p(\underline{x}^{(k)}).
\]
Using Lemma \ref{lemma-slash-homology-rank-one-mod}, each $e_{j_a}^p(\underline{x}^{(a)})=0$, $a=1,\dots, k$, in the slash cohomology ring of $\sym_{p}(\underline{x}^{(a)})$ unless $p$ divides $j_a$. Since $0\leq j_a\leq p$, the remaining terms are equal to  $e_r(y_1,\dots, y_k)$. The claim follows.
\end{proof}

\begin{rem}\label{rmk-infinite-version}By passing to the $n\to\infty$ version of Lemma \ref{lemma-slash-homology-rank-one-mod} (see \cite[Proposition 3.8]{EQ1}), one sees that, the slash cohomology is isomorphic to
\[
\mH_/(\sym)\cong \Bbbk[e_p^p,e_{2p}^p,e_{3p}^p,\dots] \cong \Bbbk[h_p^p,h_{2p}^p,h_{3p}^p,\dots],
\]
and the coproduct on the Hopf algebra $\sym$ induces an coassociative coproduct on $\mH_/(\sym)$:
\[
\Delta_/:\mH_/(\sym)\lra \mH_/(\sym)\otimes \mH_/(\sym),\quad
e_{kp}^p\mapsto \sum_{a+b=k}e_{ap}^p\otimes e_{bp}^p,\quad \quad
h_{kp}^p\mapsto \sum_{a+b=k}h_{ap}^p\otimes h_{bp}^p.
\]
The antipode $\omega:\sym\lra \sym$, $e_i\mapsto (-1)^{i}h_i$ gives rise to
\[
\omega_/:\mH_/(\sym) \lra \mH_/(\sym),\quad \quad e_{kp}^p\mapsto (-1)^{kp^2}h_{kp}^p=(-1)^{k}h_{kp}^p.
\]
In this way, the slash cohomology ring $\mH_/(\sym)$ is just isomorphic to another copy of symmetric functions $\sym^\prime\cong \Bbbk[e_1^\prime,e_2^\prime,e_3^\prime,\dots]$ (with the zero differential), but the degree is adjusted to be $\mathrm{deg}(e_i^\prime)=2ip^2$. We have a homogeneous ``thickening map'' $\Theta_0$:
\begin{equation}
\Theta_0:\sym^\prime \lra \mH_/(\sym),\quad \quad
\Theta_0(e_i^\prime):=e_{ip}^p,
\end{equation}
which is a quasi-isomorphism of $p$-DG Hopf algebras.
\end{rem}

\section{Grassmannian bimodules and unoriented thick calculus}\label{sec-Gr-bim}
The $p$-DG (thick) nilHecke algebra arises as the $p$-DG endomorphism algebra of some Grassmannian bimodules \cite[Section 3.2]{EQ2}, and categorifies the nilpotent part of quantum $\mf{sl}_2$ generated by all divided powers of ${E}$ at a prime root of unity. In this Section, we recall the construction, and then we investigate how ``one-half'' of the quantum Frobenius map is categorified by passing to the full subcategory monoidally generated by $E^{(p)}$.

\subsection{A quantum Frobenius map}\label{sec-half-sl2}
Following Lusztig \cite{Lus4}, we will define the upper nilpotent part of the quantum $\mathfrak{sl}_2$ at a generic value $v$, which is denoted for short by $U_v^+$, as the $\Z[v^{\pm 1}]$-algebra spanned by
\begin{equation}\label{eqn-half-sl2-generic}
U^+_v:=\bigoplus_{a\in \N}\Z[v^{\pm 1}]\cdot \theta^{(a)}.
\end{equation}
with the multiplication and comultiplication given by
\begin{equation}\label{eqn-half-sl2-generic-multiplication}
m_v: U^+_{v}\otimes_{\Z[v^{\pm 1}]}U^+_{v}\lra  U^+_{v}, \quad m_v(\theta^{(a)}\theta^{(b)})={a+b \brack a}_v \theta^{(a+b)},
\end{equation}
\begin{equation}\label{eqn-half-sl2-generic-comultiplication}
r_v:  U^+_{v}\lra  U^+_{v}\otimes_{\Z[v^{\pm 1}]}U^+_{v}, \quad r_v(\theta^{(a)})=\sum_{k=0}^{a}v^{k(k-a)}\theta^{(k)}\otimes \theta^{(a-k)}.
\end{equation}
Here we have used the quantum integer $[n]_v:=\frac{v^{n}-v^{-n}}{v-v^{-1}}=\sum_{i=0}^{n-1} v^{n -1-2i}$, and the quantum binomial coefficient
\[
{a+b \brack a}_v:= \frac{[a+b]_v!}{[a]_v![b]_v!}.
\]

Next we recall the quantum group $U^+_{\mathbb{O}_p}$ at a prime root of unity as a twisted bialgebra and its quantum Frobenius map. This is the twisted bialgebra obtained from 
$U^+_v$ by the canonical base change map $\Z[v^{\pm 1}]\lra \mathbb{O}_p$, $v\mapsto q$. More specifically, as an algebra, $U^+_{\mathbb{O}_p}$ is a free $\mathbb{O}_p$-module spanned by $E^{(a)}$, $a\in \N$,
\[
U^+_{\mathbb{O}_P}=\bigoplus_{a\in \N} \mathbb{O}_p\cdot E^{(a)},
\]
equipped with the multiplication 
\begin{equation}
m: U^+_{\mathbb{O}_p}\otimes_{\mathbb{O}_p}U^+_{\mathbb{O}_p}\lra  U^+_{\mathbb{O}_p}, \quad m(E^{(a)}E^{(b)})={a+b \brack a}_{\mathbb{O}_p} E^{(a+b)},
\end{equation}
and the comultiplication 
\begin{equation}
r:  U^+_{\mathbb{O}_p}\lra  U^+_{\mathbb{O}_p}\otimes_{\mathbb{O}_p}U^+_{\mathbb{O}_p}, \quad r(E^{(a)})=\sum_{k=0}^{a}q^{k(k-a)}E^{(k)}\otimes E^{(a-k)}.
\end{equation}
Here the coefficients ${a+b \brack a}_{\mathbb{O}_p}$ and $q^{k(k-a)}$ are evaluated in the ground ring $\mathbb{O}_p$.

Define another base change ring homomorphism
\begin{equation}\label{eqn-base-change-map}
\rho:\Z[v^{\pm 1}]\lra \mathbb{O}_p, \quad v\mapsto q^{p^2}=q^p,
\end{equation} 
if $p$ is odd, where we have used that $q^{2p}=1\in \mathbb{O}_p$. When $p=2$, we set $\rho(v)=1$ if $p=2$. 

Under $\rho$, we have, when $p$ is odd,
\begin{equation}
\rho([n]_v)=\rho\left(\frac{v^n-v^{-n}}{v-v^{-1}}\right) = q^{(1-n)p}(1+q^{2p}+\cdots+q^{2p(n-1)})=nq^{(1-n)p},
\end{equation}
 for any $n\in \N$. It follows that 
\begin{equation}\label{eqn-evaluation-binomial-coeff}
\rho({a+b\brack a}_v)=q^{pab}{a+b\choose a}.
\end{equation}
Likewise, when $p=2$, we have $\rho ([n]_v)=n$ and $\rho ({a+b\brack a}_v) = {a+b\choose a}$.

\begin{rem}\label{rem-wrong-grading-choice}
Our (unfortunate) grading choice for the differential to be of degree two was done in order to match the usual representation theoretical conventions. This has the effect that, when $p$ is odd, the ring $\mathbb{O}_p$ is no longer an integral domain. Moreover, in $\mathbb{O}_p$, we have $q^{2p}=1$ and $q^p\neq -1$. On the other hand, there are two canonical ring homomorphisms
\[
\mathbb{\varrho}_1: \mathbb{O}_p\lra \mathcal{O}_{2p}=\Z[v]/(\Psi_{2p}(v)), \quad \quad
\mathbb{\varrho}_2: \mathbb{O}_p\lra \mathcal{O}_{p}=\Z[v]/(\Psi_{p}(v)),
\]
because $1+v^2+\cdots + v^{2(p-1)}$ factors into the product of the $p$th and $2p$th cyclotomic polynomials $\Psi_{p}(v)$ and $\Psi_{2p}(v)$.
If we perform a further base change along $\varrho_1$, then $q^p=-1$ in the cyclotomic ring $\mathcal{O}_{2p}$, while doing so along $\varrho_2$ makes $q^p=1$.
\end{rem}

When $U^+_v$ is base changed along $\rho$, we see from equations \eqref{eqn-half-sl2-generic},\eqref{eqn-half-sl2-generic-multiplication} and \eqref{eqn-half-sl2-generic-comultiplication} that 
\begin{equation}
U^+_\rho:= U^+_v\otimes_{\Z[v^{\pm 1}],\rho} \mathbb{O}_p
\end{equation}
 is closely related to an $\mathbb{O}_p$-integral form of the classical universal enveloping algebra $U^+(\mathfrak{sl}_2)$. We will denote the generator $\theta^{(a)}\otimes_{\rho} 1$ by $\mathsf{E}^{(a)}$, so that
\begin{equation}
U^+_{\rho}\cong 
\bigoplus_{a\in \N}\mathbb{O}_p \cdot \mathsf{E}^{(a)},
\end{equation}
with the multiplication structure
\begin{equation}
\mathsf{E}^{(a)}\mathsf{E^{(b)}}=
\left\{
\begin{array}{cc}
q^{pab}{a+b \choose a} \mathsf{E}^{(a+b)} & p\neq 2,\\
& \\
{a+b \choose a} \mathsf{E}^{(a+b)} & p=2.
\end{array}
\right.
\end{equation}
This equation tells us that, if we use $\varrho_2$ instead of $\rho$ as in Remark \ref{rem-wrong-grading-choice}, then $q^p=1$, and we recover the usual universal enveloping algebra $U^+(\mf{sl}_2)$ over $\mathcal{O}_p$ with divided power generators. 

We now recall the quantum Frobenius map for the upper nilpotent part of $\mf{sl}_2$ following \cite[Chapter 35]{Lus4}, which is adapted to our setting over $\mathbb{O}_p$.

\begin{defn}\label{def-quantum-Frob-half} The \emph{quantum Frobenius map} is the $\mathbb{O}_p$-algebra homomorphism
\[
\mathrm{Fr}: U^+_{\mathbb{O}_p} \lra U^{+}_\rho, \quad E^{(a)}\mapsto
\left\{
\begin{array}{ll}
\mathsf{E}^{(a/p)} & \mathrm{if}~p~|~a,\\
0 & \mathrm{if}~p \nmid a.
\end{array}
\right.
\]
\end{defn}

\subsection{Grassmannian bimodules}
Let $(a,b)\in \N^2$ be a pair of natural numbers, and consider the $GL_{a+b}$-equivariant cohomology ring of $\mathrm{Gr}(a,a+b)$:
\[
\mH^*_{GL(a+b)}(\mathrm{Gr}(a,a+b),\Bbbk)\cong \sym_a\otimes_\Bbbk \sym_b.
\]  
We will usually just write the right hand side as $\sym_{a,b}$ for short. Each $\sym_a$, $\sym_b$ will be equipped with the $p$-DG structure defined by \eqref{eqn-dif-elementary-function}. When necessary, we will identify elements in $\sym_a$ as symmetric polynomials in the set of variables $\underline{x}=\{x_1,\dots, x_a\}$ while $\sym_b$ as in $\underline{x}^\prime=\{x_1^\prime,\dots, x_b^\prime\}$. Under the differential, $\sym_a\otimes \sym_b$ is equipped with the tensor product $p$-DG algebra structure: for any $\pi_1(\underline{x})\in \sym_a(\underline{x})$ and $\pi_2(\underline{x}^\prime)\in \sym_b(\underline{x}^\prime)$, we have 
\begin{equation}
\dif(\pi_1(\underline{x})\otimes \pi_2(\underline{x}^\prime))=\dif(\pi_1(\underline{x}))\otimes\pi_2(\underline{x}^\prime)+\pi_1(\underline{x})\otimes \dif(\pi_2(\underline{x}^\prime)).
\end{equation}
Furthermore, $\sym_{a,b}$ contains $\sym_{a+b}$ canonically as a $p$-DG subalgebra.

\begin{defn}\label{def-Grassmannian-module}
Let $(a,b)\in \N^2$ be a pair of natural numbers. The rank-one $p$-DG module 
$$\s_{a,b}=\sym_{a,b}\cdot v_{a,b}$$
is equipped with the differential action on the generator given by
\[
\dif(v_{a,b})=-a e_1(\underline{x}^\prime)v_{a,b}.
\]
The degree\footnote{
The degree of the generator is only essential for Lemma \ref{lem-dif-basis-Grassmann-module} and Theorem \ref{thm-weak-categorification-half-sl2}, where it is needed to make some quantum binomial coefficients in $v$ symmetric with respect to $v\mapsto v^{-1}$.} of the module generator $v_{a,b}$ is fixed to be $-ab$. More generally, given any sequence of natural numbers $\underline{a}:=(a_1,\dots, a_r)\in \N^{r}$, define
\[
\s_{a_1, a_2,\dots, a_r}:=(\sym_{a_1}\otimes \sym_{a_2}\otimes\cdots \otimes \sym_{a_{r}})\cdot v_{\underline{a}}
\]
with the differential action determined on the generator by
\[
\dif(v_{\underline{a}}):=\sum_{i=2}^r -(a_1+\dots+ a_{i-1})e_1(x_{a_1+\dots +a_{i-1}+1},\dots, x_{a_1+\dots +a_{i-1}+a_i})v_{\underline{a}},
\]
and extended to the entire module via the Leibniz rule. The degree of $v_{\underline{a}}$ is taken to be 
$$\mathrm{deg}(v_{\underline{a}}):=-\sum_{i\neq j}a_ia_j.$$
\end{defn}

Let us also fix some standard combinatorial notation. For a pair of natural numbers $(a,b)$, denote by $P(a,b)$ the set of Young diagrams/partitions that fit into an $a\times b$ rectangle. When $b\to \infty$, we will just abbreviate $P(a):=P(a,\infty)$. A partition $\lambda=(\lambda_1\geq \lambda_2\geq \dots \geq \lambda_a\geq 0)\in P(a)$ gives rise to a symmetric polynomial in $a$ variables known as the \emph{Schur polynomial} $\pi_{\lambda}(\underline{x})$. We recall the following useful differential action of $\dif$ on $\pi_\lambda\in \sym_a$, which generalizes the actions \eqref{eqn-dif-elementary-function} and \eqref{eqn-dif-complete-function}:
\begin{equation}\label{eqn-dif-on-Schur}
\dif(\pi_\lambda) =\sum_{\mu=\lambda+\square}\mathrm{C}(\square)\pi_{\mu}.
\end{equation}
Here $\mu=\lambda+\square$ stands for a Young diagram in $P(a)$ that is obtained from $\lambda$ by adding a single box, and $\mathrm{C}(\square)$ stands for the content number of the box added (i.e., the column number of the box minus its row number). See \cite[Lemma 2.4]{EQ2} for the proof of \eqref{eqn-dif-on-Schur}.

\begin{lem}\label{lem-dif-basis-Grassmann-module}
\begin{enumerate}
\item[(i)] Considered as a left $p$-DG module over $\sym_{a+b}$, $\s_{a,b}$ is compact cofibrant of graded rank ${a+b\brack b}_v$. A $\dif$-stable $\sym_{a+b}$-basis for the module is given by
\[
\left\{
(1\otimes \pi_{\lambda}(\underline{x}^\prime))v_{a,b}|\lambda \in P(b,a)
\right\}.
\]
\item[(ii)] The dual module $\s_{a,b}^\vee:=\HOM_{\sym_{a+b}}(\s_{a,b},\sym_{a+b})$ is also compact cofibrant with the $\dif$-stable dual basis
\[
\left\{
( \pi_{\lambda}(\underline{x})\otimes 1)v_{a,b}^\vee|\lambda \in P(a,b)
\right\}.
\]
The induced differential on the dual module generator is given by
\[
\dif(v_{a,b}^\vee)=-be_1(\underline{x})v_{a,b}^\vee.
\]
\end{enumerate}
\end{lem}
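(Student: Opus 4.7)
The plan is to verify, by direct computation using the Pieri rule and the differential formula \eqref{eqn-dif-on-Schur}, that the proposed set is a free $\sym_{a+b}$-basis, is $\dif$-stable, and gives rise to a cell-module filtration; part (ii) will then be deduced by duality. For the basis property and graded rank, I would invoke the classical Schubert-cell decomposition of $H^*(\mathrm{Gr}(a, a+b))$, which asserts that $\sym_a \otimes \sym_b$ is free of rank $\binom{a+b}{b}$ over $\sym_{a+b}$ with basis $\{\pi_\lambda(\underline{x}') : \lambda \in P(b,a)\}$. The graded rank ${a+b \brack b}_v$ then follows from the identity $\sum_{\lambda \in P(b,a)} v^{2|\lambda|} = v^{ab}\,{a+b \brack b}_v$ combined with the degree shift $\deg(v_{a,b}) = -ab$.

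For $\dif$-stability, I would apply the Leibniz rule, combining \eqref{eqn-dif-on-Schur} with the Pieri rule $e_1(\underline{x}') \pi_\lambda(\underline{x}') = \sum_{\mu = \lambda + \square} \pi_\mu(\underline{x}')$, to obtain
\[
\dif\bigl((1 \otimes \pi_\lambda(\underline{x}'))\, v_{a,b}\bigr) \;=\; \sum_{\substack{\mu = \lambda + \square \\ \ell(\mu) \le b}} \bigl(C(\square) - a\bigr)\,(1 \otimes \pi_\mu(\underline{x}'))\, v_{a,b}.
\]
The only way a surviving term can escape the rectangle $P(b,a)$ is if the added box sits at position $(1, a+1)$; but then $C(\square) = a$ and the coefficient vanishes. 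Boxes added at row $b+1$ already contribute zero, since $\pi_\mu(\underline{x}') = 0$ in $b$ variables. Hence every nonzero term remains in the proposed basis. Cofibrancy and compactness then follow by filtering $\s_{a,b}$ via $F_k := \bigoplus_{|\lambda| \ge k} \sym_{a+b}\cdot (1 \otimes \pi_\lambda(\underline{x}'))\, v_{a,b}$: this is a decreasing, $\dif$-stable, finite filtration whose successive quotients are direct sums of shifted copies of the regular $p$-DG module $\sym_{a+b}$ (with vanishing induced differential on each generator), exhibiting $\s_{a,b}$ as a cell module with finitely many cells.

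Part (ii) is obtained by a parallel argument after identifying $\s_{a,b}^\vee$ with $\sym_{a,b}$ via a Frobenius-style trace pairing, under which the basis $\{\pi_\lambda(\underline{x}) : \lambda \in P(a,b)\}$ is $\sym_{a+b}$-dual to $\{\pi_\mu(\underline{x}') : \mu \in P(b,a)\}$. The formula $\dif(v_{a,b}^\vee) = -b\, e_1(\underline{x})\, v_{a,b}^\vee$ is then forced by the adjoint relation $\dif(\phi(m)) = (\dif \phi)(m) + \phi(\dif m)$, with the roles of $(a, \underline{x}')$ and $(b, \underline{x})$ interchanged under the duality; the arguments of the previous paragraph apply verbatim to the dual basis. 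The main obstacle I anticipate is the combinatorial bookkeeping in the $\dif$-stability step, and the crux is the observation that the unique boundary-escaping box $(1, a+1)$ has content exactly $a$, whose cancellation with the $-a$ coming from $\dif(v_{a,b})$ is precisely what makes the chosen differential on the generator natural.
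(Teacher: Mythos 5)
Your argument is correct, but note that the paper itself does not prove this lemma: its entire ``proof'' is the citation \cite[Proposition 3.3]{EQ2}. You have therefore supplied a self-contained reconstruction, and it is the right one. The three pillars are all sound: (1) the classical equivariant Schubert basis gives freeness of $\sym_{a,b}$ over $\sym_{a+b}$ with basis $\{\pi_\lambda(\underline{x}')\}_{\lambda\in P(b,a)}$, and the identity $\sum_{\lambda\in P(b,a)}v^{2|\lambda|}=v^{ab}{a+b\brack b}_v$ combined with $\deg(v_{a,b})=-ab$ yields the balanced graded rank; (2) the Leibniz/Pieri computation is exactly the crux --- the only box that could push $\mu$ outside the $b\times a$ rectangle without killing $\pi_\mu(\underline{x}')$ sits at $(1,a+1)$ and has content $a$, cancelling against the $-a$ from $\dif(v_{a,b})$; (3) the decreasing filtration by $|\lambda|$ works because $\dif$ strictly increases $|\lambda|$ on the generator part, so the subquotients are shifted free modules with zero induced differential on generators, exhibiting a finite cell module, hence cofibrant and compact. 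The only place where you are thinner than a complete write-up is part (ii): asserting that $\dif(v_{a,b}^\vee)$ is ``forced'' glosses over the fact that a priori $\dif(v_{a,b}^\vee)=\alpha\, e_1(\underline{x})v_{a,b}^\vee+\beta\, e_1(\underline{x}')v_{a,b}^\vee$ for two undetermined scalars (these are the only degree-$2$ elements of $\sym_{a,b}$), so one must evaluate the adjoint relation $(\dif\phi)(m)=\dif(\phi(m))-\phi(\dif m)$ on two independent basis elements (e.g.\ $\pi_{(a^b)}(\underline{x}')v_{a,b}$ and $\pi_{(a^{b-1},a-1)}(\underline{x}')v_{a,b}$) to pin down $\alpha=-b$, $\beta=0$; this is routine but should be said. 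With that small addition your proof stands on its own and does not need the external reference.
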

\begin{proof}
This is \cite[Proposition 3.3]{EQ2}.
\end{proof}

\begin{defn}\label{def-p-DG-sym} The derived $p$-DG symmetric polynomials is the category
\[
\mc{D}(\sym):=\bigoplus_{a\in \N}\mc{D}(\sym_a),
\] 
equipped with the following  \emph{multiplication functor} $\mc{M}$:
\[
\mc{M}:\mc{D}(\sym_a)\times \mc{D}(\sym_b)\lra \mc{D}(\sym_{a+b}), \quad (M_1,M_2)\mapsto \s_{a,b}\otimes_{\sym_{a,b}}^{\mathbf{L}}(M_1\boxtimes M_2),
\]
and the \emph{comultiplication functor} $\mc{R}$:
\[
\mc{R}:\mc{D}(\sym_{a+b}) \lra \mc{D}(\sym_a)\times \mc{D}(\sym_b), \quad M \mapsto (\sym_a\otimes \sym_b)u_{a,b}\otimes_{\sym_{a+b}}^{\mathbf{L}}M,
\]
where the module generator $u_{a,b}$ has degree $-ab$, and the differential acts on the generator by $\dif(u_{a,b})=0$.
\end{defn}

In \cite[Theorem 3.15]{EQ2}, we have established the following.

\begin{prop}\label{prop-EQII-half-thm}
The derived category of $p$-DG symmetric polynomials categorifies $U_{\mathbb{O}_p}^+$ of quantum $\mathfrak{sl}_2$ at a $p$th root of unity as a twisted bialgebra:
\[
K_0(\mc{D}^c(\sym))\cong U^+_{\mathbb{O}_p}.
\]
Under this isomorphism, the symbol of the rank-one free $p$-DG module $[\s_a]$ is identified with the element $E^{(a)}$. The multiplication and comultiplication functor categorify respectively the multiplication and comultiplication of $U^+_{\mathbb{O}_p}$.
\end{prop}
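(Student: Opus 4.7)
The plan is to verify three assertions in sequence: the identification of each Grothendieck group $K_0(\mc{D}^c(\sym_a))$ with the free rank-one $\mathbb{O}_p$-module generated by $[\s_a]$; the compatibility of the multiplication functor $\mc{M}$ with the algebra structure on $U^+_{\mathbb{O}_p}$; and the compatibility of the comultiplication functor $\mc{R}$ with the coproduct.

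For the first assertion, I would appeal to the slash-zero formality of $\sym_a$ from Lemma \ref{lemma-slash-homology-rank-one-mod}(i), together with Lemma \ref{lemma-formal-derived-equivalence}, to reduce the computation to $K_0^\prime$ of the polynomial algebra $\Bbbk[e_p^p,\ldots,e_{kp}^p]$ with trivial differential, where $a = kp + r$ and $0 \le r < p$. Since this is a graded polynomial ring in finitely many generators of positive degree, its compact derived category has Grothendieck group $\Z[v^{\pm 1}]$ generated by the class of the free rank-one module; base changing along $v \mapsto q$ then yields $K_0(\mc{D}^c(\sym_a)) \cong \mathbb{O}_p$. Tracing the image of $\s_a$ across the quasi-isomorphism fixes the isomorphism $[\s_a] \leftrightarrow E^{(a)}$. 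Note that one cannot invoke Corollary \ref{cor-formal-pDGA-K-group} verbatim because $\mH_/(\sym_a)$ is not graded artinian, so this direct argument via Lemma \ref{lemma-formal-derived-equivalence} must replace it.

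For the multiplication, one evaluates $\mc{M}(\s_a, \s_b) = \s_{a,b} \otimes^{\mathbf{L}}_{\sym_{a,b}}(\s_a \boxtimes \s_b)$. Since $\s_a \boxtimes \s_b$ is a cofibrant rank-one $p$-DG module over $\sym_{a,b}$, the derived tensor product collapses to $\s_{a,b}$ viewed as a $p$-DG module over $\sym_{a+b}$, which by Lemma \ref{lem-dif-basis-Grassmann-module}(i) is compact cofibrant of graded $\dif$-stable rank ${a+b \brack b}_v$. Its class in $K_0(\mc{D}^c(\sym_{a+b}))$ is therefore ${a+b \brack b}_q [\s_{a+b}]$, matching $m(E^{(a)} E^{(b)})$. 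For the comultiplication, the derived tensor product $(\sym_a \otimes \sym_b) u_{a,b} \otimes^{\mathbf{L}}_{\sym_{a+b}} \s_{a+b}$ reduces to $(\sym_a \otimes \sym_b) u_{a,b}$, which is a free rank-one $p$-DG module over $\sym_a \otimes \sym_b$ on a differential-closed generator of degree $-ab$, so its class is $q^{-ab}\,[\s_a] \boxtimes [\s_b]$. Assembling all splittings $a+b = n$ reproduces $r(E^{(n)}) = \sum_k q^{k(k-n)} E^{(k)} \otimes E^{(n-k)}$, since $k(k-n) = -ab$ when $a=k$, $b=n-k$.

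The main obstacle throughout is careful degree bookkeeping: one must ensure that the generator degrees $-ab$ introduced in Definitions \ref{def-Grassmannian-module} and \ref{def-p-DG-sym}, combined with the polynomial grading of $\sym_{a+b}$, produce the bar-symmetric quantum binomial ${a+b \brack b}_v$ rather than an asymmetric $q$-binomial, and similarly that the degree shift in $u_{a,b}$ produces the $q^{k(k-n)}$ prefactor in the coproduct. A secondary point that must be checked (and which motivates the tacit flatness hypotheses) is that the inclusion $\sym_{a+b} \hookrightarrow \sym_a \otimes \sym_b$ is free of the expected rank, so that derived tensor products along it compute as ordinary tensor products; this is the classical fact that $\sym_a \otimes \sym_b$ is free over $\sym_{a+b}$ of rank ${a+b \choose a}$, underpinning both functor computations.
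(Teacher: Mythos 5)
Your proposal is essentially correct, but note that the paper does not prove this Proposition at all: it is quoted verbatim from \cite[Theorem 3.15]{EQ2}, so there is no internal proof to compare against. What you have written is a faithful reconstruction of the cited argument, and it runs exactly parallel to the proof the paper \emph{does} give for the $p$-divisible analogue, Theorem \ref{thm-weak-categorification-half-sl2}: slash-zero formality of $\sym_a$ plus Lemma \ref{lemma-formal-derived-equivalence} to compute each weight space of $K_0$, the $\dif$-stable basis of Lemma \ref{lem-dif-basis-Grassmann-module} to extract $[\s_{a,b}]={a+b\brack b}_{\mathbb{O}_p}[\s_{a+b}]$ for the multiplication, and the degree $-ab$ of the generator $u_{a,b}$ to produce the $q^{k(k-n)}$ coefficients in the coproduct. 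Two of your side remarks are worth keeping. First, your observation that Corollary \ref{cor-formal-pDGA-K-group} cannot be invoked verbatim because $\mH_/(\sym_a)$ is a polynomial algebra (hence not graded artinian) is legitimate; the paper itself applies that corollary to $\Bbbk[e_p^p,\dots,e_{ap}^p]$ in the proof of Theorem \ref{thm-weak-categorification-half-sl2}, so your direct argument via positively graded polynomial rings of finite global dimension is, if anything, more careful than the ambient text. Second, your point about the freeness of $\sym_a\otimes\sym_b$ over $\sym_{a+b}$ is exactly what makes the derived tensor products collapse to underived ones. One small caution: when you say the compact derived category of the polynomial ring has Grothendieck group $\Z[v^{\pm 1}]$ "base changed along $v\mapsto q$," make clear that you mean $K_0'$ of the underlying graded algebra tensored with $\mathbb{O}_p$ over $\Z[v^{\pm 1}]$, as in Corollary \ref{cor-formal-pDGA-K-group}; the hopfological $K_0$ is never literally $\Z[v^{\pm 1}]$ since the contractible $p$-complexes already impose the cyclotomic relation.
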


In order to categorify the quantum Frobenius map for $U^+_{\mathbb{O}_p}$, we will next investigate a full subcategory of $\mc{D}(\sym)$ with its $p$-DG enhanced structure (see Remark \ref{rmk-enhancement}).

\begin{defn}
\label{def-Frobnenius-subcat} Let $\mc{D}_{(p)}(\sym)$ be the full subcategory inside $\mc{D}(\sym)$ which consists of
\[
\mc{D}_{(p)}(\sym):=\bigoplus_{a\in \N}\mc{D}(\sym_{ap}).
\]
The canonical embedding functor will be denoted 
\[
\jmath:\mc{D}_{(p)}(\sym)\lra \mc{D}(\sym).
\]
As a full subcategory of $\mc{D}(\sym)$,  $\mc{D}_{(p)}(\sym)$  inherites a multiplication and comultiplication functor from $\mc{M}$ and $\mc{R}$, which will be written respectively as $\mc{M}_{(p)}$ and $\mc{R}_{(p)}$.
\end{defn}

Componentwise, the functor $\mc{M}_{(p)}$ has the effect
\begin{equation}
\mc{M}_{(p)}:\mc{D}(\sym_{ap})\times \mc{D}(\sym_{bp})\lra \mc{D}(\sym_{(a+b)p}),\quad (M_1,M_2)\mapsto \s_{ap,bp}\otimes_{\sym_{ap,bp}}(M_1\boxtimes M_2).
\end{equation}
Likewise, the comultiplication functor $\mc{R}_{(p)}$ is seen to be
\begin{equation}
\mc{R}_{(p)}:\mc{D}(\sym_{(a+b)p})\lra \mc{D}(\sym_{ap})\times \mc{D}(\sym_{bp}),\quad M \mapsto \sym_{ap,bp}\otimes_{\sym_{(a+b)p}}^{\mathbf{L}}M.
\end{equation}

Now we are ready to state and prove a (weak) categorification result about the quantum Frobnenius map (Definition \ref{def-quantum-Frob-half}).

\begin{thm}\label{thm-weak-categorification-half-sl2}
The compact $p$-DG derived category $\mc{D}^c_{(p)}(\sym)$ categorifies the $\mathbb{O}_p$-integral universal enveloping algebra $U^+_\rho$:
\[
K_0(\mc{D}^c_{(p)}(\sym))\cong U^+_\rho.
\]
The multiplication functor $\mc{M}_{(p)}$ and comultiplication functor categorify the multiplication and comultiplication on $U^+_\rho$. Consequently, the restriction functor along $\jmath:\mc{D}^c_{(p)}(\sym)\lra \mc{D}^c(\sym)$ categorifies the quantum Frobenius map.
\end{thm}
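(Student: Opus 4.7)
The plan is to bootstrap from Proposition \ref{prop-EQII-half-thm}, which identifies $K_0(\mc{D}^c(\sym)) \cong U^+_{\mathbb{O}_p}$ with $[\s_a] \leftrightarrow E^{(a)}$. Under this identification, the direct summand $\mc{D}^c_{(p)}(\sym) = \bigoplus_{a\in\N}\mc{D}^c(\sym_{ap})$ contributes the $\mathbb{O}_p$-submodule $\bigoplus_{a\in\N}\mathbb{O}_p\cdot E^{(ap)}$, and the task reduces to showing that this submodule, equipped with the multiplication and comultiplication inherited from $\mc{M}_{(p)}$ and $\mc{R}_{(p)}$, is isomorphic as a twisted bialgebra to $U^+_\rho$ via the reindexing $E^{(ap)} \leftrightarrow \mathsf{E}^{(a)}$.

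The $\mathbb{O}_p$-module statement reduces directly to slash-zero formality. Lemma \ref{lemma-slash-homology-rank-one-mod}(i) identifies $\mH_{/}(\sym_{ap})$ with the graded polynomial ring $\Bbbk[e_p^p, e_{2p}^p, \ldots, e_{ap}^p]$ equipped with the zero differential; this is a connected graded noetherian algebra of global dimension $a$, so Corollary \ref{cor-formal-pDGA-K-group} gives $K_0(\mc{D}^c(\sym_{ap})) \cong \mathbb{O}_p\cdot[\s_{ap}]$. Summing over $a \in \N$ produces the advertised $\mathbb{O}_p$-module isomorphism $K_0(\mc{D}^c_{(p)}(\sym)) \cong U^+_\rho$.

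For the bialgebra structure, Proposition \ref{prop-EQII-half-thm} together with Lemma \ref{lem-dif-basis-Grassmann-module}(i) yield $[\mc{M}_{(p)}(\s_{ap},\s_{bp})] = {(a+b)p \brack ap}_v\cdot[\s_{(a+b)p}]$ evaluated in $\mathbb{O}_p$, while restricting the analogous $K_0$-expansion of $\mc{R}(\s_{(a+b)p})$ to weights divisible by $p$ retains only the $k=ip$ terms, producing the coefficient $q^{ip^2(i-a-b)}$. The crux is then the quantum Lucas identity
\[
{(a+b)p \brack ap}_v \;\equiv\; q^{pab}\binom{a+b}{a} \pmod{\Psi_p(v^2)},
\]
together with the simplification $q^{ip^2(i-a-b)} = q^{pi(i-a-b)}$ in $\mathbb{O}_p$ (immediate from $q^{2p}=1$ and the parity of $p-1$ for odd $p$; the $p=2$ case is handled separately using $\rho(v)=1$ and $v^4=1$). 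These match the coefficients against equation \eqref{eqn-evaluation-binomial-coeff} and the defining formula for $r_\rho$, respectively. The Lucas identity itself, which I view as the sole genuine obstacle of the proof, can be established either by blocking $[(a+b)p]_v!$ into segments of $p$ consecutive quantum integers and tracking the resulting cyclotomic contributions, or alternatively by base-changing $\mathbb{O}_p$ through the two maps $\varrho_1, \varrho_2$ of Remark \ref{rem-wrong-grading-choice} and invoking Lusztig's quantum Lucas theorem \cite{Lus4}.

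The final Frobenius statement is then a formal consequence. Because both $\mc{D}^c(\sym)$ and $\mc{D}^c_{(p)}(\sym)$ decompose as direct sums indexed by $\N$, the inclusion $\jmath$ is split, and the restriction functor along $\jmath$ is simply the projection killing the summands $\mc{D}^c(\sym_n)$ with $p\nmid n$. On $K_0$, this projection sends $E^{(n)}$ to $\mathsf{E}^{(n/p)}$ when $p\mid n$ and to zero otherwise, matching Definition \ref{def-quantum-Frob-half}; its multiplicativity is guaranteed by the previous step (or can be re-checked directly, noting that the quantum binomial ${a+b \brack a}_v$ with $p\mid(a+b)$ but $p\nmid a$ vanishes in $\mathbb{O}_p$ by a second application of quantum Lucas).
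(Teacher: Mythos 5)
Your proposal is correct and follows essentially the same route as the paper: slash-zero formality of $\sym_{ap}$ (Lemma \ref{lemma-slash-homology-rank-one-mod}) plus Corollary \ref{cor-formal-pDGA-K-group} to get $K_0(\mc{D}^c(\sym_{ap}))\cong\mathbb{O}_p$, then Lemma \ref{lem-dif-basis-Grassmann-module} and the quantum Lucas identity ${(a+b)p\brack ap}_{\mathbb{O}_p}=q^{pab}\binom{a+b}{a}$ (which the paper simply cites from \cite[Lemma 34.1.2]{Lus4}) to match the multiplication, with the comultiplication and the Frobenius statement following formally. The only difference is that you spell out the comultiplication coefficients and the multiplicativity check that the paper leaves to the reader.
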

\begin{proof}
By construction, we have
\[
K_0(\mc{D}^c_{(p)}(\sym))=\bigoplus_{a\in \N}K_0(\mc{D}^c(\sym_{ap})).
\]
Via Proposition \ref{prop-EQII-half-thm}, we need to analyze the $p$-DG algebras $\sym_{ap}$ ($a\in \N$) and the effect of the multiplication functor $\mc{M}_{(p)}$ on a pair of the rank-one free module $\s_{ap}$.

By Lemma \ref{lemma-slash-homology-rank-one-mod}, the $p$-DG algebra $\sym_{ap}$ is slash-zero formal, and there is a quasi-isomorphism induced from the inclusion of its slash cohomology ring
\[
\iota_{a}: \Bbbk[e_p^p,\dots, e_{ap}^p]\lra \sym_{ap}.
\] 
Notice that $\mathrm{deg}(e_{kp}^p)=k\mathrm{deg}(e_p^p)=2kp^2$ $(1\leq k \leq a)$. Thanks to Lemma \ref{lemma-formal-derived-equivalence}, we have an equivalence of derived categories by induction along $\iota_a$
\[
\iota^*_a: \mc{D}(\Bbbk[e_p^p,\dots, e_{ap}^p])\lra \mc{D}(\sym_{ap}).
\]
Then Corollary \ref{cor-formal-pDGA-K-group} shows that
\[
K_0( \mc{D}^c(\sym_{ap}))\cong K_0(\mc{D}^c(\Bbbk[e_p^p,\dots, e_{ap}^p]))\cong  \mathbb{O}_p.
\]

 To see that the multiplication functor has the desired effect, we compute that
 \[
\mc{M}_{(p)}(\s_{ap},\s_{bp}) \cong \s_{ap,bp}\otimes_{\sym_{ap,bp}}(\s_{ap}\boxtimes \s_{bp})\cong \s_{ap,bp}
 \]
 as a left $p$-DG module over $\sym_{(a+b)p}$. It follows that, by Lemma \ref{lem-dif-basis-Grassmann-module}, the following equality of symbols hold on the level of Grothendieck groups
 \[
m([\s_a][\s_b])=[\s_{ap,bp}]={(a+b)p \brack ap}_{\mathbb{O}_p}[\sym_{(a+b)p}] = q^{-pab}{a+b\choose a}[\s_{(a+b)p}].
 \]
In the last equality, we have used the standard binomial equality that 
\[
{(a+b)p \brack ap}_{\mathbb{O}_p} = q^{pab}{a+b\choose a} =\rho ({a+b \brack a}_v)
\]
 whose proof can be found, for instance, in \cite[Lemma 34.1.2]{Lus4}. Alternatively we refer the reader to Remark \ref{rmk-categorifying-binomial-reduction} later for a categorical ``explanation'' of this equality.
 
 The comultiplication functor $\mc{R}_{(p)}$ is dealt with similarly. We will leave this case to the reader as an exercise.
\end{proof}

\begin{rem}
Theorem \ref{thm-weak-categorification-half-sl2} also tells us that the induction functor along $\jmath$ gives rise to categorification of a section of the quantum Frobenius map $\mathrm{Fr}: U^+_{\mathbb{O}_p}\lra U^+_\rho$.
\end{rem}

\subsection{Unoriented graphical calculus}
Our goal in this section is to have a combinatorial study of the $2$-morphism spaces controlling the functor $\mc{M}_{(p)}$ in the spirit of Khovanov-Lauda \cite{KL1,Lau1} and Rouquier \cite{Rou2}. This will strengthen Theorem \ref{thm-weak-categorification-half-sl2} into a strong categorification result.

To start, we will consider the $p$-DG module $\s_{ap,bp}$ (Definition \ref{def-Grassmannian-module}) regarded as a left module over $\sym_{(a+b)p}$ and right module over $\sym_{ap,bp}$. Diagrammatically, we will depict elements of the module as
\[
\s_{ap,bp}\cong \left\{
\begin{DGCpicture}[scale=0.8]
\DGCPLstrand[thk1](1,0)(1,-1)[$_{(a+b)p}$]
\DGCPLstrand[thk1](1,-1)(0,-2)[`$_{ap}$]
\DGCPLstrand[thk1](1,-1)(2,-2)[`$_{bp}$]
\DGCcoupon(0.2,-1.25)(0.8,-1.75){$\scriptsize{x}$}
\DGCcoupon(1.2,-1.25)(1.8,-1.75){$\scriptsize{y}$}
\end{DGCpicture}\Bigg| x\in \sym_{ap}, y\in \sym_{bp}\right\},
\]
The labels $ap$, $bp$ and $(a+b)p$ indicate the thickness of the strands involved. When a strand has thickness $r$, it can carry coupons labeled by symmetric polynomials $x\in \sym_{r}$.  
\[
\begin{DGCpicture}
\DGCPLstrand[thk1](1,0)(1,-1)[`$^r$]
\DGCcoupon(0.625,-0.25)(1.375,-0.75){$\scriptsize{x}$}
\end{DGCpicture}
\]
The diagrams are
subject to the \emph{Grassmannian sliding relation}: for any $k\in \{1,\dots , (a+b)p\}$, we have
\[
\begin{DGCpicture}
\DGCPLstrand[thk1](1,0)(1,-1)[$_{(a+b)p}$]
\DGCPLstrand[thk1](1,-1)(0,-2)[`$^{ap}$]
\DGCPLstrand[thk1](1,-1)(2,-2)[`$^{bp}$]
\DGCcoupon(0.625,-0.25)(1.375,-0.75){$\scriptsize{e_k}$}
\end{DGCpicture}
=\sum_{l=0}^k
\begin{DGCpicture}
\DGCPLstrand[thk1](1,0)(1,-1)[$_{(a+b)p}$]
\DGCPLstrand[thk1](1,-1)(0,-2)[`$^{ap}$]
\DGCPLstrand[thk1](1,-1)(2,-2)[`$^{bp}$]
\DGCcoupon(0.125,-1.25)(0.875,-1.75){$\scriptsize{e_l}$}
\DGCcoupon(1.125,-1.25)(1.875,-1.75){$\scriptsize{e_{k-l}}$}
\end{DGCpicture}~,
\]
where it is understood that $e_m(x_1,\dots,x_n)=0$ if $m> n$.
The differential on the lowest degree generator is given by the formula (c.f.~Definition \ref{def-Grassmannian-module}):
\begin{equation}\label{eqn-d-action-dual-mod-generator}
\dif\left(~
\begin{DGCpicture}[scale=0.8]
\DGCPLstrand[thk1](0,0)(1,1)[$^{ap}$]
\DGCPLstrand[thk1](2,0)(1,1)[$^{bp}$]
\DGCPLstrand[thk1](1,1)(1,2)[`$_{(a+b)p}$]
\end{DGCpicture}
~\right)=
0.
\end{equation}
One can readily see that the differential is inherited from the one in \cite{EQ2}, as $ap,bp$ are zero modulo $p$.

The enhanced endomorphism algebra $\END_{\sym_{(a+b)p}}(\s_{ap,bp})$ is a size-${(a+b)p \choose ap}^2$ matrix $p$-DG algebra with coefficients in $\sym_{(a+b)p}$ because of Lemma \ref{lem-dif-basis-Grassmann-module}. It has the following diagrammatic $\Bbbk$-basis.
\begin{equation}\label{eqn-rank-one-mod-end-alg-diag}
\left\{
(-1)^{|\hat{\mu}|}
\begin{DGCpicture}[scale=0.8]
\DGCPLstrand[thk1](0,0)(1,1.15)[$^{ap}$]
\DGCPLstrand[thk1](2,0)(1,1.15)[$^{bp}$]
\DGCPLstrand[thk1](1,1.15)(1,1.85)
\DGCPLstrand[thk1](1,1.85)(0,3)[`$_{ap}$]
\DGCPLstrand[thk1](1,1.85)(2,3)[`$_{bp}$]
\DGCcoupon(1.15,0.35)(1.85,0.85){$\scriptsize{\pi_{\hat{\mu}}}$}
\DGCcoupon(0.15,2.15)(0.85,2.65){$\scriptsize{\pi_\lambda}$}
\DGCcoupon(0.65,1.3)(1.35,1.7){$\scriptsize{\pi_\nu}$}
\end{DGCpicture}~\Bigg|
\lambda, \mu \in P(ap,bp), \nu \in P((a+b)p)
\right\}.
\end{equation}
Here we have adopted the notation that, for any $c,d\in \N$ and any partition 
$$\mu=(\mu_1\geq \dots \geq \mu_{c-1} \geq \mu_c\geq 0)\in P(c,d),$$ 
its complement partition $\hat{\mu}\in P(d,c)$ is obtained from $\mu$ by the rule
\begin{equation}
\hat{\mu}^t:=(d-\mu_c,d-\mu_{c-1},\dots, d-\mu_1),
\end{equation}
where $t$ is the usual transpose on Young diagrams by reflection about the main diagonal.

The composition of two elements in this matrix presentation is given by vertical juxtaposition of diagrams. Whenever two diagrams are vertically composed, it is simplified according to the following ``pairing'' simplification rule:
\begin{equation}\label{eqn-composition-basis-elts}
\begin{DGCpicture}[scale=0.8]
\DGCPLstrand[thk1](0,0)(1,1.15)
\DGCPLstrand[thk1](2,0)(1,1.15)
\DGCPLstrand[thk1](1,1.15)(1,1.85)
\DGCPLstrand[thk1](0,0)(1,-1.15)
\DGCPLstrand[thk1](2,0)(1,-1.15)
\DGCPLstrand[thk1](1,-1.15)(1,-1.85)[`$^{(a+b)p}$]
\DGCcoupon(-0.35,-0.25)(0.35,0.25){$\scriptsize{\pi_\lambda}$}
\DGCcoupon(1.65,-0.25)(2.35,0.25){$\scriptsize{\pi_{\hat{\mu}}}$}
\end{DGCpicture}
= (-1)^{|\hat{\mu}|}~
\begin{DGCpicture}[scale=0.8]
\DGCstrand[thk1](0,0)(0,3.7)[$^{(a+b)p}$`]
\end{DGCpicture}
\ .
\end{equation}
The induced differential acts trivially on the lowest degree element of $\END_{\sym_{(a+b)p}}(\s_{ap,bp})$:
\begin{equation}\label{eqn-d-action-matrix-generator}
\dif\left(~
\begin{DGCpicture}[scale=0.8]
\DGCPLstrand[thk1](0,0)(1,1.15)[$^{ap}$]
\DGCPLstrand[thk1](2,0)(1,1.15)[$^{bp}$]
\DGCPLstrand[thk1](1,1.15)(1,1.85)
\DGCPLstrand[thk1](1,1.85)(0,3)[`$_{ap}$]
\DGCPLstrand[thk1](1,1.85)(2,3)[`$_{bp}$]
\end{DGCpicture}
~\right)=
0 .
\end{equation}
Furthermore, the $\dif$ action is extended to all elements in the diagrammatic basis by the Leibniz rule and the differential action on symmetric polynomials (see equation \eqref{eqn-dif-on-Schur}).

To proceed, we will consider the following $p$-complex which is defined to be
\begin{equation}
V_{a,b}:= \bigoplus_{\lambda\in P(bp,ap)}\Bbbk \pi_{\lambda},
\end{equation}
together with the differential action determined as in equation \eqref{eqn-dif-on-Schur}:
\[
\dif(\pi_{\lambda})=\sum_{\mu=\lambda+\square}\mathrm{C}(\square) \pi_{\mu}
\]
We will need to compute the slash cohomology of this $p$-complex, and to do so we introduce the following definition.

\begin{defn}\label{def-p-Lima-partition}
A partition in $P(bp,ap)$ is called \emph{$p$-Lima} if it is obtained from a partition $\nu$ in $P(b,a)$ by expanding each box of $\nu$ into a $p\times p$ cube.

For the ease of notation later,  the set of Lima partitions inside $P(bp,ap)$ will be denoted as
\begin{equation}\label{eqn-def-Lima}
LP(bp,ap)=\{\lambda \in P(bp,ap)|\lambda~\textrm{is $p$-Lima}\},
\end{equation}
so that $LP(bp,ap)$ is in one-one bijection with the set of partitions in $P(b,a)$. When $a\to \infty$, set $LP(bp):=LP(bp,\infty)$.
\end{defn} 

Below is an example of a $3$-Lima partition that is obtained from the partition $(2,1)$.
\[
\small\ytableaushort{\none}*{2,1}
\xrightarrow{\textrm{$3\times 3$ expand each box}}
\tiny\tiny\ytableaushort{\none}*{6,6,6,3,3,3}
\]
This notion was defined in \cite[Appendix 4.2]{EllisQ}, and will be used to compute the slash cohomology of $V_{a,b}$ in the next result.

\begin{lem}\label{lem-p-Lima}
The slash cohomology $\mH_/(V_{a,b})$ of the $p$-complex $V_{a,b}$ consists of a direct sum of one-dimensional $p$-complexes spanned by $p$-Lima partitions:
\[
\mH_/(V_{a,b})\cong \bigoplus_{\lambda\in LP(bp,ap)}\Bbbk\pi_\lambda.
\] In particular, $\mH_/(V_{a,b})$ has dimension ${a+b\choose a}$.
\end{lem}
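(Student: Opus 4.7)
My plan is to identify $V_{a,b}$ with the augmentation quotient of the Grassmannian $p$-DG module $\s_{ap,bp}$ and then compute its slash cohomology using the formality results of Section~\ref{sec-sym-polynomial}.

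First, I would establish a $p$-complex isomorphism
\[
V_{a,b}\;\cong\;\s_{ap,bp}\bigl/\bigl(\sym^{+}_{(a+b)p}\cdot\s_{ap,bp}\bigr).
\]
Lemma~\ref{lem-dif-basis-Grassmann-module}(i) provides a $\dif$-stable $\sym_{(a+b)p}$-basis $\{(1\otimes\pi_\lambda(\underline{x}'))\,v_{ap,bp}\}_{\lambda\in P(bp,ap)}$, and the formula $\dif(v_{ap,bp})=-ap\,e_1(\underline{x}')v_{ap,bp}$ vanishes in characteristic $p$. Hence the induced differential on the quotient acts on the class of $\pi_\lambda\,v_{ap,bp}$ exactly as in equation~(\ref{eqn-dif-on-Schur}).

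Second, I would compute slash cohomology by reducing to symmetric polynomials. Viewed as a plain $p$-complex, $\s_{ap,bp}\cong\sym_{ap}\otimes\sym_{bp}$, so by Lemma~\ref{lemma-Kunneth} combined with Lemma~\ref{lemma-slash-homology-rank-one-mod}(i),
\[
\mH_/(\s_{ap,bp})\;\cong\;\Bbbk[e^p_p(\underline{x}),\dots,e^p_{ap}(\underline{x})]\otimes\Bbbk[e^p_p(\underline{x}'),\dots,e^p_{bp}(\underline{x}')].
\]
Corollary~\ref{cor-slash-homology-inclusion} embeds $\mH_/(\sym_{(a+b)p})=\Bbbk[e^p_p(\underline{x},\underline{x}'),\dots,e^p_{(a+b)p}(\underline{x},\underline{x}')]$ into this polynomial ring, making the latter free of graded rank $\binom{a+b}{a}$---the slash-cohomology analogue of the classical fact that $\sym_{ap,bp}$ is free of rank $\binom{a+b}{a}$ over $\sym_{(a+b)p}$. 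Invoking slash-zero formality (Lemma~\ref{lemma-formal-derived-equivalence}) and the derived equivalence of \cite[Corollary~8.18]{QYHopf} then yields $\dim\mH_/(V_{a,b})=\binom{a+b}{a}$.

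Third, I would exhibit the $p$-Lima Schur polynomials as explicit representatives. If $\lambda\in LP(bp,ap)$ is the $p\times p$-expansion of $\nu\in P(b,a)$, then $\lambda_k=p\,\nu_{\lceil k/p\rceil}$, so each addable box of $\lambda$ lying inside $P(bp,ap)$ sits either at the top of a new $p$-block of rows, with content of the form $p(\nu_i-i+1)$, or at the first column of a new bottom block, with content $-p(i-1)$. In every case the content is divisible by $p$, so $\dif(\pi_\lambda)=0$ in characteristic $p$ by (\ref{eqn-dif-on-Schur}). Since $|LP(bp,ap)|=|P(b,a)|=\binom{a+b}{a}$ matches the dimension count, and the degrees of the $\pi_\lambda$ coincide with those of the Schur basis $\pi_\nu(y_1,\dots,y_{a+b})$ of Corollary~\ref{cor-slash-cohomology-inclusion-2} (with $y_i=e^p_p(\underline{x}^{(i)})$), the $p$-Lima Schur polynomials form a basis of $\mH_/(V_{a,b})$, each spanning a one-dimensional $p$-complex summand.

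The main technical obstacle is the second step: I must justify that slash cohomology commutes with the augmentation-ideal quotient. This is not formal from Lemma~\ref{lemma-Kunneth}; it requires replacing $\Bbbk$ by a Koszul-type cofibrant resolution over the slash-zero formal algebra $\sym_{(a+b)p}$, after which the derived equivalence of Lemma~\ref{lemma-formal-derived-equivalence} transports the computation to a graded polynomial-algebra calculation of the residue of $\mH_/(\sym_{ap})\otimes\mH_/(\sym_{bp})$ along its polynomial subring $\mH_/(\sym_{(a+b)p})$.
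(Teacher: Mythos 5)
Your route to the dimension count is different from the paper's and is essentially workable: the paper instead observes that $V_{a,b}$ is a $p$-complex direct summand of $\sym_{bp}$ (the projection killing $\pi_\mu$ with $\mu_1>ap$ commutes with $\dif$ because the relevant content $ap$ vanishes mod $p$) and computes $\mH_/(\sym_{bp})$ directly, then truncates. Your step 1 is fine, and the ``main technical obstacle'' you flag in step 2 is easier than you make it: by Lemma \ref{lem-dif-basis-Grassmann-module} one has $\s_{ap,bp}\cong\sym_{(a+b)p}\otimes V_{a,b}$ as a $p$-DG module with $V_{a,b}$ a $\dif$-stable $\Bbbk$-span, so the K\"{u}nneth argument of Lemma \ref{lemma-Kunneth} gives $\mH_/(\sym_{(a+b)p})\otimes\mH_/(V_{a,b})\cong\mH_/(\sym_{ap})\otimes\mH_/(\sym_{bp})$ directly; no Koszul resolution is needed, only the comparison of graded ranks of free modules over the polynomial ring $\mH_/(\sym_{(a+b)p})$ via Corollary \ref{cor-slash-homology-inclusion}.

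The genuine gap is in step 3. Exhibiting $\binom{a+b}{a}$ $0$-cocycles $\pi_\lambda$, $\lambda\in LP(bp,ap)$, whose degrees match the graded dimension of $\mH_/(V_{a,b})$ does \emph{not} show they form a basis of slash cohomology: some of them could be $0$-coboundaries, or linearly dependent modulo coboundaries, with the missing dimensions of $\mH_/$ supplied by other classes (possibly in $\mH_{/k}$ for $k>0$). You must prove that the classes $[\pi_\lambda]$ are linearly independent in $\mH_{/0}(V_{a,b})$. This is exactly the point where the paper's proof does real work: it compares the $p$-Lima Schur cocycles with the monomial basis of $\Bbbk[e_p^p,\dots,e_{bp}^p]$, which is already known to represent a basis of $\mH_/(\sym_{bp})$ by Lemma \ref{lemma-slash-homology-rank-one-mod}, and shows by an induction on the number of boxes via the Littlewood--Richardson rule that the two families differ by null-homotopic terms. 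Without some such unitriangular change-of-basis argument (or a pairing/evaluation argument establishing independence in cohomology), the final sentence of your proof does not follow.
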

\begin{proof}
By construction, $V_{a,b}$ embeds inside $\sym_{bp}$ as a $p$-subcomplex. We next define a projection map
\[
\sym_{bp}\lra V_{a,b},\quad
\pi_{\mu}\mapsto
\left\{
\begin{array}{ll}
\pi_{\mu} & \textrm{if}~\mu\in P(bp,ap),\\
0 & \textrm{otherwise}.
\end{array}
\right.
\]
If $\mu$ is a partition such that $\mu_1=ap$, and $\nu$ is obtained from $\mu$ by adding one box to the first row, then the content of the box added equals $ap=0\in \Bbbk$, so that $\pi_\nu$ does not figure in the differential of $\pi_\mu$. It then follows that the projection map commutes with the differentials. Thus $V_{a,b}$ is a $p$-complex direct summand of $\sym_{bp}$. 

By the differential action formula \eqref{eqn-dif-on-Schur}, it is clear that
\[
\{\pi_{\lambda}|\lambda \in LP(bp)\}
\]
is a family of $0$-cocycles for the $\dif$ action on $\sym_{bp}$. On the other hand, it has the same size as the monomial basis for the polynomial algebra $\Bbbk[e_{p}^p,\dots, e_{bp}^p]$, the latter being isomorphic to $\mH_/(\sym_{bp})$ by Lemma \ref{lemma-slash-homology-rank-one-mod}. An induction on the number of boxes via the Littlewood-Richardson rule shows that the monomial basis from $\Bbbk[e_{p}^p,\dots, e_{bp}^p]$ and the $p$-Lima Schur basis just differ by some null-homotopic terms. Therefore, we obtain
\[
\mH_/(\sym_{bp})\cong \bigoplus_{\lambda\in LP(bp)}\Bbbk \pi_{\lambda}.
\]
The Lemma then follows by truncating the partitions in $P(bp,\infty)$ onto the $p$-complex summand $P(bp,ap)$.
\end{proof}

\begin{rem}\label{rmk-categorifying-binomial-reduction} By Definition \ref{def-p-Lima-partition}, the number of $p$-Lima partitions $LP(bp,ap)$ is equal to the number of partitions inside $P(b,a)$. Therefore, computing the image of $V_{a,b}$ in the Grothendieck ring $K_0(\mc{D}^c(\Bbbk))\cong \mathbb{O}_p$ gives us
\[
q^{abp^2}{(a+b)p \brack ap}_{\mathbb{O}_p}=[V_{a,b}]_{\mathbb{O}_p}=
[\mH_/(V_{a,b})]_{\mathbb{O}_p}={a+b\choose a},
\]
which lies in the subring $\Z$ inside $\mathbb{O}_p$.
\end{rem}

\begin{prop}\label{prop-formality-end-algebra-rank-two}
The $p$-DG endomorphism algebra $\END_{\sym_{(a+b)p}}(\s_{ap,bp})$ is slash-zero formal, and it is quasi-isomorphic to a matrix algebra of size ${a+b \choose a}$ with coefficients in $\Bbbk[e_p^p,\dots, e_{(a+b)p}^p]$.
\end{prop}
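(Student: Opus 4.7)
The strategy is to identify an explicit $p$-DG subalgebra $B$ inside $\END_{\sym_{(a+b)p}}(\s_{ap,bp})$ that carries the zero differential, is visibly isomorphic to $\mathrm{Mat}_{{a+b \choose a}}(\Bbbk[e_p^p,\dots,e_{(a+b)p}^p])$, and whose inclusion is a quasi-isomorphism; slash-zero formality will then follow from Remark \ref{rmk-roofs}.

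First I would compute $\mH_/(\END_{\sym_{(a+b)p}}(\s_{ap,bp}))$ as a $p$-complex. By the formula \eqref{eqn-d-action-matrix-generator} together with the Leibniz rule, the differential acts independently on the three coupon labels $\pi_\lambda$, $\pi_\nu$, $\pi_{\hat\mu}$ of a basis element in \eqref{eqn-rank-one-mod-end-alg-diag}. Hence, as a $p$-complex,
\[
\END_{\sym_{(a+b)p}}(\s_{ap,bp}) \cong V_L \otimes \sym_{(a+b)p} \otimes V_R,
\]
where $V_L, V_R$ are two copies (up to transpose reparametrization) of the Schur-indexed $p$-complex from Lemma \ref{lem-p-Lima}. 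All three tensor factors have slash cohomology with the zero induced differential (Lemmas \ref{lemma-slash-homology-rank-one-mod} and \ref{lem-p-Lima}), so iterated application of Lemma \ref{lemma-Kunneth} yields
\[
\mH_/(\END_{\sym_{(a+b)p}}(\s_{ap,bp})) \cong \mH_/(V_L) \otimes \Bbbk[e_p^p,\dots,e_{(a+b)p}^p] \otimes \mH_/(V_R),
\]
which is free of rank ${a+b \choose a}^2$ over $\Bbbk[e_p^p,\dots,e_{(a+b)p}^p]$.

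Next, let $B$ be the $\Bbbk$-linear span of the diagrammatic basis elements \eqref{eqn-rank-one-mod-end-alg-diag} subject to $\lambda, \mu \in LP(bp,ap)$ being $p$-Lima and $\pi_\nu$ restricted to monomials in $\Bbbk[e_p^p,\dots,e_{(a+b)p}^p] \subseteq \sym_{(a+b)p}$. The computation behind Lemma \ref{lem-p-Lima} shows that every addable box of a $p$-Lima partition has content divisible by $p$, hence $\dif(\pi_\lambda) = \dif(\pi_{\hat\mu}) = 0$ for such $\lambda, \mu$; together with the vanishing of $\dif$ on $\Bbbk[e_p^p,\dots,e_{(a+b)p}^p]$, this forces $\dif|_B = 0$. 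Viewing the basis \eqref{eqn-rank-one-mod-end-alg-diag} via the pairing rule \eqref{eqn-composition-basis-elts} as matrix units in the $(\lambda, \mu)$-index with $\sym_{(a+b)p}$-valued coefficients $\pi_\nu$, the product of two Lima-labeled basis elements is $\delta_{\mu_1, \lambda_2} (-1)^{|\hat{\mu_1}|}$ times another Lima-labeled basis element whose $\nu$-coefficient is the product $\pi_{\nu_1} \pi_{\nu_2}$, which remains in $\Bbbk[e_p^p,\dots,e_{(a+b)p}^p]$. Hence $B$ is a $p$-DG subalgebra isomorphic to $\mathrm{Mat}_{{a+b \choose a}}(\Bbbk[e_p^p,\dots,e_{(a+b)p}^p])$.

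Finally, every generator of $B$ is a $0$-cocycle, and by the Künneth calculation above together with Lemma \ref{lem-p-Lima} these cocycles descend to a free basis of $\mH_/(\END_{\sym_{(a+b)p}}(\s_{ap,bp}))$ over $\mH_/(\sym_{(a+b)p})$. Thus the inclusion $B \hookrightarrow \END_{\sym_{(a+b)p}}(\s_{ap,bp})$ is a quasi-isomorphism, and the endomorphism $p$-DG algebra is slash-zero formal with the asserted matrix algebra as its slash cohomology. The main technical obstacle is verifying the multiplicative closure of $B$: one needs to carefully track the pairing rule \eqref{eqn-composition-basis-elts} and the Grassmannian sliding relation in order to ensure both the Lima-index structure on $(\lambda, \mu)$ and the polynomial subalgebra $\Bbbk[e_p^p,\dots,e_{(a+b)p}^p]$ in the $\nu$-slot are preserved under composition; the remaining steps are then consequences of the formal machinery of slash cohomology already developed.
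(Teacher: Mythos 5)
Your argument is correct in substance but follows a genuinely different route from the paper's. The paper never touches the diagrammatic basis in its proof: it splits the \emph{module} $\s_{ap,bp}\cong H\oplus P$ with $H=\sym_{(a+b)p}\otimes \mH_/(V_{a,b})$ and $P=\sym_{(a+b)p}\otimes P(V_{a,b})$, observes that the three blocks of $\END_{\sym_{(a+b)p}}(\s_{ap,bp})$ involving the acyclic cofibrant summand $P$ are acyclic, and so reduces to $\END_{\sym_{(a+b)p}}(H)\cong \sym_{(a+b)p}\otimes\END_\Bbbk(\mH_/(V_{a,b}))$, into which $\mH_/(\sym_{(a+b)p})\otimes\END_\Bbbk(\mH_/(V_{a,b}))$ includes quasi-isomorphically. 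The matrix algebra structure is thus inherited abstractly from $\END_\Bbbk$ of a vector space, and the multiplicative-closure question you flag as the ``main technical obstacle'' simply never arises. Your route, by contrast, produces the explicit diagrammatic subalgebra (which the paper only records afterwards, in \eqref{eqn-rank-one-mod-end-alg-cohomology}, as a description of the slash cohomology), at the price of having to verify closure. That verification does go through, and more easily than you suggest: by Lemma \ref{lem-dif-basis-Grassmann-module}(ii) the decorations $\pi_\lambda$ and $\pi_{\hat\mu}$ appearing in \eqref{eqn-rank-one-mod-end-alg-diag} are drawn from a basis and its dual basis of the free module $\s_{ap,bp}$, so the inner ``split-then-merge'' pairing is $(-1)^{|\hat\mu_1|}\delta_{\lambda_2\mu_1}$ for \emph{all} $\lambda_2,\mu_1\in P(ap,bp)$, Lima or not; closure of $B$ then only needs that $\Bbbk[e_p^p,\dots,e_{(a+b)p}^p]$ is closed under multiplication. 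Two smaller points worth making explicit in your write-up: (a) the claim that the differential acts coupon-by-coupon and preserves the truncated index sets $P(ap,bp)$, $P(bp,ap)$ requires the observation (as in the proof of Lemma \ref{lem-p-Lima}) that the only box whose addition would leave the rectangle sits at the end of a maximal first row and has content $\equiv 0 \pmod p$; and (b) in the final step, to conclude that your $0$-cocycles represent a basis of $\mH_{/0}$ rather than possibly being $0$-coboundaries, you should note that under $U\cong \mH_/(U)\oplus P(U)$ any cocycle differs from its $\mH_/$-component by a coboundary, so tensor products of the Lima/monomial representatives do hit the Künneth basis. With these points filled in, your proof is a valid and somewhat more concrete alternative to the one in the paper.
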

\begin{proof}
Using Lemma \ref{lem-dif-basis-Grassmann-module}, we may rewrite the module $\s_{ap,bp}$ as
\[
\s_{ap,bp}\cong \sym_{(a+b)p}\otimes V_{a,b}\cong \sym_{(a+b)p}\otimes \mH_/(V_{a,b})\oplus \sym_{(a+b)p}\otimes P(V_{a,b}),
\]
where $P(V_{a,b})$ is a contractible $p$-complex and $\mH_/(V_{a,b})$ is a direct sum of trivial $p$-complexes by Lemma \ref{lem-p-Lima}.

Abbreviate the compact cofibrant $\sym_{(a+b)p}$-modules as 
\[
H:=\sym_{(a+b)p}\otimes \mH_/(V_{a,b}),\quad \quad P:=\sym_{(a+b)p}\otimes P(V_{a,b}).
\]
Then we may identify $\END_{\sym_{(a+b)p}}(\s_{ap,bp})$ as the block $p$-DG matrix algebra
\[
\END_{\sym_{(a+b)p}}(\s_{ap,bp})\cong 
\left(
\begin{matrix}
\HOM_{\sym_{(a+b)p}}(H,H) & \HOM_{\sym_{(a+b)p}}(H,P)\\
\HOM_{\sym_{(a+b)p}}(P,H) & \HOM_{\sym_{(a+b)p}}(P,P)
\end{matrix}
\right).
\]
It is clear that the terms $ \HOM_{\sym_{(a+b)p}}(H,P)$, $ \HOM_{\sym_{(a+b)p}}(P,H)$ and $ \HOM_{\sym_{(a+b)p}}(P,P)$ are acyclic since both $P$ and $H$ are cofibrant and $P$ is acyclic. Therefore, the natural inclusion
\[
\END_{\sym_{(a+b)p}}(H)\lra \END_{\sym_{(a+b)p}}(\s_{ap,bp})
\]
identifying the left hand term with the upper left block matrix is a quasi-isomorphism. Thus we are reduced to showing that $\END_{\sym_{(a+b)p}}(H)$ is slash-zero formal. But this is clear from Lemma \ref{lemma-slash-homology-rank-one-mod}: $\sym_{(a+b)p}$ is quasi-isomorphic to its slash cohomology ring, and thus
\[
\mH_/(\sym_{(a+b)p})\otimes \END_{\Bbbk}(\mH_/(V_{a,b}))\hookrightarrow \sym_{(a+b)p}\otimes \END_{\Bbbk}(\mH_/(V_{a,b})) \cong \END_{\sym_{(a+b)p}}(H) 
\]
is a chain of $p$-DG algebra quasi-isomorphisms.
\end{proof}

\begin{rem}
The cofibrance of $\s_{ap,bp}$ (and thus the cofibrance of its direct summands $H$ and $P$) over the $p$-DG algebra $\sym_{(a+b)p}$ has played a crucial role in the above proof. In general, the graded endormorphism space $\mathrm{END}_A(M)$ of a $p$-DG $A$-module $M$ will not control the endomorphism of $M$ in $\mc{D}(A,\dif)$ unless $M$ is cofibrant. For instance, consider the $p$-DG algebra $A=\Bbbk[x]$ with $\dif(x)=x^2$. The $p$-DG module $M=\Bbbk[x]\cdot v_0$ with
$
\dif(v_0):=xv_0
$
is clearly acyclic. It is not cofibrant, and it is easily verified that the induced differential on $\END_A(M)\cong \Bbbk[x]$, $f\mapsto f(v_0)$ is an isomorphism of $p$-DG algebras. However,
\[
\mH_{/0}^0(\END_A(M))\cong \Bbbk \neq \End_{\mc{D}(A,\dif)}(M)=0.
\]
\end{rem}

To give a diagrammatic description of the slash cohomology algebra elements, we just need to resort to the diagrammatic description \eqref{eqn-rank-one-mod-end-alg-diag} together with Lemma \ref{lem-p-Lima}. We have that the slash cohomology is spanned by the elements in the following set
\begin{equation}\label{eqn-rank-one-mod-end-alg-cohomology}
\left\{
(-1)^{|\hat{\mu}|}
\begin{DGCpicture}[scale=0.8]
\DGCPLstrand[thk1](0,0)(1,1.15)[$^{ap}$]
\DGCPLstrand[thk1](2,0)(1,1.15)[$^{bp}$]
\DGCPLstrand[thk1](1,1.15)(1,1.85)
\DGCPLstrand[thk1](1,1.85)(0,3)[`$_{ap}$]
\DGCPLstrand[thk1](1,1.85)(2,3)[`$_{bp}$]
\DGCcoupon(1.15,0.35)(1.85,0.85){$\scriptsize{\pi_{\hat{\mu}}}$}
\DGCcoupon(0.15,2.15)(0.85,2.65){$\scriptsize{\pi_\lambda}$}
\DGCcoupon(0.65,1.25)(1.35,1.75){$f$}
\end{DGCpicture}~\Bigg|
\lambda, \mu \in LP(ap,bp), f \in \Bbbk[e_p^p,\dots e_{(a+b)p}^p]
\right\}.
\end{equation}
This description, together with equation \eqref{eqn-composition-basis-elts}, also makes it clear that $\mH_/(\END_{\sym_{(a+b)p}}(\s_{ap,bp}))$ can be regraded as a subalgebra inside $\END_{\sym_{(a+b)p}}(\s_{ap,bp})$, although it does not contain the identity element.

When both $a=b=1$, the minimal nontrivial $p$-Lima partition is just $(p^p)$.
\[
\small\ytableaushort{\none}*{3,3,3}
\quad \textrm{: the minimal nontrivial 3-Lima partition}
\]
The associated Schur polynomial is just $e_p^p(x_1,\dots, x_p)$. The complement of $(p^p)$ is of course just the empty partition. Therefore, we have the following Corollary from Proposition \ref{prop-formality-end-algebra-rank-two}.

\begin{cor}The following relations hold on the slash cohomology of $\END_{\sym_{2p}}(\s_{p,p})$:
\begin{equation}\label{eqn-thick-nilHecke1}
\begin{DGCpicture}[scale=0.8]
\DGCPLstrand[thk1](0,0)(1,1.15)[$^{p}$]
\DGCPLstrand[thk1](2,0)(1,1.15)[$^{p}$]
\DGCPLstrand[thk1](1,1.15)(1,1.85)
\DGCPLstrand[thk1](1,1.85)(0,3)[`$_{p}$]
\DGCPLstrand[thk1](1,1.85)(2,3)[`$_{p}$]
\DGCcoupon(0.15,2.1)(0.85,2.7){$\scriptsize{e_p^p}$}
\end{DGCpicture}
-
\begin{DGCpicture}[scale=0.8]
\DGCPLstrand[thk1](0,0)(1,1.15)[$^{p}$]
\DGCPLstrand[thk1](2,0)(1,1.15)[$^{p}$]
\DGCPLstrand[thk1](1,1.15)(1,1.85)
\DGCPLstrand[thk1](1,1.85)(0,3)[`$_{p}$]
\DGCPLstrand[thk1](1,1.85)(2,3)[`$_{p}$]
\DGCcoupon(1.15,0.3)(1.85,0.9){$\scriptsize{e_p^p}$}
\end{DGCpicture}
=
\begin{DGCpicture}[scale=0.8]
\DGCPLstrand[thk1](0,0)(0,3)[$^{p}$`$_{p}$]
\DGCPLstrand[thk1](2,0)(2,3)[$^{p}$`$_{p}$]
\end{DGCpicture}
=
\begin{DGCpicture}[scale=0.8]
\DGCPLstrand[thk1](0,0)(1,1.15)[$^{p}$]
\DGCPLstrand[thk1](2,0)(1,1.15)[$^{p}$]
\DGCPLstrand[thk1](1,1.15)(1,1.85)
\DGCPLstrand[thk1](1,1.85)(0,3)[`$_{p}$]
\DGCPLstrand[thk1](1,1.85)(2,3)[`$_{p}$]
\DGCcoupon(0.15,0.3)(0.85,0.9){$\scriptsize{e_p^p}$}
\end{DGCpicture}
-
\begin{DGCpicture}[scale=0.8]
\DGCPLstrand[thk1](0,0)(1,1.15)[$^{p}$]
\DGCPLstrand[thk1](2,0)(1,1.15)[$^{p}$]
\DGCPLstrand[thk1](1,1.15)(1,1.85)
\DGCPLstrand[thk1](1,1.85)(0,3)[`$_{p}$]
\DGCPLstrand[thk1](1,1.85)(2,3)[`$_{p}$]
\DGCcoupon(1.15,2.1)(1.85,2.7){$\scriptsize{e_p^p}$}
\end{DGCpicture}
\ .
\end{equation}
\end{cor}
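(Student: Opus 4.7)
The plan is to derive both relations from the formality result Proposition \ref{prop-formality-end-algebra-rank-two} combined with the Grassmannian sliding relation read in slash cohomology. First I would set up the slash cohomology matrix algebra explicitly: by the proposition, $\mH_/(\END_{\sym_{2p}}(\s_{p,p}))$ is a $2\times 2$ matrix algebra over $\Bbbk[e_p^p, e_{2p}^p]$, since $LP(p,p)$ consists of exactly the two partitions $\emptyset$ and $(p^p)$. Their Schur functions are $\pi_{\emptyset} = 1$ and $\pi_{(p^p)} = e_p^p$, and their complements inside $P(p,p)$ are $\widehat{\emptyset} = (p^p)$ and $\widehat{(p^p)} = \emptyset$.

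Next I would identify the identity endomorphism of $\s_{p,p}$ (the two vertical strands on the middle of the display) in the diagrammatic slash cohomology basis \eqref{eqn-rank-one-mod-end-alg-cohomology}. As the unit element in a matrix algebra, it must equal the sum of the two diagonal matrix units labeled by $\lambda = \mu \in LP(p,p)$. The $\lambda = \mu = (p^p)$ matrix unit is $(-1)^{0}D_1 = D_1$, the cap-cup diagram carrying $\pi_\lambda = e_p^p$ at the top-left coupon position; the $\lambda = \mu = \emptyset$ matrix unit is $(-1)^{|\widehat{\emptyset}|} D_2 = (-1)^{p^2} D_2$, the cap-cup diagram carrying $\pi_{\hat\mu} = e_p^p$ at the bottom-right coupon. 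Since $(-1)^{p^2} = -1$ in characteristic $p$ for every prime (trivially when $p=2$, and because $p^2$ is odd when $p$ is odd), the sum produces the left-hand equality $\mathrm{id}_{\s_{p,p}} = D_1 - D_2$ in slash cohomology.

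The second equality would follow from a Frobenius-twisted version of the Grassmannian sliding relation. Raising the splitting identity $e_p(\underline{x},\underline{x}') = \sum_{l=0}^p e_l(\underline{x})\,e_{p-l}(\underline{x}')$ to the $p$-th power and invoking the Frobenius homomorphism in characteristic $p$, one obtains
$$e_p^p(\underline{x},\underline{x}') = \sum_{l=0}^p e_l^p(\underline{x})\,e_{p-l}^p(\underline{x}').$$
Lemma \ref{lemma-slash-homology-rank-one-mod} kills every summand with $0 < l < p$ modulo acyclic contributions, leaving the clean slash-local rule that a coupon $e_p^p$ on the middle $2p$-strand of the cap-cup diagram equals the sum of $e_p^p$ placed on the left $p$-strand and $e_p^p$ placed on the right $p$-strand. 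Applying this rule once just below the cup gives $[e_p^p \text{ on middle}] = D_1 + D_4$, while applying it once just above the cap gives $[e_p^p \text{ on middle}] = D_3 + D_2$; equating the two expressions and rearranging yields $D_1 - D_2 = D_3 - D_4$ as desired.

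The main obstacle I anticipate is the precise identification in the second step of the diagonal matrix units with the diagrammatic basis elements $D_1$ and $\pm D_2$. The sign prefactor $(-1)^{|\hat\mu|}$ baked into the basis \eqref{eqn-rank-one-mod-end-alg-diag} is engineered exactly so that the pairing rule \eqref{eqn-composition-basis-elts} reproduces the matrix-unit multiplication $E_{\lambda,\mu}\,E_{\mu,\nu} = E_{\lambda,\nu}$, and one must verify carefully that this normalization really makes the sum of the diagonal diagrams equal to the categorical identity rather than some nontrivial central multiple of it. Once this bookkeeping is settled, the remainder of the proof is purely formal and combines the matrix-algebra structure with the Frobenius-enhanced sliding relation derived above.
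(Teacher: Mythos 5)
Your proposal is correct, and its two halves relate differently to the paper's argument. The first equality is obtained exactly as in the paper: by Proposition \ref{prop-formality-end-algebra-rank-two} the slash cohomology is a $2\times 2$ matrix algebra over $\Bbbk[e_p^p,e_{2p}^p]$ with matrix units indexed by $LP(p,p)=\{\emptyset,(p^p)\}$, the identity class is the sum of the two diagonal units (the remaining diagonal units of $\END_{\sym_{2p}}(\s_{p,p})$ form the projector onto the contractible summand and die in slash cohomology), and the sign $(-1)^{|\widehat{\emptyset}|}=(-1)^{p^2}=-1$ produces the minus sign; your bookkeeping here is right. For the second equality, however, you take a genuinely different route. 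The paper simply observes that when both thicknesses are divisible by $p$ the differential on the module generator is trivial (equation \eqref{eqn-d-action-dual-mod-generator}), so the left--right interchange of $\s_{c,d}$ is compatible with $\dif$ and the second equality follows from the first by symmetry. You instead prove the relation $D_1-D_2=D_3-D_4$ directly: raising the sliding identity to the $p$th power gives $e_p^p(\underline{x},\underline{x}')=\sum_{l=0}^{p}e_l^p(\underline{x})e_{p-l}^p(\underline{x}')$ on the nose, and for $0<l<p$ the cocycle $e_l^p(\underline{x})$ has degree $2lp$ not divisible by $2p^2$, hence is of the form $\dif^{p-1}(g)$ by Lemma \ref{lemma-slash-homology-rank-one-mod}; since the vertices and the remaining coupon are $\dif$-closed, the whole cross term is $\dif^{p-1}$ of a diagram and vanishes in $\mH_{/0}$. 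Pushing the middle coupon up and down then gives $D_1+D_4=D_3+D_2$. Both arguments are valid; the paper's is shorter, while yours has the advantage of not relying on the left--right symmetry (and so would survive in situations where the two thicknesses differ), at the cost of the explicit coboundary bookkeeping, which you should spell out as above rather than only citing the lemma.
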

\begin{proof}
Only the second equality requires some comment since the definition of $\s_{c,d}$ was not symmetric in $c,d$ in the presence of the differential. But when both $c,d$ are divisible by $p$, the differential acts on the generator trivially (see equation \eqref{eqn-d-action-dual-mod-generator}), and thus interchanging $c$ and $d$ commutes with differentials. 
\end{proof}

Following \cite{KLMS}, we use a $p$-$p$ crossing diagram for simplification as follows.
\[
\begin{DGCpicture}[scale=0.75]
\DGCstrand[thk1](0,0)(1,1)[$^p$`$_p$]
\DGCstrand[thk1](1,0)(0,1)[$^p$`$_p$]
\end{DGCpicture}:=
\begin{DGCpicture}[scale=0.6]
\DGCPLstrand[thk1](0,0)(1,1.15)[$^{p}$]
\DGCPLstrand[thk1](2,0)(1,1.15)[$^{p}$]
\DGCPLstrand[thk1](1,1.15)(1,1.85)
\DGCPLstrand[thk1](1,1.85)(0,3)[`$_{p}$]
\DGCPLstrand[thk1](1,1.85)(2,3)[`$_{p}$]
\end{DGCpicture} \ .
\]
Now, notice that, for degree reasons, we have
\begin{equation}\label{eqn-thick-nilHecke2}
\begin{DGCpicture}[scale=0.75]
\DGCstrand[thk1](0,0)(1,1)(0,2)[$^p$`$_p$]
\DGCstrand[thk1](1,0)(0,1)(1,2)[$^p$`$_p$]
\end{DGCpicture}
=0
\end{equation}
in the algebra $\END_{\sym_{2p}}(\s_{p,p})$ even before taking slash cohomology. Furthermore, the Reidemeister-III type relation holds in the endomorphism algebra $\END_{\sym_{3p}}(\s_{p,p,p})$:
\begin{equation}\label{eqn-thick-nilHecke3}
\begin{DGCpicture}[scale=0.75]
\DGCstrand[thk1](0,0)(2,2)[$^p$`$_p$]
\DGCstrand[thk1](2,0)(0,2)[$^p$`$_p$]
\DGCstrand[thk1](1,0)(0,1)(1,2)[$^p$`$_p$]
\end{DGCpicture}
=
\begin{DGCpicture}[scale=0.75]
\DGCstrand[thk1](0,0)(2,2)[$^p$`$_p$]
\DGCstrand[thk1](2,0)(0,2)[$^p$`$_p$]
\DGCstrand[thk1](1,0)(2,1)(1,2)[$^p$`$_p$]
\end{DGCpicture}
\ ,
\end{equation}
which is a special case of \cite[Corollary 2.6]{KLMS}. Observe that the relations \eqref{eqn-thick-nilHecke1}--\eqref{eqn-thick-nilHecke3} are just the usual nilHecke algebra relations implemented on thickness-$p$ strands. Recall that, if $a\in \N$, the nilHecke algebra on $a$ letters is by definition $\mathrm{NH}_a\cong \END_{\sym_a}(\pol_a)$ \cite{KL1, Rou2}. It has a diagrammatic presentation given by local generators
\begin{equation}\label{eqn-nilHecke-generator}
\begin{DGCpicture}
\DGCstrand[thn2](0,0)(0,1.5)
\DGCdot{0.75}
\end{DGCpicture}
\quad \quad \quad \quad
\begin{DGCpicture}
\DGCstrand[thn2](0,0)(1.5,1.5)
\DGCstrand[thn2](1.5,0)(0,1.5)
\end{DGCpicture} \ ,
\end{equation}
which have degrees $2$ and $-2$ respectively, quotient the relations
\begin{equation}
\begin{DGCpicture}
\DGCstrand[thn2](0,0)(1,1)
\DGCstrand[thn2](1,0)(0,1)\DGCdot{0.75}
\end{DGCpicture}
-
\begin{DGCpicture}
\DGCstrand[thn2](0,0)(1,1)
\DGCstrand[thn2](1,0)(0,1)\DGCdot{0.25}
\end{DGCpicture}
=
\begin{DGCpicture}
\DGCstrand[thn2](0,0)(0,1)
\DGCstrand[thn2](1,0)(1,1)
\end{DGCpicture}
=
\begin{DGCpicture}
\DGCstrand[thn2](0,0)(1,1)\DGCdot{0.25}
\DGCstrand[thn2](1,0)(0,1)
\end{DGCpicture}
-
\begin{DGCpicture}
\DGCstrand[thn2](0,0)(1,1)\DGCdot{0.75}
\DGCstrand[thn2](1,0)(0,1)
\end{DGCpicture}
\ ,
\end{equation}
\begin{equation}
\begin{DGCpicture}[scale=0.75]
\DGCstrand[thn2](0,0)(1,1)(0,2)
\DGCstrand[thn2](1,0)(0,1)(1,2)
\end{DGCpicture}
=0,
\quad \quad \quad
\begin{DGCpicture}[scale=0.75]
\DGCstrand[thn2](0,0)(2,2)
\DGCstrand[thn2](2,0)(0,2)
\DGCstrand[thn2](1,0)(0,1)(1,2)
\end{DGCpicture}
=
\begin{DGCpicture}[scale=0.75]
\DGCstrand[thn2](0,0)(2,2)
\DGCstrand[thn2](2,0)(0,2)
\DGCstrand[thn2](1,0)(2,1)(1,2)
\end{DGCpicture}
\ .
\end{equation}

In order to make the next map homogeneous, we will adjust the above local generators \eqref{eqn-nilHecke-generator} of nilHecke algebras to be of degree $2p^2$ and $-2p^2$ respectively.

\begin{defn}\label{def-half-thicken}
For any $a\in \N$, the \emph{thickening map} 
\[
\Theta^+:(\mathrm{NH}_a,\dif_0\equiv 0)\lra (\END_{\sym_{ap}}(\s_{(p^a)}),\dif)
\]
is given locally on the diagrammatic generators of $\mathrm{NH}_a$ by
\[
\Theta^+\left(~
\begin{DGCpicture}
\DGCstrand[thn2](0,0)(0,1.5)
\DGCdot{0.75}
\end{DGCpicture}
~\right)
:=
\begin{DGCpicture}
\DGCstrand[thk1](0,0)(0,1.5)[$^p$`$_p$]
\DGCcoupon(-0.35,0.5)(0.35,1){$\scriptsize{e_p^p}$}
\end{DGCpicture} \ ,
\quad \quad \quad
\Theta^+\left(~
\begin{DGCpicture}
\DGCstrand[thn2](0,0)(1.5,1.5)
\DGCstrand[thn2](1.5,0)(0,1.5)
\end{DGCpicture}
~\right):=
\begin{DGCpicture}
\DGCstrand[thk1](0,0)(1.5,1.5)[$^p$`$_p$]
\DGCstrand[thk1](1.5,0)(0,1.5)[$^p$`$_p$]
\end{DGCpicture}
\ .
\]
\end{defn}

\begin{thm}
\label{thm-nil-Hecke-holds} The $p$-DG endomorphism algebra
$ (\END_{\sym_{ap}}(\s_{(p^a)}),\dif) $ is slash-zero formal. 
The thickening map $\Theta^+$ induces a quasi-isomorphism of $p$-DG algebras. 
\end{thm}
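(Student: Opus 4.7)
The plan is to extend the rank-two argument of Proposition \ref{prop-formality-end-algebra-rank-two} to $a$ thick strands of thickness $p$, by reducing slash-zero formality to a combinatorial calculation of the slash cohomology of a Schur-basis $p$-complex $V_a$, and then identifying the result with $\mathrm{NH}_a$ along $\Theta^+$.

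First I would establish an $a$-strand analogue of Lemma \ref{lem-dif-basis-Grassmann-module}, obtained either by iterating the two-strand splitting or by invoking the Schubert basis of the partial flag variety $\mathrm{Fl}(p, 2p, \ldots, ap)$: the module $\s_{(p^a)}$ is compact cofibrant over $\sym_{ap}$ with a $\dif$-stable basis indexed by tuples of Schur polynomials $\pi_{\lambda^{(1)}}, \ldots, \pi_{\lambda^{(a-1)}}$ in appropriate rectangles, yielding a decomposition $\s_{(p^a)} \cong \sym_{ap} \otimes V_a$ as $p$-DG $\sym_{ap}$-modules. Here $V_a$ is a $p$-complex of ungraded dimension $(ap)!/(p!)^a$, with differential acting on each Schur factor via the content formula \eqref{eqn-dif-on-Schur}. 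I would then compute $\mH_/(V_a)$ inductively on $a$: the base case $a=2$ is Lemma \ref{lem-p-Lima}, and for the inductive step I invoke the splitting
\[
\s_{(p^a)} \cong \s_{p,(a-1)p} \otimes_{\sym_{p,(a-1)p}} (\s_p \boxtimes \s_{(p^{a-1})})
\]
to factor $V_a$, as a $p$-complex, as the tensor product of $V_{1,a-1}$ (outer split) and $V_{a-1}$ (inner recursive split). Since Lemma \ref{lem-p-Lima} gives $\dim \mH_/(V_{1,a-1}) = a$ as a trivial $p$-complex, Lemma \ref{lemma-Kunneth} combined with the inductive hypothesis yields $\dim \mH_/(V_a) = a \cdot (a-1)! = a!$, with a basis indexed by chains of $p$-Lima partitions, which is in bijection with $S_a$.

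Once $\mH_/(V_a)$ is understood, the formality proof mirrors Proposition \ref{prop-formality-end-algebra-rank-two}: write $V_a \cong \mH_/(V_a) \oplus P(V_a)$ with $P(V_a)$ contractible, put $H := \sym_{ap} \otimes \mH_/(V_a)$ and $P := \sym_{ap} \otimes P(V_a)$, and observe that the three $P$-involving blocks of $\END_{\sym_{ap}}(\s_{(p^a)})$ arranged as an $(H,P)$-block matrix are all acyclic by cofibrance of $H$ and cofibrance-plus-acyclicity of $P$. Hence $\END_{\sym_{ap}}(H) \hookrightarrow \END_{\sym_{ap}}(\s_{(p^a)})$ is a quasi-isomorphism, and combined with the slash-zero formality of $\sym_{ap}$ (Lemma \ref{lemma-slash-homology-rank-one-mod}) one obtains further quasi-isomorphisms
\[
\mH_/(\sym_{ap}) \otimes \End_\Bbbk(\mH_/(V_a)) \hookrightarrow \sym_{ap} \otimes \End_\Bbbk(\mH_/(V_a)) \cong \END_{\sym_{ap}}(H),
\]
exhibiting the slash cohomology as $\mathrm{Mat}_{a!}(\mH_/(\sym_{ap}))$ with zero differential. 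To identify this with $\mathrm{NH}_a$ via $\Theta^+$, I would use Corollary \ref{cor-slash-cohomology-inclusion-2} to identify $\mH_/(\sym_{ap})$ with the polynomial algebra in the variables $y_i = e_p^p(\underline{x}^{(i)})$, so that the target of $\Theta^+$ becomes $\mathrm{Mat}_{a!}(\sym_a) \cong \mathrm{NH}_a$. The image of $\Theta^+$ lies in $0$-cocycles because $\dif(e_p^p) = 0$ and by \eqref{eqn-d-action-matrix-generator}; it respects the defining nilHecke relations, the $a$-strand analogues of \eqref{eqn-thick-nilHecke1}--\eqref{eqn-thick-nilHecke3} being strictly local; and it sends the $i$th nilHecke dot to $y_i$, precisely the $i$th standard polynomial generator on the right. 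A final match on the permutation matrix-unit basis on both sides completes the argument.

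The main obstacle is the inductive computation of $\mH_/(V_a)$: one must verify that the differential on $V_a$ splits up to null-homotopy as the tensor-product differential on $V_{1,a-1} \otimes V_{a-1}$, despite the Littlewood-Richardson coefficients which a priori couple the outer and inner Schur bases. As in the proof of Lemma \ref{lem-p-Lima}, this can be done by constructing a $\dif$-compatible projection onto the $p$-Lima sub-$p$-complex at each level of the splitting; since $\mH_/(V_{1,a-1})$ is a \emph{trivial} $p$-complex, the Künneth Lemma \ref{lemma-Kunneth} then applies and the cross-terms are absorbed into null-homotopic pieces.
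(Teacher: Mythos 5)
Your proposal is correct and follows essentially the same route as the paper: compact cofibrance of $\s_{(p^a)}$, an inductive decomposition $\s_{(p^a)}\cong \sym_{ap}\otimes H_0\oplus\sym_{ap}\otimes P_0$ with $\dim H_0=a!$ built from the $a=b=1$ case of Proposition \ref{prop-formality-end-algebra-rank-two}, the block-matrix quasi-isomorphism onto $\END_{\sym_{ap}}(H)$, and the identification $\mH_/(\sym_{ap})\cong\sym_a(y_1,\dots,y_a)$ via Corollary \ref{cor-slash-cohomology-inclusion-2} to land on $\mathrm{NH}_a\cong\END_{\sym_a}(\pol_a)$. Your explicit factorization of the induction through $\s_{p,(a-1)p}\otimes_{\sym_{p,(a-1)p}}(\s_p\boxtimes\s_{(p^{a-1})})$ and the K\"unneth Lemma \ref{lemma-Kunneth} is precisely the content of the paper's terse ``induction together with the $a=b=1$ case,'' so no genuinely different idea is involved.
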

\begin{proof}
The nilHecke relations \eqref{eqn-thick-nilHecke1}--\eqref{eqn-thick-nilHecke3} have been established in the discussion before the Theorem, so this homomorphism is well-defined.
To prove that $\END_{\sym_{ap}}(\s_{(p^a)},\dif)$ is slash-zero formal, one uses a similar argument as in Proposition \ref{prop-formality-end-algebra-rank-two}. 

To do so, notice that the $p$-DG module
\[
\s_{(p^a)}= (\sym_p)^{\otimes a}\cdot v_0,\quad \quad \dif(v_0)=0
\]
is compact cofibrant over $\sym_{ap}$. An induction together with the $a=b=1$ case of Proposition \ref{prop-formality-end-algebra-rank-two} shows that
\[
\s_{(p^a)}\cong \sym_{ap}\otimes H_0 \oplus \sym_{ap}\otimes P_0
\]
where $H_0$ is a direct sum of trivial $p$-complexes that has total dimension $\mathrm{dim}(H_0)=a!$, and $P_0$ is an acyclic $p$-complex. It follows that
\[
\sym_{ap}\otimes \END_{\Bbbk}(H_0)\cong \END_{\sym_{ap}}(\sym_{ap}\otimes H_0)\hookrightarrow\END_{\sym_{ap}}(\s_{p^a})
\]
is a quasi-isomorphism of $p$-DG algebras. The slash-zero formality then follows from Lemma \ref{lemma-slash-homology-rank-one-mod}. To directly compare it with the nilHecke algebra on the slash cohomology level, we use that
\[
\iota:\sym_{ap}\lra (\sym_{p})^{\otimes a}
\]
identifies the slash cohomology of $\sym_{ap}$ with the $S_a$-invariant part of $(\mH_/(\sym_p))^{\otimes a}$ (Corollary \ref{cor-slash-cohomology-inclusion-2}). Since
$
(\mH_/(\sym_p))^{\otimes a}\cong \Bbbk[y_1,\dots, y_i]
$
with $y_i= e_p^p(x_{(i-1)p+1},\dots, x_{ip})$, $\mH_/(\sym_{ap})\cong \sym_a(y_1,\dots, y_a)$, and it follows that
\[
\mH_/(\END_{\s_{(p^a)}}) \cong \END_{\sym_a}(\pol_a)\cong \mathrm{NH}_a.
\]
This finishes the proof of the Theorem.
\end{proof}

\begin{rem}
 Theorem \ref{thm-nil-Hecke-holds} promotes Theorem \ref{thm-weak-categorification-half-sl2} into a strong categorification by considering the $p$-DG derived category
\[
\bigoplus_{a\in \N}\mc{D}(\END_{\sym_{ap}}(\s_{(a^p)}))
\]
with its enhanced $p$-DG endomorphism structure (Remark \ref{rmk-enhancement}). It is not hard to show that this category is Morita equivalent to $\mc{D}_{(p)}(\sym)$ (This will be shown in slight more generality in Lemma \ref{lem-reduction-2}.). Composing the Morita equivalence, the natural inclusion $\jmath$ of $\mc{D}_{(p)}(\sym)$ into $\mc{D}(\sym)$ together with $\Theta^+$ gives us a fully-faithful embedding of enhanced $p$-DG derived categories
\[
 \mc{D}(\mathrm{NH},\dif_0):= \bigoplus_{a\in \N}\mc{D}(\mathrm{NH}_a,\dif_0)
\lra \mc{D}(\sym).
\]
It then follows from Theorem \ref{thm-weak-categorification-half-sl2} and Theorem \ref{thm-nil-Hecke-holds} that the natural projection from $\mc{D}(\sym)$ onto $\mc{D}_{(p)}(\mathrm{Sym})\cong \mc{D}(\mathrm{NH},\dif_0)$ categorifies the quantum Frobenius map (Definition \ref{def-quantum-Frob-half}.)
\end{rem}

\begin{rem}
 We expect that a similar construction can be extended to the $\mathfrak{sl}_3$ case by combining Sto\v{s}i\'{c}'s $\mathfrak{sl}_3$-thick calculus (\cite{Stosicsl3} ) with the differential defined in \cite{KQ}, which will then categorify the quantum Frobenius map for the upper half of $\mathfrak{sl}_3$ at a prime root of unity. Likewise, a careful modification of the previous construction in the DG odd nilHecke algebra case \cite{EllisQ} will give rise to a characteristic-zero lifting of the quantum Frobenius map for $\mathfrak{sl}_2$ at a fourth root of unity.
\end{rem}

\begin{rem}\label{rem-weak-thicking-half}
As we have, the direct sum of the maps $\Theta^+$ will give us a \emph{weak $p$-DG monoidal functor}, which becomes a triangulated monoidal functor of enhanced derived categories. Here we use the notion ``weak" to indicate that the map is not a $p$-DG algebra homomorphism. For instance, the relations\eqref{eqn-thick-nilHecke1} of nilHecke algebras only hold in slash cohomology , but on the abelian level the equality only holds up to acyclic idempotents.  However, upon passing to slash-cohomology, the map becomes an isomorphism. Combining the fact that $\END_{\sym_{ap}}(\s_{(p^a)},\dif)$ is slash-zero formal (Theorem \ref{thm-nil-Hecke-holds}), the functors give a ``roof'' (see Remark \ref{rmk-roofs}) of $p$-DG monoidal categories, and hence induces (enhanced) derived equivalences.
\end{rem}

\section{Thick calculus and quantum Frobenius}\label{sec-full-sl2}
In this Section, we define a thickening 2-functor $\Theta$ which is a ``doubled" version of the map $\Theta^+$ from Definition \ref{def-half-thicken}. We will show that the restriction functor along $\Theta$ gives rise to a categorification of the quantum Frobenius map for an idempotented version of quantum $\mf{sl}_2$ at a prime root of unity.

\subsection{Quantum \texorpdfstring{$\mathfrak{sl}_2$}{sl(2)} at a root of unity}
We first recall the definition of some idempotented forms of the generic and root-of-unity quantum $\mf{sl}_2$ over the ring $\mathbb{O}_p$ following \cite[Chapter 36]{Lus4}. Another concise and lucid account can be found in \cite{McG}. Then we will introduce a modified version of Lusztig's quantum Frobenius map for $\mathfrak{sl}_2$, which is defined over the ring $\mathbb{O}_p$. 

\begin{defn}\label{def-big-sl2}
\begin{enumerate}
\item[(i)]The non-unital associative quantum algebra $\dot{U}_{\Q(v)}(\mathfrak{sl}_2)$ is a $\Q(v)$-algebra generated by a family of orthogonal idempotents $\{1_n|n \in \Z\}$, a raising operator $\theta$ and a lowering operator $\vartheta$, subject to the following conditions:
\begin{enumerate}
\item[(i.1)] $ 1_n \cdot 1_{m}=\delta_{n,m}1_{n}$ for any $n, m \in \Z$,
\item[(i.2)] $ \theta 1_{n} = 1_{n+2} \theta $, \quad $ \vartheta 1_{n} = 1_{n-2} \vartheta $,
\item[(i.3)] $ \theta \vartheta 1_{n}-\vartheta \theta1_{n} =[n]_v1_{n}$.
\end{enumerate}
We denote by $\dot{U}_{v}(\mathfrak{sl}_2)$ (or just $\dot{U}_{v}$ for short) the $\Z[v^{\pm}]$-integral subalgebra of $\dot{U}_{\Q(v)}(\mathfrak{sl}_2)$ generated by the \emph{divided power elements} 
\begin{equation}\label{eqn-div-element}
\theta^{(a)}1_n:= \frac{\theta^a 1_n}{[a]_v!}, \quad \quad \vartheta^{(a)}1_n:= \frac{\vartheta^a1_n}{[a]_v!},
\end{equation}
for every $n\in \Z$ and $a\in \N$. 
\item[(ii)] Let $\mathbb{O}_p=\Z[v^{\pm 1}]/(\Psi_p(v^2))$. The $\mathbb{O}_p$-integral form $\dot{U}_{\mathbb{O}_p}$ is obtained by base changing $\dot{U}_v$ along the canonical ring map $\Z[v^{\pm 1}]\lra \mathbb{O}_p$, $v\mapsto q$:
\[
\dot{U}_{\mathbb{O}_p}:=\dot{U}_v\otimes_{\Z[v^{\pm 1}]}\mathbb{O}_p.
\] 
The base changed divided power elements will be denoted by
\[
E^{(a)}1_n:={\theta}^{(a)}1_n\otimes 1,\quad \quad
F^{(a)}1_{n}:=\vartheta^{(a)}1_n\otimes 1.
\]
\item[(iii)] Let $\rho:\Z[v^{\pm 1}]\lra \mathbb{O}_p$ be the ring homomorphism that takes $v$ to $q^{p}$. The $\mathbb{O}_p$-integral idempotented algebra $\dot{U}_\rho$ is obtained from $\dot{U}_v$ along\footnote{As noted in Remark \ref{rem-wrong-grading-choice}, this is not quite the idempotented form for the universal enveloping algebra for $\mf{sl}_2$, but will need a further base change to the cyclotomic field $\mathcal{O}_p$.} $\rho$:
$$\dot{U}_\rho:=\dot{U}_v\otimes_{\Z[v^{\pm 1}],\rho}\mathbb{O}_p. $$
 The base changed divided power generators in this case will be written as
\[
\mathsf{E}^{(a)}1_n:=\theta^{(a)} 1_n\otimes_{\rho} 1, \quad \quad\mathsf{F}^{(a)}1_n:=\vartheta^{(a)} 1_n\otimes_{\rho}1.
\]
\item[(iv)]
The non-unital associative (small) quantum algebra $\dot{u}_{\mathbb{O}_p}:=\dot{u}_{\mathbb{O}_p}(\mathfrak{sl}_2)$ is the subalgebra of $\dot{U}_{\mathbb{O}_p}$ generated by $\{E1_n, F1_n|n\in \Z\}$. 
\end{enumerate}
\end{defn}

\begin{defn}\label{defn-canonical-basis} Lusztig's \emph{canonical basis} $\dot{\mathbb{B}}$ is an additive $\Z[v^{\pm 1}]$-basis for $\dot{U}_v$, which consists of
\begin{itemize}
\item $\theta^{(a)}\vartheta^{(b)}1_n$, where $a,b \in \N$, $n \in \Z$ with $n \leq b-a$;
\item $\vartheta^{(b)}\theta^{(a)}1_n$, where $a,b \in \N$, $n \in \Z$ with $n \geq b-a$,
\end{itemize}
together with the identifications $\theta^{(a)}\vartheta^{(b)}1_{b-a}=\vartheta^{(b)}\theta^{(a)}1_{b-a}$.
\end{defn}

Via base change, the canonical basis $\dot{\mathbb{B}}$ gives rise to bases for $\dot{U}_{\mathbb{O}_p}$ and $\dot{U}_\rho$, which we denote $\dot{\mathbb{B}}_{\mathbb{O}_p}$ and $\mathbb{\dot{B}}_\rho$ respectively.

\begin{defn}\label{def-quantum-Frob}
The \emph{quantum Frobenius map} for $\mathfrak{sl}_2$ at a $p$th root of unity is the idempotented algebra homomorphism defined on the generators by
\[
\mathrm{Fr}: \dot{U}_{\mathbb{O}_p} \lra \dot{U}_\rho,
\]
\[ 
E^{(a)}\1_{n}
\mapsto
\left\{
\begin{array}{ll}
\mathsf{E}^{(a/p)}\1_{n/p} & \mathrm{if}~p~|~a,n,\\
0 & \textrm{otherwise},
\end{array}
\right.
\quad \quad
F^{(a)}\1_{n}
\mapsto
\left\{
\begin{array}{ll}
\mathsf{F}^{(a/p)}\1_{n/p} & \mathrm{if}~p~|~a,n,\\
0 & \textrm{otherwise}.
\end{array}
\right.
\]
Here $\rho$ is the base change ring homomorphism \eqref{eqn-base-change-map}.
\end{defn} 

It is not hard to see that the kernel of $\mathrm{Fr}$ is generated by $\dot{u}_{\mathbb{O}_p}$ as an ideal inside $\dot{U}_{\mathbb{O}_p}$, and thus the quantum Frobenius map fits into the sequence of $\mathbb{O}_p$-algebras
\begin{equation}\label{eqn-Frobenius-sequence}
0 \lra \dot{u}_{\mathbb{O}_p}\lra \dot{U}_{\mathbb{O}_p} \stackrel{\mathrm{Fr}}{\lra}
\dot{U}_\rho \lra 0.
\end{equation}
The ideal generated by $\dot{u}_{\mathbb{O}_p}$ constitutes the kernel of $\mathrm{Fr}$.

\subsection{Thin and thick \texorpdfstring{$\mathfrak{sl}_2$}{sl(2)} calculus}

\paragraph{Thin calculus.} We now recall a version of the thin diagrammatic calculus of Lauda \cite{Lau1} that categorifies $\dot{U}_v$ at a generic value.

\begin{defn}\label{def-u-dot} The $2$-category $\mathcal{U}$ is an additive graded $\Bbbk$-linear category. It has one object for each $n \in \Z$, where $\Z$ is the weight lattice of $\mathfrak{sl}_2$. The $1$-morphisms are (direct sums of grading shifts of) composites of the generating $1$-morphisms $\1_{n+2} \mathcal{E} \1_n$ and $\1_n \mathcal{F} \1_{n+2}$, where $n$ ranges over $\Z$. Each $\1_{n+2} \mathcal{E} \1_n$ will be drawn the same, regardless of the object $n$. One may think that there is a single $1$-morphism $\mathcal{E}$ which increases the weight by $2$ (read from right to left), and a single $1$-morphism $\mc{F}$ that decreases the weight by $2$. We draw the $1$-morphisms as oriented strands.

\begin{align*}
\begin{tabular}{|c|c|c|}
	\hline
	$1$-\textrm{Morphism generator} &
	\begin{DGCpicture}
	\DGCstrand[thn2](0,0)(0,1)
	\DGCdot*>{0.5}
	\DGCcoupon*(0.1,0.25)(1,0.75){$^n$}
	\DGCcoupon*(-1,0.25)(-0.1,0.75){$^{n+2}$}
    \DGCcoupon*(-0.25,1)(0.25,1.15){}
    \DGCcoupon*(-0.25,-0.15)(0.25,0){}
	\end{DGCpicture}&
	\begin{DGCpicture}
	\DGCstrand[thn2](0,0)(0,1)
	\DGCdot*<{0.5}
	\DGCcoupon*(0.1,0.25)(1,0.75){$^{n+2}$}
	\DGCcoupon*(-1,0.25)(-0.1,0.75){$^n$}
    \DGCcoupon*(-0.25,1)(0.25,1.15){}
    \DGCcoupon*(-0.25,-0.15)(0.25,0){}
	\end{DGCpicture} \\ \hline
	\textrm{Name} & $\1_{n+2}\mathcal{E}\1_n$ & $\1_n\mathcal{F}\1_{n+2}$ \\
	\hline
\end{tabular}
\end{align*}

The $2$-morphisms are generated by the following pictures
with prescribed degrees\footnote{One should note that we have taken the liberty to expand the degrees of generating $2$-morphisms by $p^2$ systematically. This is to ensure that the functor $\Theta$ we will define is homogeneous.}.
	
\begin{align*}
\begin{tabular}{|c|c|c|c|c|}
  \hline
  \textrm{Generator} &
  \begin{DGCpicture}
  \DGCstrand[thn2](0,0)(0,1)
  \DGCdot*>{0.95}
  \DGCdot{0.45}
  \DGCcoupon*(0.1,0.25)(1,0.75){$^n$}
  \DGCcoupon*(-1,0.25)(-0.1,0.75){$^{n+2}$}
  \DGCcoupon*(-0.25,1)(0.25,1.15){}
  \DGCcoupon*(-0.25,-0.15)(0.25,0){}
  \end{DGCpicture}
  &
  \begin{DGCpicture}
  \DGCstrand[thn2](0,0)(1,1)
  \DGCdot*>{0.95}
  \DGCstrand[thn2](1,0)(0,1)
  \DGCdot*>{0.95}
  \DGCcoupon*(1.1,0.25)(2,0.75){$^n$}
  \DGCcoupon*(-1,0.25)(-0.1,0.75){$^{n+4}$}
  \DGCcoupon*(-0.25,1)(0.25,1.15){}
  \DGCcoupon*(-0.25,-0.15)(0.25,0){}
  \end{DGCpicture} 
  &
   \begin{DGCpicture}
  \DGCstrand[thn2]/d/(0,0)(1,0)
  \DGCdot*<{-0.25,2}
  \DGCcoupon*(1,-0.5)(1.5,0){$^n$}
  \DGCcoupon*(-0.25,0)(1.25,0.15){}
  \DGCcoupon*(-0.25,-0.65)(1.25,-0.5){}
  \end{DGCpicture}
  &
  \begin{DGCpicture}
  \DGCstrand[thn2](0,0)(1,0)/d/
  \DGCdot*<{0.25,1}
  \DGCcoupon*(1,0)(1.5,0.5){$^n$}
  \DGCcoupon*(-0.25,0.5)(1.25,0.65){}
  \DGCcoupon*(-0.25,-0.15)(1.25,0){}
  \end{DGCpicture}
   \\ \hline
  \textrm{Degree}  & $2p^2$   & $-2p^2$ & $(1-n)p^2$ & $(1+n)p^2$ \\
  \hline
\end{tabular}
\end{align*}

The following set of relations is imposed on the $2$-morphism generators.

\begin{enumerate}
\item[(i)]The \emph{nilHecke relations} are satisfied on upward pointing strands regardless of regions.
\begin{equation}
\begin{DGCpicture}
\DGCstrand[thn2](0,0)(1,1)
\DGCdot>{0.95}
\DGCstrand[thn2](1,0)(0,1)\DGCdot{0.65}
\DGCdot>{0.95}
\end{DGCpicture}
-
\begin{DGCpicture}
\DGCstrand[thn2](0,0)(1,1)
\DGCdot>{0.95}
\DGCstrand[thn2](1,0)(0,1)\DGCdot{0.25}
\DGCdot>{0.95}
\end{DGCpicture}
=
\begin{DGCpicture}
\DGCstrand[thn2](0,0)(0,1)
\DGCdot>{0.95}
\DGCstrand[thn2](1,0)(1,1)
\DGCdot>{0.95}
\end{DGCpicture}
=
\begin{DGCpicture}
\DGCstrand[thn2](0,0)(1,1)\DGCdot{0.25}
\DGCdot>{0.95}
\DGCstrand[thn2](1,0)(0,1)
\DGCdot>{0.95}
\end{DGCpicture}
-
\begin{DGCpicture}
\DGCstrand[thn2](0,0)(1,1)\DGCdot{0.65}\DGCdot>{0.95}
\DGCstrand[thn2](1,0)(0,1)\DGCdot>{0.95}
\end{DGCpicture}
\ ,
\end{equation}
\begin{equation}
\begin{DGCpicture}[scale=0.75]
\DGCstrand[thn2](0,0)(1,1)(0,2)
\DGCdot>{1.95}
\DGCstrand[thn2](1,0)(0,1)(1,2)
\DGCdot>{1.95}
\end{DGCpicture}
=0,
\quad \quad \quad
\begin{DGCpicture}[scale=0.75]
\DGCstrand[thn2](0,0)(2,2)\DGCdot>{1.95}
\DGCstrand[thn2](2,0)(0,2)\DGCdot>{1.95}
\DGCstrand[thn2](1,0)(0,1)(1,2)\DGCdot>{1.95}
\end{DGCpicture}
=
\begin{DGCpicture}[scale=0.75]
\DGCstrand[thn2](0,0)(2,2)\DGCdot>{1.95}
\DGCstrand[thn2](2,0)(0,2)\DGCdot>{1.95}
\DGCstrand[thn2](1,0)(2,1)(1,2)\DGCdot>{1.95}
\end{DGCpicture}
\ .
\end{equation}
\item[(ii)] The \emph{adjunction relations} hold, irrelevant of region labels:
\begin{align} \label{eqn-adjoint}
\begin{DGCpicture}[scale=0.75]
\DGCstrand[thn2](0,0)(0,1)(1,1)(2,1)(2,2)
\DGCdot*<{0.25,1}
\DGCdot*<{1.65,1}
\end{DGCpicture}
~=~
\begin{DGCpicture}[scale=0.75]
\DGCstrand[thn2](0,0)(0,2)
\DGCdot*<{0.35}
\end{DGCpicture}
\ ,
\qquad \qquad
\begin{DGCpicture}[scale=0.75]
\DGCstrand[thn2](2,0)(2,1)(1,1)(0,1)(0,2)
\DGCdot*>{0.35}
\DGCdot*>{1.75}
\end{DGCpicture}
~=~
\begin{DGCpicture}[scale=0.75]
\DGCstrand[thn2](0,0)(0,2)
\DGCdot*>{1.5}
\end{DGCpicture}
\ .
\end{align}
\item[(iii)] For each $n\in \Z$, let $\sigma_n$ be the $2$-morphism
\begin{equation}
\label{eqn-sideways-crossing}
\sigma_n=
\begin{DGCpicture}
\DGCstrand[thn2]/ur/(0,0)(1,1)/ur/
\DGCdot<{0.05}
\DGCstrand[thn2]/ul/(1,0)(0,1)/ul/
\DGCdot>{0.95}
\DGCcoupon*(1,0.25)(1.25,0.75){$n$}
\end{DGCpicture}
~:=~
\begin{DGCpicture}[scale=0.35]
\DGCstrand[thn2]/u/(2,2)(4,4)/u/
\DGCstrand[thn2]/u/(4,2)(2,4)/u/
\DGCstrand[thn2](2,4)(1,5)(0,4)/d/(0,0)/d/
\DGCdot>{4.05,1}
\DGCstrand[thn2]/d/(4,2)(5,1)(6,2)/u/(6,6)/u/
\DGCdot<{1.95,1}
\DGCstrand[thn2]/u/(2,0)(2,2)/u/
\DGCdot>{0.95}
\DGCstrand[thn2]/u/(4,4)(4,6)/u/
\DGCdot>{5.05}
\DGCcoupon*(7,0.5)(8,5.5){$n$}
\end{DGCpicture}
\ .
\end{equation}
Then the following morphisms should be invertible.
\begin{equation}
\label{eqn-invertible-element-1}
\begin{DGCpicture}
\DGCstrand[thn2]/ur/(0,0)(1,1)/ur/
\DGCdot<{0.05}
\DGCstrand[thn2]/ul/(1,0)(0,1)/ul/
\DGCdot>{0.95}
\DGCcoupon*(1,0.25)(1.25,0.75){$n$}
\end{DGCpicture}
+ \sum_{i=0}^{n-1}~
\begin{DGCpicture}[scale=0.6]
\DGCstrand[thn2]/d/(0,0)(1,-1)(2,0)/u/[${}$]
\DGCdot<{-0.5,1}
\DGCdot{-1}[dr]{$_i$}
\DGCcoupon*(2.25,-0.35)(2.75,-1){$n$}
\end{DGCpicture}
\quad 
(n\geq 0),
\quad 
\quad 
\quad
\begin{DGCpicture}
\DGCstrand[thn2]/ur/(0,0)(1,1)/ur/
\DGCdot<{0.05}
\DGCstrand[thn2]/ul/(1,0)(0,1)/ul/
\DGCdot>{0.95}
\DGCcoupon*(1,0.25)(1.25,0.75){$n$}
\end{DGCpicture}
+ \sum_{i=0}^{n-1}~
\begin{DGCpicture}[scale=0.6]
\DGCstrand[thn2]/u/(0,0)(1,1)(2,0)/d/[${}$]
\DGCdot<{0.5,1}
\DGCdot{1}[ul]{$_i$}
\DGCcoupon*(2.25,0.35)(2.75,1){$n$}
\end{DGCpicture}
\quad 
(n\leq 0).
\end{equation}
\end{enumerate}
\end{defn}

\begin{thm}[Khovanov-Lauda, Rouquier]\label{thm-catfn-sl2-generic}
The category $\mc{U}$ categorifies $U_v(\mathfrak{sl}_2)$ at a generic $v$:
\[
K_0(\mc{U}-\mathrm{mod})\cong U_v(\mathfrak{sl}_2).
\] 
Here $v$ is the decategorification of the grading shift functor on $\mc{U}-\mathrm{mod}$. The indecomposable left projective modules over $\mc{U}$ in the set
\[
\left\{
\mc{E}^{(a)}\mc{F}^{(b)}\1_n ,
\mc{F}^{(a)}\mc{E}^{(b)}\1_m|a,b\in \N,~n\leq b-a,~m \geq a-b
\right\}
\]
 categorify Lusztig's canonical basis elements $\dot{\mathbb{B}}$:
 \[
[\mc{E}^{(a)}\mc{F}^{(b)}\1_n]= \theta^{(a)}\vartheta^{(b)}\1_n ,\quad \quad
[\mc{F}^{(a)}\mc{E}^{(b)}\1_m]= \vartheta^{(a)}\theta^{(b)}\1_m, 
 \]
and the modules $\mc{E}^{(a)}\mc{F}^{(b)}\1_{b-a}$, $\mc{F}^{(b)}\mc{E}^{(a)}\1_{b-a}$ are isomorphic for any $a,b\in \N$.
\end{thm}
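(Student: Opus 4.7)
The plan is to construct a $\Z[v^{\pm 1}]$-algebra homomorphism
\[
\gamma:\dot{U}_v(\mathfrak{sl}_2)\lra K_0(\mc{U}\text{-mod})
\]
by sending each idempotent $1_n$ to the class of the identity $1$-morphism $\1_n$, and the divided-power generators $\theta^{(a)}1_n$, $\vartheta^{(a)}1_n$ to $[\mc{E}^{(a)}\1_n]$, $[\mc{F}^{(a)}\1_n]$ respectively. Before attempting injectivity or surjectivity, I would first verify well-definedness of $\gamma$: the only non-obvious defining relation to lift is the commutator $\theta\vartheta\, 1_n - \vartheta\theta\, 1_n = [n]_v\, 1_n$, which is precisely encoded by the invertibility of the sideways $2$-morphisms \eqref{eqn-invertible-element-1}. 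Upon passing to Grothendieck classes, the induced isomorphism $\mc{E}\mc{F}\1_n \cong \mc{F}\mc{E}\1_n \oplus \bigoplus_{i=0}^{n-1}\1_n\{n-1-2i\}$ for $n\geq 0$ (and its dual for $n\leq 0$) yields the categorified $\mathfrak{sl}_2$-commutator. The divided-power relation $\theta^a = [a]_v!\,\theta^{(a)}$ corresponds to the nilHecke decomposition $\mc{E}^a\1_n \cong \mc{E}^{(a)}\1_n\otimes \mathrm{NH}_a$, which matches the graded dimension of $\mathrm{NH}_a$ with the quantum factorial.

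For surjectivity I would show that every indecomposable $1$-morphism in $\mc{U}$ appears, up to grading shift, among
\[
\left\{\mc{E}^{(a)}\mc{F}^{(b)}\1_n,~\mc{F}^{(b)}\mc{E}^{(a)}\1_m~\mid~n \leq b-a,~m \geq a-b\right\},
\]
subject to the identifications $\mc{E}^{(a)}\mc{F}^{(b)}\1_{b-a} \cong \mc{F}^{(b)}\mc{E}^{(a)}\1_{b-a}$. Iteratively applying the $\mathfrak{sl}_2$-commutator decomposition to an arbitrary word in $\mc{E}$'s and $\mc{F}$'s rewrites it as a direct sum of terms with all $\mc{E}$'s on the left of all $\mc{F}$'s (or vice versa), while collecting repeated $\mc{E}$'s and $\mc{F}$'s via the nilHecke decomposition extracts the divided-power summands. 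Indecomposability of each listed $1$-morphism is obtained by computing its endomorphism $2$-algebra and verifying that it is a local graded $\Bbbk$-algebra; this follows from biadjunction together with the bubble calculus implied by the nilHecke and commutator relations.

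The main obstacle is injectivity of $\gamma$, or equivalently ruling out unexpected collapses on $K_0$. The natural approach is to define a semilinear pairing on $K_0(\mc{U}\text{-mod})$ via graded $\Hom$-spaces between the projectives listed above, compare it with the Lusztig-Kashiwara bilinear form on $\dot{U}_v$, and argue that matched non-degenerate bases force $\gamma$ to be bijective. An alternative route, closer in spirit to what will be used later in the paper (Theorem \ref{thm-main}), is to invoke Brundan's simplified generators-and-relations criterion \cite{Brundan2KM} to reduce the verification to an explicit finite list of relations. Finally, the identification of the listed indecomposable projectives with Lusztig's canonical basis $\dot{\mathbb{B}}$ follows by a separate upper-triangularity argument, leveraging the self-duality of each $\mc{E}^{(a)}\mc{F}^{(b)}\1_n$ under the bar involution together with an expansion against the standard monomial basis $\theta^{(a)}\vartheta^{(b)}\1_n$. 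The argument is not carried out in this paper; it is attributed to Khovanov-Lauda \cite{KL3} and Rouquier \cite{Rou2}, as reflected in the statement.
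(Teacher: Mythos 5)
The paper does not prove this theorem: it is imported from the literature, with the remark following it directing the reader to Khovanov--Lauda \cite{Lau1,KL3} and Rouquier \cite{Rou2} for the $K_0$ statement and to \cite[Section 5]{KLMS} for the identification of the listed indecomposable projectives with the canonical basis. Your sketch is a faithful outline of that standard argument (surjectivity via the categorified commutator and nilHecke decompositions, injectivity via the graded $\Hom$-pairing matched against Lusztig's form), and you correctly flag that none of it is carried out here; the only slight imprecision is that Brundan's result \cite{Brundan2KM} concerns the equivalence of the Khovanov--Lauda and Rouquier presentations rather than providing an independent route to the $K_0$ computation itself.
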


\begin{rem}[On the proofs of Theorem \ref{thm-catfn-sl2-generic}]
The original proof of the result using a few more generators and relations is due to Khovanov-Lauda \cite{Lau1,KL3}. The relations presented here are defined by Rouquier \cite{Rou2}. The work of Cautis and Lauda \cite{CauLau} shows that the two definitions are closely related. More recently, it is shown by Brundan in the amazing work \cite{Brundan2KM} that the two presentations of Khovanov-Lauda and Rouquier are equivalent, significantly reducing the amount of relations to check in many situations.

The second part of the Theorem, regarding lifting canonical basis elements to indecomposable $\mc{U}$-modules, can be found in \cite[Section 5]{KLMS}.
\end{rem}

\begin{cor}\label{cor-catfn-classical-enveloping-algebra}
Equip $\mc{U}$ with the zero $p$-differential $\dif_0\equiv 0$. Then the compact derived category of $p$-DG modules over $\mc{U}$
categorifies $\dot{U}_\rho$:
\[
K_0(\mc{D}^c(\mc{U},\dif_0))\cong \dot{U}_\rho.
\]
\end{cor}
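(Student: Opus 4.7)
The strategy I would follow is to apply the base change formula of Corollary \ref{cor-formal-pDGA-K-group} to $\mc{U}$ equipped with the zero $p$-differential, and then invoke the Khovanov-Lauda-Rouquier categorification theorem, paying careful attention to the inflated grading convention. First, since $\dif_0\equiv 0$, the (Hom-algebras of the) $2$-category $\mc{U}$ are trivially slash-zero formal by Example \ref{eg-easy-formal-p-dga}(i): the identity map serves as the required quasi-isomorphism from $\mc{U}$ to $\mH_{/0}(\mc{U})=\mc{U}$. Next, Lemma \ref{lemma-formal-derived-equivalence} and Corollary \ref{cor-formal-pDGA-K-group}, applied componentwise, should produce an isomorphism
\[
K_0(\mc{D}^c(\mc{U},\dif_0)) \;\cong\; K_0^\prime(\mc{U})\otimes_{\Z[v^{\pm 1}]}\mathbb{O}_p,
\]
where $K_0^\prime(\mc{U})$ is the ordinary graded Grothendieck group of the compact derived category of $\mc{U}$-modules and $v$ records the grading shift functor.

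Then I would invoke Theorem \ref{thm-catfn-sl2-generic} to identify $K_0^\prime(\mc{U})\cong \dot{U}_v$ as $\Z[v^{\pm 1}]$-algebras. The key point to verify is how the ``$v$'' appearing in $\dot{U}_v$ maps into $\mathbb{O}_p$: in the standard KL normalization the generating dot has degree $2$ and the grading shift by $\{1\}$ corresponds to $v$, but in Definition \ref{def-u-dot} the $2$-morphism degrees have been rescaled by a factor of $p^2$ (cf.\ the footnote there), so the $v$ of Theorem \ref{thm-catfn-sl2-generic} corresponds in the present conventions to the shift by $\{p^2\}$, and hence to $q^{p^2}\in \mathbb{O}_p$. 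Since $q^{2p}=1$, a direct check gives $q^{p^2}=q^p$ for odd $p$ and $q^{p^2}=1$ for $p=2$, matching the definition of $\rho$ in Definition \ref{def-big-sl2}(iii). Combining these identifications would yield
\[
K_0(\mc{D}^c(\mc{U},\dif_0)) \;\cong\; \dot{U}_v\otimes_{\Z[v^{\pm 1}],\rho}\mathbb{O}_p \;=\; \dot{U}_\rho,
\]
with the canonical basis elements lifting to indecomposable compact projective objects via the second half of Theorem \ref{thm-catfn-sl2-generic}.

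The main obstacle is essentially bookkeeping: one must identify the correct base change ring homomorphism. Had the degree rescaling by $p^2$ not been performed, the naive reading of the formula would give $\dot{U}_{\mathbb{O}_p}$ in place of $\dot{U}_\rho$, which is not what the corollary claims; the factor of $p^2$ in the generator degrees is precisely what forces the base change along $\rho$ rather than the canonical map $v\mapsto q$. A secondary technical point is that $\mc{U}$ is a $2$-category rather than a single $p$-DG algebra, so Corollary \ref{cor-formal-pDGA-K-group} must be interpreted either for the (locally unital) graded $\Bbbk$-algebra $\bigoplus_{n,m\in \Z}\HOM_{\mc{U}}(\1_n,\1_m)$, or separately weight space by weight space; in either case slash-zero formality is preserved because the differential vanishes identically, so the base change formula propagates without incident.
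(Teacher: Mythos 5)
Your proposal is correct and follows essentially the same route as the paper's own (much terser) proof: slash-zero formality is automatic for the zero differential, Corollary \ref{cor-formal-pDGA-K-group} gives the base change of the generic Grothendieck group, and the $p^2$-rescaling of the generator degrees is exactly what replaces the canonical specialization $v\mapsto q$ by $\rho(v)=q^{p^2}=q^p$. Your added bookkeeping about the $p=2$ case and about interpreting the $2$-category locally as a family of graded algebras fills in details the paper leaves implicit, but introduces no new idea.
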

\begin{proof} By our choice of grading in Definition \ref{def-u-dot}, the $2$-morphism generators all have degrees expanded by $p^2$. On the level of Grothendieck group $K_0(\mc{D}^c(\mc{U}),\dif_0)$, this has the effect of specializing all the structural constants involving $v$ in Theorem \ref{thm-catfn-sl2-generic} at $q^{p^2}=q^p\in \mathbb{O}_p$. The isomorphism follows then, for instance, by using Corollary \ref{cor-formal-pDGA-K-group}.
\end{proof}

\paragraph{Thick calculus under differential.} Let us briefly recall a $p$-DG  $2$-category structure on $\mc{\dot{U}}$ defined in \cite{EQ2}. The category $\mc{\dot{U}}$ has been introduced in \cite{KLMS} to describe diagrammatically the Karoubi envelope of $\mc{U}$. As a result, $\mc{\dot{U}}$ is Morita equivalent to $\mc{U}$ and thus categorifies $\dot{U}_v$ as in Theorem \ref{thm-catfn-sl2-generic}. In the presence of a nontrivial differential, the derived category of $(\dot{\mc{U}},\dif)$ has been shown \cite[Theorem 6.2]{EQ2} to categorify the $\mathbb{O}_p$-integral $\dot{U}_{\mathbb{O}_p}$ with divided powers.

\begin{defn}\label{def-u-dot-big}
The $p$-DG $2$-category  $(\dot{\mc{U}},\dif)$ has the underlying $2$-category equivalent to the Karoubi envelope of $\mc{U}$ (Definition \ref{def-u-dot}). It has objects labeled by $n\in \Z$. The $1$-morphisms are monoidally generated by $\1_{n+2a}\mc{E}^{(a)}\1_n$, $\1_{n-2a}\mc{F}^{(a)}\1_n$ for all $a\in \N$ and $n\in \Z$. They are diagrammatically depicted by oriented thick strands of thickness $a$:
\[
\begin{DGCpicture}
\DGCstrand[thk1](0,0)(0,1)[$^a$`]
\DGCdot>{0.5}
\DGCcoupon*(0.1,0.4)(0.8,0.7){$_{n}$}
\DGCcoupon*(-0.2,0.4)(-1.2,0.7){$_{n+2a}$}
\end{DGCpicture}
\quad \quad \quad
\begin{DGCpicture}
\DGCstrand[thk1](0,0)(0,1)[$^a$`]
\DGCdot<{0.5}
\DGCcoupon*(0.1,0.4)(0.8,0.7){$_{n}$}
\DGCcoupon*(-0.2,0.4)(-1.2,0.7){$_{n-2a}$}
\end{DGCpicture}
\ .
\]
The $2$-morphisms are generated by thick diagrams below subject to certain relations that are similar to thin calculus ones. The reader is referred to \cite{KLMS} for the precise relations, but a $p$-DG $2$-category structure on $\dot{\mc{U}}$ is defined by declaring the following differential action on the $2$-morphism generators.

On upward pointing \emph{splitter} and \emph{merger} diagrams, the differential acts by  
\begin{equation}\label{eqn-dif-thick-upwards-splitter}
\dif \left(~
\begin{DGCpicture}[scale=0.75]
\DGCstrand[thk1](0,0)(0,1)[$^{a+b}$]
\DGCstrand[thk1]/ul/(0,1)(-1,2)/ul/[`$_a$]
\DGCdot>{1.95}
\DGCstrand[thk1]/ur/(0,1)(1,2)/ur/[`$_b$]
\DGCdot>{1.95}
\end{DGCpicture}~
\right)
=-b
\begin{DGCpicture}[scale=0.75]
\DGCstrand[thk1](0,0)(0,1)[$^{a+b}$]
\DGCstrand[thk1]/ul/(0,1)(-1,2)/ul/[`$_a$]
\DGCdot>{1.95}
\DGCstrand[thk1]/ur/(0,1)(1,2)/ur/[`$_b$]
\DGCdot>{1.95}
\DGCcoupon(-0.8,1.25)(-0.2,1.65){$\scriptstyle{e_1}$}
\end{DGCpicture}
\ ,
\quad \quad \quad
\dif \left(~
\begin{DGCpicture}[scale=0.75]
\DGCPLstrand[thk1](0,0)(0,-1)[$_{a+b}$]
\DGCdot<{0}
\DGCPLstrand[thk1](0,-1)(-1,-2)[`$^a$]
\DGCPLstrand[thk1](0,-1)(1,-2)[`$^b$]
\end{DGCpicture}~
\right)
=-a
\begin{DGCpicture}[scale=0.75]
\DGCPLstrand[thk1](0,0)(0,-1)[$_{a+b}$]
\DGCdot<{0}
\DGCPLstrand[thk1](0,-1)(-1,-2)[`$^a$]
\DGCPLstrand[thk1](0,-1)(1,-2)[`$^b$]
\DGCcoupon(0.2,-1.3)(0.8,-1.7){$\scriptstyle{e_1}$}
\end{DGCpicture}
\ .
\end{equation}
On oriented \emph{biadjunction maps}, the differential is given by
\begin{equation}\label{eqn-dif-thick-cup-cap-1}
\dif\left(~
\begin{DGCpicture}[scale =0.5]
\DGCstrand[thk1]/u/(0,0)(0.75,1)(1.5,0)/d/
\DGCdot<{0.5,1}[r]{$\scriptstyle{a}$}
\DGCcoupon*(1.5,0.8)(1.7,1){$n$}
\end{DGCpicture}
~\right)
=
(n+a)
\begin{DGCpicture}[scale =0.5]
\DGCstrand[thk1]/u/(0,0)(0.75,1)(1.5,0)/d/
\DGCdot<{0.5,1}[r]{$\scriptstyle{a}$}
\DGCcoupon*(1.5,0.8)(1.7,1){$n$}
\DGCcoupon(1.2,0.2)(1.7,0.5){$\scriptstyle{e_1}$}
\end{DGCpicture}
\ ,
\quad \quad
\dif\left(~
\begin{DGCpicture}[scale =0.5]
\DGCstrand[thk1]/d/(0,0)(0.75,-1)(1.5,0)/u/
\DGCdot<{-0.5,1}[r]{$\scriptstyle{a}$}
\DGCcoupon*(1.5,-0.8)(1.7,-1){$n$}
\end{DGCpicture}
~\right)
=(a-n)
\begin{DGCpicture}[scale =0.5]
\DGCstrand[thk1]/d/(0,0)(0.75,-1)(1.5,0)/u/
\DGCdot<{-0.5,1}[r]{$\scriptstyle{a}$}
\DGCcoupon*(1.5,-0.8)(1.7,-1){$n$}
\DGCcoupon(1.2,-0.2)(1.7,-0.5){$\scriptstyle{e_1}$}
\end{DGCpicture}
\ ,
\end{equation}

\begin{equation}\label{eqn-dif-thick-cup-cap-2}
\dif\left(~
\begin{DGCpicture}[scale =0.5]
\DGCstrand[thk1]/u/(0,0)(0.75,1)(1.5,0)/d/
\DGCdot>{0.5,1}[r]{$\scriptstyle{a}$}
\DGCcoupon*(1.5,0.8)(1.7,1){$n$}
\end{DGCpicture}
~\right)
=
a~~~
\begin{DGCpicture}[scale =0.5]
\DGCstrand[thk1]/u/(0,0)(0.75,1)(1.5,0)/d/
\DGCdot>{0.5,1}[r]{$\scriptstyle{a}$}
\DGCcoupon*(1.5,0.8)(1.7,1){$n$}
\DGCcoupon(1.2,0.2)(1.7,0.5){$\scriptstyle{e_1}$}
\end{DGCpicture}
-a~~~
\begin{DGCpicture}[scale =0.5]
\DGCstrand[thk1]/u/(0,0)(0.75,1)(1.5,0)/d/
\DGCdot>{0.5,1}[r]{$\scriptstyle{a}$}
\DGCcoupon*(1.5,0.8)(1.7,1){$n$}
\DGCcoupon(1.2,0.2)(1.7,0.5){$\scriptstyle{e_1}$}
\DGCbubble[thn1](2.25,0.65){0.3}
\DGCdot<{0.65,L}
\DGCcoupon*(2,0.5)(2.5,0.8){$\scriptsize{1}$}
\end{DGCpicture}
\ ,
\end{equation}

\begin{equation}\label{eqn-dif-thick-cup-cap-3}
\dif\left(~
\begin{DGCpicture}[scale =0.5]
\DGCstrand[thk1]/d/(0,0)(0.75,-1)(1.5,0)/u/
\DGCdot>{-0.5,1}[r]{$\scriptstyle{a}$}
\DGCcoupon*(1.5,-0.8)(1.7,-1){$n$}
\end{DGCpicture}
~\right)
=
a~~~
\begin{DGCpicture}[scale =0.5]
\DGCstrand[thk1]/d/(0,0)(0.75,-1)(1.5,0)/u/
\DGCdot>{-0.5,1}[r]{$\scriptstyle{a}$}
\DGCcoupon*(1.5,-0.8)(1.7,-1){$n$}
\DGCcoupon(1.2,-0.2)(1.7,-0.5){$\scriptstyle{e_1}$}
\end{DGCpicture}
+a~~~
\begin{DGCpicture}[scale =0.5]
\DGCstrand[thk1]/d/(0,0)(0.75,-1)(1.5,0)/u/
\DGCdot>{-0.5,1}[r]{$\scriptstyle{a}$}
\DGCcoupon*(1.5,-0.8)(1.7,-1){$n$}
\DGCcoupon(1.2,-0.2)(1.7,-0.5){$\scriptstyle{e_1}$}
\DGCbubble[thn1](2.25,-0.65){0.3}
\DGCdot<{-0.65,L}
\DGCcoupon*(2,-0.5)(2.5,-0.8){$\scriptsize{1}$}
\end{DGCpicture}
\ ,
\end{equation}
Here the clockwise thickness-$1$ ``bubble''
\begin{equation}\label{equ-degree-2-bubble}
\begin{DGCpicture}
\DGCbubble[thn1](2.25,0.65){0.5}
\DGCdot<{0.65,L}
\DGCcoupon*(2,0.5)(2.5,0.8){$\scriptsize{1}$}
\end{DGCpicture}
:=
\begin{DGCpicture}
\DGCbubble[thn1](0,0){0.5}
\DGCdot<{0,L}
\DGCdot{-0.25,R}[r]{$_n$}
\DGCdot*.{0.25,R}[r]{$n$}
\end{DGCpicture}
\end{equation}
composed of a cup, $n$ dots and a cap, is an element lying inside $\END_{\mc{\dot{U}}}(\1_n)\cong \sym$. The differential action on the $\END_{\mc{\dot{U}}}(\1_n)$ is then determined, as before, according to equation \eqref{eqn-dif-on-Schur}.
\end{defn}

 For any $a,b\in \N$, $i\in \{0,\dots, \mathrm{min}(a,b)\}$, $n\in \Z$ and $\alpha\in P(i,n-i)$, let
\begin{equation}\label{eqn-element-phi-i}
\phi_{\alpha}^{i}
:=
\begin{DGCpicture}[scale=0.3]
\DGCstrand[thk1]/d/(0,8)(0,6)(2,4)(4,6)(4,8)/u/[$_a$`$_b$]
\DGCdot.{5.5,1}[r]{$_i$}
\DGCdot<{7.5,1}
\DGCdot<{7.3,2}
\DGCstrand[thk1]/d/(0,6)(-2,2)(2,0)(4,-2)/d/[`$^{a-i}$]
\DGCdot<{-1.1}
\DGCstrand[thk1]/d/(4,6)(6,2)(2,0)(0,-2)/d/[`$^{b-i}$]
\DGCdot>{-1.4}
\DGCcoupon(1,3)(3,5){$\pi_\alpha$}
\DGCcoupon*(5,6)(7,8){$n$}
\end{DGCpicture}
\in 
\HOM_{\dot{\mc{U}}}(\mc{E}^{(a)}\mc{F}^{(b)}\1_n, \mc{E}^{(a-i)}\mc{F}^{(b-i)}\1_n).
\end{equation}
In particular, when $i=0$, then $\alpha=\emptyset$, and
\begin{equation}\label{eqn-element-phi-0}
\phi^0:=
\begin{DGCpicture}
\DGCstrand[thk1]/ur/(0,0)(1,1)/ur/[$^b$`$_b$]
\DGCdot<{0.05}
\DGCstrand[thk1]/ul/(1,0)(0,1)/ul/[$^a$`$_a$]
\DGCdot>{0.95}
\DGCcoupon*(1,0.25)(1.25,0.75){$n$}
\end{DGCpicture}
~:=~
\begin{DGCpicture}[scale=0.5]
\DGCstrand[thk1]/u/(2,2)(4,4)/u/
\DGCstrand[thk1]/u/(4,2)(2,4)/u/
\DGCstrand[thk1](2,4)(1,5)(0,4)/d/(0,0)/d/[`$^b$]
\DGCdot>{1,1}
\DGCstrand[thk1]/d/(4,2)(5,1)(6,2)/u/(6,6)/u/[`$_b$]
\DGCdot<{5,1}
\DGCstrand[thk1]/u/(2,0)(2,2)/u/[$^a$]
\DGCdot>{0.95}
\DGCstrand[thk1]/u/(4,4)(4,6)/u/[`$_a$]
\DGCdot>{5.05}
\DGCcoupon*(7,3.5)(7.5,4.5){$n$}
\end{DGCpicture}
\in \HOM_{\dot{\mc{U}}}(\mc{E}^{(a)}\mc{F}^{(b)}\1_n, \mc{F}^{(b)}\mc{E}^{(a)}\1_n),
\end{equation}

The reason that we are interested in these special elements in $\dot{\mc{U}}$ is that they consititute ``half'' of the maps in the identity decomposition formula for $\END_{\mc{\dot{U}}}(\mc{E}^{(a)}\mc{F}^{(b)}\1_n)$ (Sto\v{s}i\'{c}'s formula, \cite[Theorem 5.6]{KLMS}). More precisely, there exist
\[
\psi^0 \in \HOM_{\dot{\mc{U}}}(\mc{F}^{(b)}\mc{E}^{(a)}\1_n,\mc{E}^{(a)}\mc{F}^{(b)}\1_n), \quad \quad
\psi_\alpha^i\in \HOM_{\dot{\mc{U}}}(\mc{E}^{(a-i)}\mc{F}^{(b-i)}\1_n,\mc{E}^{(a)}\mc{F}^{(b)}\1_n),
\]
such that $\{\psi_\alpha^i\psi_\alpha^i|0\leq i \leq \mathrm{min}(a,b), \alpha\in P(i,n+a-b-i)\}$ forms a complete family of pairwise orthogonal idempotents in $\END_{\dot{\mc{U}}}(\mc{E}^{(a)}\mc{F}^{(b)}\1_n)$:
\begin{equation}\label{eqn-Stosic-formula-algebraic}
\Id_{\mc{E}^{(a)}\mc{F}^{(b)}\1_n}=\psi^0\phi^0+\sum_{i=1}^{\mathrm{min}(a,b)}\sum_{\alpha\in P(i,n+a-b-i)}\psi_\alpha^i\phi_\alpha^i.
\end{equation}

The next Theorem is proven in \cite[Theorem 6.2]{EQ2}.

\begin{thm}\label{thm-big-sl2} The (enhanced) $p$-DG derived category $\mc{D}(\dot{\mc{U}},\dif)$ categorifies the idempotented $\mathbb{O}_p$-integral quantum group $\dot{U}_{\mathbb{O}_p}$:
\[
K_0(\mc{D}^c(\dot{\mc{U}},\dif))\cong \dot{U}_{\mathbb{O}_p}.
\]
Furthermore, the symbols of the compact cofibrant $p$-DG modules in
\[
\left\{
\mc{E}^{(a)}\mc{F}^{(b)}\1_n,\mc{F}^{(a)}\mc{E}^{(b)}\1_m|n\leq b-a, m\geq a-b
\right\}
\]
are identified with the specialization of Lusztig's canonical basis $\dot{\mathbb{B}}_{\mathbb{O}_p}$. \hfill $\square$
\end{thm}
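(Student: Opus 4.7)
The plan is to leverage Theorem \ref{thm-catfn-sl2-generic}, which categorifies $\dot{U}_v$ via $\mc{U}$ (equivalently, its Karoubi envelope $\dot{\mc{U}}$) at generic $v$, and to upgrade to the $p$-DG setting by analyzing how the differential $\dif$ interacts with the indecomposable $1$-morphisms and their morphism spaces. First, as graded $2$-categories (forgetting $\dif$), $\dot{\mc{U}}$ already categorifies $\dot{U}_v$, with the canonical basis $\dot{\mathbb{B}}$ lifted to the indecomposables $\mc{E}^{(a)}\mc{F}^{(b)}\1_n$ ($n \leq b-a$) and $\mc{F}^{(a)}\mc{E}^{(b)}\1_m$ ($m \geq a-b$) via \cite{KLMS}. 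The passage to $\mathbb{O}_p$ will then reflect the specialization $v \mapsto q$ on the Grothendieck group, induced by the decategorification of the grading shift functor in $\mc{D}(\dot{\mc{U}},\dif)$.

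The next step is to verify that Sto\v{s}i\'{c}'s idempotent decomposition \eqref{eqn-Stosic-formula-algebraic} descends to a direct sum decomposition in $\mc{D}(\dot{\mc{U}},\dif)$. For this I would analyze the differential action on the maps $\phi_\alpha^i$, $\psi_\alpha^i$ built from \eqref{eqn-dif-thick-upwards-splitter}--\eqref{eqn-dif-thick-cup-cap-3}, identifying which of the summands $\mc{E}^{(a-i)}\mc{F}^{(b-i)}\1_n$ carry acyclic rank-one $p$-DG structure (via the parallel with the acyclicity criterion in Lemma \ref{lemma-slash-homology-rank-one-mod}(ii)), and which remain genuine, $\dif$-stable summands. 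In the generic-weight range $n \leq b - a$ (resp.~$n \geq a - b$), the correction terms should become acyclic so that $\mc{E}^{(a)}\mc{F}^{(b)}\1_n$ (resp.~$\mc{F}^{(a)}\mc{E}^{(b)}\1_m$) is indecomposable as an object of the $p$-DG derived category. The boundary identification $\mc{E}^{(a)}\mc{F}^{(b)}\1_{b-a} \cong \mc{F}^{(b)}\mc{E}^{(a)}\1_{b-a}$ should follow directly from the thick calculus of \cite{KLMS}, after checking that the relevant $2$-isomorphism commutes with $\dif$.

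Having identified the compact cofibrant generators, I would compute $K_0(\mc{D}^c(\dot{\mc{U}},\dif))$ following the slash-zero formality philosophy from Section \ref{sec-sym-polynomial}. The endomorphism $p$-DG algebras $\END_{\dot{\mc{U}}}(\mc{E}^{(a)}\mc{F}^{(b)}\1_n)$ should, after discarding acyclic summands, be slash-zero formal with slash cohomology given by an appropriate polynomial algebra in bubble generators, by arguments analogous to Proposition \ref{prop-formality-end-algebra-rank-two} and Theorem \ref{thm-nil-Hecke-holds}. Corollary \ref{cor-formal-pDGA-K-group} then yields the correct $\mathbb{O}_p$-module structure on $K_0$, and the algebra structure (i.e.~the decompositions of $\mc{E}\cdot \mc{E}^{(a)}\1_n$, $\mc{F}\cdot \mc{E}^{(a)}\1_n$, and the Serre-type relations categorifying $\theta\vartheta - \vartheta\theta = [n]_v$) is inherited from the generic case in Theorem \ref{thm-catfn-sl2-generic}, with the quantum integers automatically evaluated under $v \mapsto q \in \mathbb{O}_p$.

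The main obstacle is establishing $\dif$-compatibility of Sto\v{s}i\'{c}'s decomposition: the elements $\phi_\alpha^i$, $\psi_\alpha^i$ are generally not $\dif$-closed, so one must either correct them by null-homotopies to produce genuine $p$-DG idempotents, or argue that the summands which fail to be $\dif$-stable become acyclic in $\mc{D}(\dot{\mc{U}},\dif)$. The combinatorics of when a summand is acyclic is precisely controlled by the rank-one module differentials \eqref{eqn-dif-rank-one-mod} and Lemma \ref{lemma-slash-homology-rank-one-mod}(ii), exactly paralleling how non-$p$-divisible weights became acyclic in Theorem \ref{thm-weak-categorification-half-sl2}. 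This verification is the technical heart of the argument in \cite[Section 6]{EQ2}; once in hand, matching indecomposable symbols to $\dot{\mathbb{B}}_{\mathbb{O}_p}$ is then just bookkeeping against Theorem \ref{thm-catfn-sl2-generic}.
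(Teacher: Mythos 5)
The paper does not actually prove this theorem: it is quoted from \cite[Theorem 6.2]{EQ2} (note the sentence introducing it and the $\square$ closing the statement), so there is no internal argument to measure your proposal against. Judged on its own merits, your outline points in roughly the right direction --- forget $\dif$ to recover the generic categorification of Theorem \ref{thm-catfn-sl2-generic}, then control how the differential interacts with Sto\v{s}i\'{c}'s decomposition --- but several steps are incorrect or incomplete as stated.

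First, the claim that for $n\le b-a$ ``the correction terms become acyclic so that $\mc{E}^{(a)}\mc{F}^{(b)}\1_n$ is indecomposable'' conflates two different phenomena: in that weight range the multiplicity sets $P(i,n+a-b-i)$ in \eqref{eqn-Stosic-formula-algebraic} are empty for $i\geq 1$, so there are no correction terms to kill, while acyclicity of multiplicity $p$-complexes is a phenomenon specific to $p$-divisible weights and thicknesses (Lemma \ref{lem-p-Lima-acyclic-case}, Proposition \ref{prop-invertiblity}), not a generic one. Second, and more seriously, the mechanism by which a direct sum decomposition descends to a sum of symbols in the $p$-DG Grothendieck group is neither ``correcting the idempotents by null-homotopies'' nor blanket acyclicity: since the $\psi^i_\alpha\phi^i_\alpha$ need not be $\dif$-stable, what is actually required (and what \cite{EQ2} establishes) is that the idempotents can be ordered so that $\dif$ is upper-triangular with respect to the decomposition, producing a filtration by $p$-DG submodules whose subquotients are the summands; this is the technical heart and your proposal does not supply it. Third, your $K_0$ computation only addresses spanning. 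Linear independence of the symbols is not ``inherited'' from Theorem \ref{thm-catfn-sl2-generic}, since $K_0$ of a $p$-DG derived category is not automatically the base change of the generic Grothendieck group --- a priori there could be extra relations coming from distinguished triangles with no generic counterpart --- and a separate argument (in \cite{EQ2}, via bilinear-form/Euler-characteristic computations against known categorifications) is needed. Relatedly, the endomorphism algebras $\END_{\dot{\mc{U}}}(\mc{E}^{(a)}\mc{F}^{(b)}\1_n)$ are not in general slash-zero formal with polynomial slash cohomology; that holds only on the $p$-divisible sub-$2$-category, which is the \emph{conclusion} of Theorems \ref{thm-nil-Hecke-holds} and \ref{thm-main}, not an input to Theorem \ref{thm-big-sl2}.
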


We deduce some consequences of Theorem \ref{thm-big-sl2}, which should have been included in \cite{EQ2}.

First off, observe that, without the differential $\dif$, the $2$-category $\mc{\dot{U}}$ is Morita equivalent to $\mc{U}$, so that it also categorifies the generic quantum group $\dot{U}_v$ as in Theorem \ref{thm-catfn-sl2-generic}. Consequently, in the usual derived category $\mc{D}(\dot{\mc{U}}\!-\!\mathrm{mod})$ of modules over $\mc{\dot{U}}$, the objects $\mc{E}\1_n$ $\mc{F}\1_n$, when $n$ ranges over $\Z$, monoidally generate the entire derived category (under $1$-morphism compositions). However, in the presence of $\dif$, we need an extra family of $p$-DG modules, besides $\mc{E}\1_n$ $\mc{F}\1_n$, to generate $\mc{D}(\mc{\dot{U}},\dif)$, namely those in
\begin{equation}
\left\{\mc{E}^{(p)}\1_n, \mc{F}^{(p)}\1_n|n \in \Z\right\}.
\end{equation} 

To show this we do a few reductions.

\begin{lem} \label{lem-reduction-1}
Let $m$ be a natural number, and fix a decomposition $m=ap+r$ such that $a\in \N$ and $r\in \{0,\dots, p-1\}$. The objects $\mc{E}^{(m)}\1_n$ (resp.~$\mc{F}^{(m)}\1_n$) with $n\in \Z$ in the enhanced derived category $\mc{D}(\dot{\mc{U}},\dif)$ are contained in the triangulated subcategory monoidally generated by $\mc{E}^{(ap)}\1_{n+2r}$ and $\mc{E}^{(r)}\1_{n}$ (resp. $\mc{F}^{(ap)}\1_{n-2r}$ and $\mc{F}^{(r)}\1_{n}$). 
\end{lem}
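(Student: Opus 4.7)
The plan is to exhibit $\mc{E}^{(m)}\1_n$ as a direct summand of the $1$-morphism composition $\mc{E}^{(ap)}\mc{E}^{(r)}\1_n$ inside the enhanced derived category $\mc{D}(\dot{\mc{U}},\dif)$. Since this composite tautologically lies in the triangulated subcategory monoidally generated by $\mc{E}^{(ap)}\1_{n+2r}$ and $\mc{E}^{(r)}\1_n$, the assertion for $\mc{E}^{(m)}\1_n$ will follow. The case of $\mc{F}^{(m)}\1_n$ will follow by the mirror-image argument applied to $\mc{F}^{(ap)}\mc{F}^{(r)}\1_n$ using downward-oriented thick strands.

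First I would invoke the KLMS-type decomposition \cite{KLMS} in $\dot{\mc{U}}$ (without differential), which categorifies the quantum-group identity $E^{(ap)} E^{(r)}\1_n = {m \brack r}_v E^{(m)}\1_n$ as a direct sum decomposition
\[
\mc{E}^{(ap)}\mc{E}^{(r)}\1_n \cong \mc{E}^{(m)}\1_n \otimes_{\Bbbk} V,
\]
where $V$ has basis the Schur polynomials $\{\pi_\lambda \mid \lambda \in P(ap, r)\}$, with appropriate degree shifts. Each summand indexed by $\lambda$ is cut out by an idempotent built from a splitter--merger pair decorated by $\pi_\lambda$, in the spirit of the diagrammatic basis of $\END_{\sym_{(a+b)p}}(\s_{ap, bp})$ listed in \eqref{eqn-rank-one-mod-end-alg-diag}.

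Next I would transport the $p$-differential of $\dot{\mc{U}}$ onto $V$. Combining the differential formulas \eqref{eqn-dif-thick-upwards-splitter}--\eqref{eqn-dif-thick-cup-cap-3} on splitters, mergers, and biadjunction units with the Schur polynomial formula \eqref{eqn-dif-on-Schur}, the induced $p$-complex structure on $V$ is given by $\dif(\pi_\lambda) = \sum_{\mu = \lambda + \square} \mathrm{C}(\square)\pi_\mu$, truncated to $P(ap, r)$. The empty partition yields $\pi_\emptyset = 1$, which is a $0$-cocycle (the unique added box has content $0$) and cannot be a coboundary, since each application of $\dif$ strictly increases partition size. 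Hence $\mH_{/0}(V) \neq 0$. Splitting $V \cong \mH_/(V) \oplus P(V)$ as $p$-complexes, the composite becomes quasi-isomorphic to
\[
\mc{E}^{(ap)}\mc{E}^{(r)}\1_n \simeq \left(\mc{E}^{(m)}\1_n \otimes \mH_/(V)\right) \oplus \left(\mc{E}^{(m)}\1_n \otimes P(V)\right),
\]
a nonzero sum of copies of $\mc{E}^{(m)}\1_n$ plus an acyclic complement; in particular, $\mc{E}^{(m)}\1_n$ is a direct summand in $\mc{D}(\dot{\mc{U}}, \dif)$. This final step is a direct analogue of the reasoning used in Proposition \ref{prop-formality-end-algebra-rank-two}, where cofibrance of $\s_{ap, bp}$ over $\sym_{(a+b)p}$ allowed the $p$-DG endomorphism algebra to be reduced to its slash-cohomological core.

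The main obstacle I anticipate is making the KLMS decomposition $\mc{E}^{(ap)}\mc{E}^{(r)}\1_n \cong \mc{E}^{(m)}\1_n \otimes V$ genuinely $p$-DG compatible. The bare merger $\mu: \mc{E}^{(ap)}\mc{E}^{(r)}\1_n \to \mc{E}^{(m)}\1_n$ is already $\dif$-closed, since by \eqref{eqn-dif-thick-upwards-splitter} its differential equals $-ap \cdot e_1 \cdot \mu$ and $p \mid ap$; however, the corresponding splitter in the reverse direction is not $\dif$-closed and requires correction by lower-order Schur-decorated terms. Such corrections are produced by the homological-perturbation machinery built into the slash-cohomology formalism of Section \ref{sec-sym-polynomial}, after which the decomposition becomes valid up to quasi-isomorphism and the conclusion follows.
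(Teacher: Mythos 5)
Your proposal is correct in its essential mechanism and reaches the right conclusion, but it packages the argument differently from the paper. The paper does not decompose the $1$-morphism directly; instead it identifies the $2$-endomorphism algebra of $\mc{E}^{(ap)}\mc{E}^{(r)}\1_n$ with $\END_{\sym_m}(\s_{ap,r}^\vee)$ and invokes the $p$-DG Morita criterion of \cite[Proposition 8.8]{QYHopf}: the functor $\s_{ap,r}^\vee\otimes^{\mathbf{L}}_{\sym_m}(-)$ is a derived equivalence provided the column module $\s_{ap,r}^\vee$ is cofibrant over the endomorphism algebra, and that cofibrance is extracted from the split $p$-DG inclusion $\sym_m\cdot v_{ap,r}\subset\s_{ap,r}$, which holds precisely because $\dif(v_{ap,r})=-ap\,e_1(\underline{x}^\prime)v_{ap,r}=0$. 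Your route --- splitting $\mc{E}^{(m)}\1_n$ off as a direct summand of $\mc{E}^{(ap)}\mc{E}^{(r)}\1_n$ --- rests on exactly the same vanishing (you correctly observe that the merger is $\dif$-closed because $p\mid ap$), and it suffices for the lemma as stated; the paper's version buys a full derived equivalence, which is the form reused in Lemma \ref{lem-reduction-2} and in the other Morita-type reductions.

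Three points in your write-up should be tightened. First, no homological perturbation is needed to make the KLMS decomposition $\dif$-compatible: Lemma \ref{lem-dif-basis-Grassmann-module} already asserts that the Schur-decorated maps form a $\dif$-stable $\sym_m$-basis, i.e.\ $\s_{ap,r}\cong\sym_m\otimes V$ on the nose as $p$-DG modules, with $V$ spanned by $\pi_\lambda$ for $\lambda\in P(r,ap)$ rather than $P(ap,r)$; if you instead work with the dual module $\s_{ap,r}^\vee$, the differential on the corresponding complex is twisted by $-r\,e_1$ and its generator is no longer a cocycle, which is exactly why the paper establishes the splitting for $\s_{ap,r}$ first and only then tensors with the dual. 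Second, your conclusion needs slightly more than $\mH_{/0}(V)\neq 0$: if the class of $\pi_\emptyset$ lived only inside a longer indecomposable $p$-complex summand of $V$, the corresponding piece of $\mc{E}^{(ap)}\mc{E}^{(r)}\1_n$ would be an iterated extension of shifts of $\mc{E}^{(m)}\1_n$, which generates in the wrong direction for the lemma. What saves you is that $\pi_\emptyset$ is the unique lowest-degree element of $V$ and is killed by $\dif$, so it spans a one-dimensional \emph{trivial direct summand of $V$ itself}; you should state this explicitly, as it is the exact diagrammatic counterpart of the paper's split inclusion $\sym_m\cdot v_{ap,r}\subset\s_{ap,r}$. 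Third, the parenthetical claim that the content of the box added to $\pi_\emptyset$ is zero is not what makes $\pi_\emptyset$ a cocycle in the twisted setting, so keep the untwisted module throughout. With these repairs the argument is complete.
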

\begin{proof} We will only prove the result for $\mc{E}$'s, the corresponding proof for $\mc{F}$'s is similar.

To do this, it amounts to show that $\mc{E}^{(m)}\1_n$ is contained in the smallest triangulated subcategory in $\mc{D}(\dot{\mc{U}},\dif)$ that contains $\mc{E}^{(ap)}\mc{E}^{(r)}\1_n$.

Note that, in $(\mc{\dot{U}},\dif)$, the endomorphism algebra of any $1$-morphism $\mc{E}^{(m)}\1_n$ is controlled by $\sym_m$ with the natural differential \eqref{eqn-dif-elementary-function} that is independent of the weight $n$. Likewise, the endomorphism space of $\mc{E}^{(ap)}\mc{E}^{(r)}\1_n$ is controlled, irrelevant of $n$, by the $p$-DG endomorphism algebra $\END_{\sym_{m}}(\s_{ap,r}^\vee)$ \footnote{See Definition \ref{def-Grassmannian-module} for the left $p$-DG $\sym_m$-module $\s_{ap,r}$. Here $\s_{ap,r}^\vee=\HOM_{\sym_m}(\s_{ap,r},\sym_m)$ is the dual module with $\sym_m$ acting on the right. A diagrammatic $p$-DG generator for $\s_{ap,r}^\vee$ is depicted as the splitter in \eqref{eqn-dif-thick-upwards-splitter}.}.
By Lemma \ref{lem-dif-basis-Grassmann-module}, $\s_{ap,r}$ is compact cofibrant over $\sym_m$ and so is its dual, so that 
$$\END_{\sym_{m}}(\s_{ap,r}^\vee)\cong \END_{\sym_{m}}(\s_{ap,r})\cong \s_{ap,r}^\vee\otimes_{\sym_m}\s_{ap,r}$$ 
is isomorphic to a matrix $p$-DG algebra with coefficients in $\sym_m$. Therefore, we have a derived tensor functor
\[
\s_{ap,r}^\vee\otimes^{\mathbf{L}}_{\sym_{m}}(-): \mc{D}(\sym_{m},\dif)\lra \mc{D}(\END_{\sym_{m}}(\s_{ap,r}^\vee),\dif)
\]
One may regard $\s_{ap,r}^\vee$ as a column (left) module over a matrix presentation of $\END_{\sym_{m}}(\s_{ap,r}^\vee)$, while $\s_{ap,r}$ as a row (right) module. We now resort to a $p$-DG Morita criterion established in \cite[Proposition 8.8]{QYHopf}, which tells us that the functor induces an equivalence of $p$-DG derived categories if $\s_{ap,r}^\vee$ is cofibrant as a $p$-DG algebra $\END_{\sym_{m}}(\s_{ap,r}^\vee)$. Once this is shown, the lemma follows.

Now we establish the cofibrance condition. By Definition \ref{def-Grassmannian-module}, $\dif$ acts on the module generator $v_{ap,r}$ of $\s_{ap,r}$ trivially, and thus
$$\sym_{m}\cdot v_{ap,r}\subset \s_{ap,r}$$ 
is a $p$-DG direct summand. Tensoring this split inclusion with $\s_{ap,r}^\vee$ over $\sym_m$ gives us a split inclusion
 \[
\s_{ap,r}^\vee\cong \s_{ap,r}^\vee \otimes_{\sym_{m}} \sym_m\cdot v_{ap,r}\subset \s_{ap,r}^\vee\otimes_{\sym_m}\s_{ap,r}\cong \END_{\sym_m}(\s_{ap,r}^\vee).
 \]
 Therefore, the module $p$-DG module $\s_{ap,r}^\vee$ is a $p$-DG direct summand of the natural rank-one free $p$-DG module over $\END_{\sym_m}(\s_{ap,r}^\vee)$. The cofibrance of $\s_{ap,r}^\vee$ now follows.
\end{proof}

\begin{lem}\label{lem-reduction-2}
Let $a$ be any positive natural number and $r$ be in the set $\{0,1,\dots, p-1\}$.
\begin{enumerate}
\item[(i)] For each $n\in \Z$, the module $\mc{E}^{(ap)}\1_n$ in $\mc{D}(\dot{\mc{U}},\dif)$ is contained in the triangulated subcategory monoidally generated by $\mc{E}^{(p)}\1_k$ with $k\in \{n, n+2p, \dots, n+ap\}$.
\item[(ii)] For each $n\in \Z$, the module $\mc{E}^{(r)}\1_n$ in $\mc{D}(\dot{\mc{U}},\dif)$ is contained in the triangulated subcategory monoidally generated by $\mc{E}\1_k$ with $k\in \{n, n+2,\dots, n+2r\}$.
\end{enumerate}
\end{lem}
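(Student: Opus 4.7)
The plan is to adapt the Morita-theoretic strategy used in Lemma \ref{lem-reduction-1} to iterated composites of divided powers.

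For part (i), compare $\mc{E}^{(ap)}\1_n$ with the horizontal composite $\mc{E}^{(p)}\mc{E}^{(p)}\cdots \mc{E}^{(p)}\1_n$ of $a$ copies. Its endomorphism $p$-DG algebra in $(\dot{\mc{U}},\dif)$ is identified with $\END_{\sym_{ap}}(\s_{(p^a)})$, a matrix $p$-DG algebra with coefficients in $\sym_{ap}$, by the same analysis that appears in the proof of Theorem \ref{thm-nil-Hecke-holds}. The crucial observation, which parallels the one used for $\s_{ap,r}$ in Lemma \ref{lem-reduction-1}, is that the generator $v_0$ of $\s_{(p^a)}$ has trivial differential: the coefficients $-(a_1+\cdots+a_{i-1})$ in the formula of Definition \ref{def-Grassmannian-module} are all multiples of $p$ when $(a_1,\ldots,a_a) = (p,\ldots,p)$, hence vanish in $\Bbbk$. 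Consequently $\sym_{ap}\cdot v_0 \subset \s_{(p^a)}$ is a $p$-DG direct summand; tensoring on the left with $\s_{(p^a)}^\vee$ over $\sym_{ap}$ exhibits $\s_{(p^a)}^\vee$ as a $p$-DG summand of the rank-one free module over $\END_{\sym_{ap}}(\s_{(p^a)}^\vee)$, establishing its cofibrance. The $p$-DG Morita criterion \cite[Proposition 8.8]{QYHopf} then delivers an equivalence $\mc{D}(\sym_{ap},\dif)\simeq \mc{D}(\END_{\sym_{ap}}(\s_{(p^a)}),\dif)$, which sends $\mc{E}^{(ap)}\1_n$ (the rank-one free module) to an object of the triangulated subcategory generated by $\mc{E}^{(p)}\cdots\mc{E}^{(p)}\1_n$. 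Unravelling this composite along the interior weights $n, n+2p, \ldots, n+2(a-1)p$ gives the claimed containment.

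For part (ii), the naive transplant of the argument fails: the generator $v_{(1^r)}$ of $\s_{(1^r)}$ does \emph{not} satisfy $\dif(v_{(1^r)})=0$, since the coefficients $-(i-1)$ appearing in Definition \ref{def-Grassmannian-module} are nonzero in $\Bbbk$ for $0<i\leq r<p$. The hypothesis $r<p$ instead enters through the invertibility of $r!$ in $\Bbbk$. The plan is to induct on $r$, with trivial base $r\in\{0,1\}$. For the inductive step, one shows that in $(\dot{\mc{U}},\dif)$ the composite $\mc{E}\mc{E}^{(r-1)}\1_n$ contains $\mc{E}^{(r)}\1_n$ as a $p$-DG direct summand: concretely, one produces a $\dif$-invariant idempotent of the form $\frac{1}{r!}\,\phi\,\psi$, where $\phi$ and $\psi$ are suitably normalized splitter and merger $2$-morphisms from the thick calculus, the scalar $r!$ being a unit in $\Bbbk$ precisely because $r<p$. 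Combining this splitting with the inductive hypothesis, which places $\mc{E}^{(r-1)}\1_{n+2}$ inside the triangulated subcategory generated by $\mc{E}\1_k$ for $k\in\{n+2,\ldots,n+2r\}$, and adding $\mc{E}\1_n$ itself, delivers $\mc{E}^{(r)}\1_n$ in the desired subcategory.

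The main obstacle is the production, in part (ii), of a $\dif$-invariant splitting idempotent in $\END_{\dot{\mc{U}}}(\mc{E}\mc{E}^{(r-1)}\1_n)$ projecting onto $\mc{E}^{(r)}\1_n$. The raw splitter/merger maps from Definition \ref{def-u-dot-big} carry nontrivial differentials given by dot-insertions, so their composite idempotent $\phi\psi$ is not $\dif$-closed on the nose. The resolution is a careful inspection of the differential action formulas \eqref{eqn-dif-thick-upwards-splitter}--\eqref{eqn-dif-thick-cup-cap-3}: the offending terms are absorbed after dividing by $r!$, using that $r<p$. Everything else, including the unravelling of the composite into the listed basic $\mc{E}\1_k$'s, is routine.
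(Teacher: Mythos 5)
Part (i) of your proposal is correct and is exactly the paper's argument: the author reduces (i) to the cofibrance of $\s_{(p^a)}^\vee$ over $\END_{\sym_{ap}}(\s_{(p^a)}^\vee)$, which follows from $\dif(v_0)=0$ by the same split-summand trick as in Lemma \ref{lem-reduction-1}, followed by the Morita criterion of \cite[Proposition 8.8]{QYHopf}.

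Part (ii), however, contains a genuine gap: the $\dif$-invariant splitting idempotent you postulate does not exist, and no amount of renormalizing by $\tfrac{1}{r!}$ will produce one. Concretely, by Lemma \ref{lem-dif-basis-Grassmann-module} the composite $\mc{E}\mc{E}^{(r-1)}\1_n$ is controlled by the module $\s_{1,r-1}^\vee$, which has the $\dif$-stable $\sym_r$-basis $\{x_1^j v^\vee \mid 0\le j\le r-1\}$ with
\[
\dif(x_1^j v^\vee)=(j-r+1)\,x_1^{j+1}v^\vee .
\]
Since $r<p$, the coefficients $j-r+1$ are nonzero in $\Bbbk$ for all $0\le j\le r-2$, so the multiplicity space is an \emph{indecomposable} $r$-dimensional $p$-complex $U_r$ (neither trivial nor contractible), and $\mc{E}\mc{E}^{(r-1)}\1_n\cong \mc{E}^{(r)}\1_n\otimes_\Bbbk U_r$ as $p$-DG modules. (Already for $r=2$ and $p>2$: $\dif(v^\vee)=-x_1v^\vee\neq 0$, $\dif(x_1v^\vee)=0$.) By Krull--Schmidt for $p$-complexes, such a module has no $p$-DG (nor derived) direct summand isomorphic to a grading shift of $\mc{E}^{(r)}\1_n$; it is an iterated non-split extension of such shifts, and the resulting triangles only show containment in the \emph{wrong} direction ($\mc{E}\mc{E}^{(r-1)}\1_n\in\langle\mc{E}^{(r)}\1_n\rangle$). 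So the inductive step as stated fails. The intended argument is the same Morita-theoretic one as in (i) and Lemma \ref{lem-reduction-1}: identify $\END_{\dot{\mc{U}}}(\mc{E}^{\circ r}\1_n)$ with $(\mathrm{NH}_r,\dif)\cong\END_{\sym_r}(\s_{(1^r)}^\vee)$ and prove that the column module $\s_{(1^r)}^\vee\cong\pol_r$ is cofibrant over it when $r<p$. This is precisely where the hypothesis $r<p$ enters (the Remark after Theorem \ref{thm-4-generators} records that cofibrance fails at $r=p$, which is why $\mc{E}^{(p)}$ must be added as a generator), and since $\dif(v_{(1^r)})\neq 0$ the cofibrance cannot be obtained by the split-summand trick of part (i); it requires the finer analysis of the $p$-DG nilHecke algebra for $r<p$ carried out in \cite{KQ}. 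Your proposal does not supply a substitute for this input.
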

\begin{proof}
The proof is similar to the previous Lemma. For instance, to show (i), one considers the $p$-DG algebras $\sym_{ap}$ and $\END_{\sym_{ap}}(\s_{(p^a)}^\vee)$ (Definition \ref{def-Grassmannian-module}). Then one is further reduced to showing that $\s_{(p^a)}^\vee$ is cofibrant over the endomorphism algebra $\END_{\sym_{ap}}(\s_{(p^a)}^\vee)$. We leave the details as exercises to the reader.
\end{proof}

Now combining these two Lemmas, we see that any $p$-DG module of the form $\mc{E}^{(m)}\1_n$ or $\mc{F}^{(m)}\1_n$, where $m\in \N$ and $n\in \Z$, is contained in the subcategory generated by the modules
$\mc{E}\1_k$, $\mc{E}^{(p)}\1_k$,  $\mc{F}\1_k$, $\mc{F}^{(p)}\1_k$, where $k$ ranges over the weight lattice $\Z$. In particular, applying this observation to the ``canonical basis modules'' of Theorem \ref{thm-big-sl2}, we have proved the following. 

\begin{thm}\label{thm-4-generators}
The (enhanced) $p$-DG derived category $\mc{D}(\dot{\mc{U}},\dif)$ is monoidally generated by the compact cofibrant modules in the collection
\[
\left\{
\mc{E}\1_k,\mc{E}^{(p)}\1_k,\mc{F}\1_k,\mc{F}^{(p)}\1_k|k\in \Z
\right\}.
\]
On the Grothendieck group level, $\dot{U}_{\mathbb{O}_p}$ is generated by elements of the form $E1_k$, $E^{(p)}1_k$, $F1_k$, $F^{(p)}1_k$, where $k$ ranges over the weight lattice $\Z$. \hfill $\square$
\end{thm}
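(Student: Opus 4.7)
The plan is to read the Theorem as an immediate corollary of Theorem \ref{thm-big-sl2} combined with the two reduction Lemmas \ref{lem-reduction-1} and \ref{lem-reduction-2}, chained in the right order. By Theorem \ref{thm-big-sl2}, every compact object of $\mc{D}(\dot{\mc{U}},\dif)$ lies in the thick triangulated hull of the canonical basis modules $\mc{E}^{(a)}\mc{F}^{(b)}\1_n$ and $\mc{F}^{(a)}\mc{E}^{(b)}\1_m$. Hence it suffices to show that every ``single species'' compact module $\mc{E}^{(m)}\1_n$ (respectively $\mc{F}^{(m)}\1_n$), for arbitrary $m\in \N$ and $n\in \Z$, sits in the monoidal triangulated subcategory generated by the four families $\mc{E}\1_k$, $\mc{E}^{(p)}\1_k$, $\mc{F}\1_k$, $\mc{F}^{(p)}\1_k$.

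First, I would write $m=ap+r$ with $0\le r\le p-1$ and invoke Lemma \ref{lem-reduction-1}: this places $\mc{E}^{(m)}\1_n$ in the triangulated subcategory monoidally generated by $\mc{E}^{(ap)}\1_{n+2r}$ and $\mc{E}^{(r)}\1_n$ (and similarly for $\mc{F}$, using the other halves of the lemmas). Then I would apply Lemma \ref{lem-reduction-2}(i) to reduce $\mc{E}^{(ap)}\1_{n+2r}$ further to the subcategory generated by $\mc{E}^{(p)}\1_k$ for $k\in \{n+2r,n+2r+2p,\dots, n+2r+2(a-1)p\}$, and Lemma \ref{lem-reduction-2}(ii) to reduce $\mc{E}^{(r)}\1_n$ with $r<p$ to the subcategory generated by $\mc{E}\1_k$ for $k\in \{n,n+2,\dots, n+2(r-1)\}$. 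Chaining these two reductions together yields exactly the desired monoidal generation by the four families. The $\mc{F}$ side is entirely symmetric.

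For the Grothendieck group assertion, applying $K_0$ to a monoidal generation statement in $\mc{D}^c(\dot{\mc{U}},\dif)$ yields algebra generation in $\dot U_{\mathbb{O}_p}$ under Theorem \ref{thm-big-sl2}. Concretely, the decategorified identity
\[
[\mc{E}^{(ap+r)}\1_n] \;=\; [\mc{E}^{(ap)}\1_{n+2r}]\cdot [\mc{E}^{(r)}\1_n]\cdot(\text{unit factor})
\]
together with the splitting of $[\mc{E}^{(ap)}\1_k]$ into a product of $[\mc{E}^{(p)}\1_\bullet]$'s and the splitting of $[\mc{E}^{(r)}\1_k]$ into a product of $[\mc{E}\1_\bullet]$'s (all up to invertible scalars in $\mathbb{O}_p$, tracked by the quantum binomial coefficients that appear in Lemmas \ref{lem-reduction-1}--\ref{lem-reduction-2}) expresses $E^{(m)}1_n$ as a product of elements in $\{E1_k, E^{(p)}1_k\}$, and symmetrically for $F$.

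I do not expect any real obstacle: both key lemmas have already been proved in the text, and the only thing the proof does is chain them. The only mild bookkeeping is to keep track of which weights appear at each step of the monoidal composition, which is straightforward because the generating $1$-morphisms $\mc{E}^{(c)}$, $\mc{F}^{(c)}$ shift the weight by $\pm 2c$ in a deterministic way, so the required intermediate weight labels are uniquely determined by $m$, $r$ and $n$.
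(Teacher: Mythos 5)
Your proposal is correct and follows essentially the same route as the paper: the paper's proof is precisely the chaining of Lemma \ref{lem-reduction-1} and Lemma \ref{lem-reduction-2} applied to the canonical basis modules of Theorem \ref{thm-big-sl2}, with the Grothendieck group statement obtained by decategorification. Your extra bookkeeping of the intermediate weights is a harmless elaboration of what the paper leaves implicit.
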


\begin{rem}
Of course the Theorem is false if we omit the $p$th divided power generators. This is because $\mc{E}^p\1_k$ is controlled by $(\mathrm{NH}_p,\dif)$, which is an acyclic $p$-DG algebra (see \cite[Section 3.3]{KQ}). It follows that $\mc{E}^{p}\1_k\cong 0$ in $\mc{D}(\mc{\dot{U}},\dif)$, and $\mc{E}^{(p)}\1_k$ could not be a summand of $\mc{E}^p\1_k$. In fact, a closer check of the proof of Lemmas \ref{lem-reduction-1} and \ref{lem-reduction-2} shows that any column module, which we denote as $\mc{P}$, over $(\mathrm{NH}_p,\dif)$ is not cofibrant, so that we do not obtain a derived equivalence
\[
\mc{P}\otimes_{\sym_p}^{\mathbf{L}}(-):\mc{D}(\sym_p)\lra \mc{D}(\mathrm{NH}_p).
\]
\end{rem}

\subsection{Categorification of quantum Frobenius for \texorpdfstring{$\mathfrak{sl_2}$}{sl(2)}}
Let us first define a doubled version $\Theta$ of the thickening map $\Theta^+$ in Theorem \ref{thm-nil-Hecke-holds}. 

\begin{defn}\label{def-thicken-functor-full}
Let $\mc{U}$ be equipped with the zero differential $\dif_0\equiv 0$, while the $p$-DG structure on $\mc{\dot{U}}$ is prescribed by Definition \ref{def-u-dot-big}. The (weak) \emph{thickening $2$-functor} $\Theta$ is the morphism of $p$-DG $2$-categories 
\[
\Theta:(\mc{U},\dif_0) \lra (\mc{\dot{U}},\dif)
\]
 defined as follows.

\begin{itemize}
\item[(i)] On the collection of $1$-morphism generators, let
\[
\Theta
\left(~
\begin{DGCpicture}
	\DGCstrand[thn2](0,0)(0,1)
	\DGCdot*>{0.95}
	\DGCcoupon*(0.1,0.25)(1,0.75){$^n$}
	\DGCcoupon*(-1,0.25)(-0.1,0.75){$^{n+2}$}
    \DGCcoupon*(-0.25,1)(0.25,1.15){}
    \DGCcoupon*(-0.25,-0.15)(0.25,0){}
	\end{DGCpicture}
~\right)
:=
\begin{DGCpicture}
	\DGCstrand[thk1](0,0)(0,1)[$^p$`$_p$]
	\DGCdot*>{0.95}
	\DGCcoupon*(0.1,0.25)(1,0.75){$_{np}$}
	\DGCcoupon*(-1.25,0.25)(-0.1,0.75){$_{(n+2)p}$}
    \DGCcoupon*(-0.25,1)(0.25,1.15){}
    \DGCcoupon*(-0.25,-0.15)(0.25,0){}
	\end{DGCpicture} 
	\ ,
\quad 
\quad
\quad
\Theta\left(~
	\begin{DGCpicture}
	\DGCstrand[thn2](0,0)(0,1)
	\DGCdot*<{0.05}
	\DGCcoupon*(0.1,0.25)(1,0.75){$^{n+2}$}
	\DGCcoupon*(-1,0.25)(-0.1,0.75){$^n$}
    \DGCcoupon*(-0.25,1)(0.25,1.15){}
    \DGCcoupon*(-0.25,-0.15)(0.25,0){}
	\end{DGCpicture} 
~\right)
:=
\begin{DGCpicture}
	\DGCstrand[thk1](0,0)(0,1)[$^p$`$_p$]
	\DGCdot*<{0.05}
	\DGCcoupon*(0.1,0.25)(1.25,0.75){$^{(n+2)p}$}
	\DGCcoupon*(-1,0.25)(-0.1,0.75){$^{np}$}
    \DGCcoupon*(-0.25,1)(0.25,1.15){}
    \DGCcoupon*(-0.25,-0.15)(0.25,0){}
	\end{DGCpicture} 
	\ .
\]
\item[(ii)] On the generating $2$-morphisms of $(\mc{U},\dif_0)$, define
\[
\Theta\left(~
\begin{DGCpicture}
\DGCstrand[thn2](0,0)(0,1.5)
\DGCdot{0.75}
\DGCdot>{1.45}
\end{DGCpicture}
~\right)
:=~~
\begin{DGCpicture}
\DGCstrand[thk1](0,0)(0,1.5)[$^p$`$_p$]\DGCdot>{1.45}
\DGCcoupon(-0.35,0.5)(0.35,1){$_{e_p^p}$}
\end{DGCpicture} \ ,
\quad \quad \quad \quad
\Theta\left(~
\begin{DGCpicture}
\DGCstrand[thn2](0,0)(1.5,1.5)
\DGCdot>{1.45}
\DGCstrand[thn2](1.5,0)(0,1.5)
\DGCdot>{1.45}
\end{DGCpicture}
~\right):=
\begin{DGCpicture}[scale=0.7]
\DGCstrand[thk1](0,0)(1,1.25)[$^p$]
\DGCstrand[thk1](1,1.25)(1,1.75)
\DGCstrand[thk1]/u/(1,1.75)(0,3)/u/[`$_p$]\DGCdot>{2.95}
\DGCstrand[thk1](2,0)(1,1.25)[$^p$]
\DGCstrand[thk1]/u/(1,1.75)(2,3)/u/[`$_p$]
\DGCdot>{2.95}
\end{DGCpicture}
\ ,
\]
\[
\Theta
\left(~
  \begin{DGCpicture}
  \DGCstrand[thn2]/d/(0,0)(1,0)
  \DGCdot*<{-0.25,2}
  \DGCcoupon*(1,-0.5)(1.5,0){$^n$}
  \DGCcoupon*(-0.25,0)(1.25,0.15){}
  \DGCcoupon*(-0.25,-0.65)(1.25,-0.5){}
  \end{DGCpicture}
~\right)
:=
\begin{DGCpicture}[scale =0.5]
\DGCstrand[thk1]/d/(0,0)(0.75,-1)(1.5,0)/u/[$_p$`$_p$]
\DGCdot*<{-0.5,1}[r]{}
\DGCcoupon*(1.5,-0.8)(2,-1.25){$np$}
\end{DGCpicture}
\ ,
\quad \quad \quad
\Theta 
\left(~
  \begin{DGCpicture}
  \DGCstrand[thn2](0,0)(1,0)/d/
  \DGCdot*<{0.25,1}
  \DGCcoupon*(1,0)(1.5,0.5){$^n$}
  \DGCcoupon*(-0.25,0.5)(1.25,0.65){}
  \DGCcoupon*(-0.25,-0.15)(1.25,0){}
  \end{DGCpicture}
~\right)
:=
\begin{DGCpicture}[scale =0.5]
\DGCstrand[thk1]/u/(0,0)(0.75,1)(1.5,0)/d/[$^p$`$^p$]
\DGCdot*<{0.5,2}[r]{}
\DGCcoupon*(1.5,0.7)(1.95,1){$np$}
\end{DGCpicture}
\ .
\]
\end{itemize}
\end{defn}

\begin{rem}
A main goal of this section is to show that, as in Remark \ref{rem-weak-thicking-half}, $\Theta$ induces an isomorphism on the level of slash cohomology of the corresponding $p$-DG categories. As we will see, $\Theta$ preserves the relations of $(\mc{U},\dif_0)$ and $(\dot{\mc{U}}, \dif)$ up to acyclic idempotents, and it induces an equivalence of $\mc{D}(\mc{U},\dif_0)$ onto its image as (enhanced) derived categories (Definition \ref{rmk-enhancement}).
\end{rem}

\begin{lem}\label{lem-theta-onto-cocyle}
The image of the $\Theta$ map on the set of generators consists of $p$-cocycles.
\end{lem}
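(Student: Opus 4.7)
The plan is to verify directly, for each of the four generators of $2$-morphisms of $(\mc{U},\dif_0)$, that its image under $\Theta$ is closed under the differential of $(\dot{\mc{U}},\dif)$. In every case, the key point will be that the computation produces an overall multiplicative factor of $p$ that vanishes in characteristic $p$; there is no deeper geometric content here, only a routine verification made uniform by the weight-doubling $n \mapsto np$ in Definition \ref{def-thicken-functor-full}.

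First I will handle the dot generator, whose image is a thickness-$p$ strand decorated by the coupon $e_p^p \in \sym_p$. Because bare thick strands carry no differential of their own, it suffices to compute $\dif(e_p^p)$ inside $\sym_p$. Applying the Leibniz rule together with equation \eqref{eqn-dif-elementary-function} (noting that for the top elementary symmetric function one has $\dif(e_p) = e_1 e_p$) gives
\[
\dif(e_p^p) \;=\; p\, e_p^{p-1}\, \dif(e_p) \;=\; p\, e_p^{p-1}(e_1 e_p) \;=\; p\, e_1\, e_p^p,
\]
which vanishes in characteristic $p$.

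Next, for the upward crossing generator, its image is the thick $p$-$p$ crossing, defined (following \cite{KLMS}) as the composition of a merger $\sym_{p,p} \to \sym_{2p}$ followed by a splitter $\sym_{2p} \to \sym_{p,p}$. Equation \eqref{eqn-dif-thick-upwards-splitter} shows that the differential of the thickness-$(p,p)$ splitter, respectively merger, acquires an overall multiplicative factor of $-p$ (one of the two thicknesses) and hence vanishes modulo $p$. By the Leibniz rule, so does their composition, and the image crossing is $\dif$-closed.

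Finally, for the two oriented biadjunction generators, $\Theta$ produces the corresponding thickness-$p$ cap and cup in $(\dot{\mc{U}},\dif)$ with the right region relabelled from $n$ to $np$. I will apply equations \eqref{eqn-dif-thick-cup-cap-1}, \eqref{eqn-dif-thick-cup-cap-2}, and \eqref{eqn-dif-thick-cup-cap-3} with $a = p$ and right weight $np$; in each of these four cases, the overall coefficient on the right-hand side is an integral linear combination of $a = p$ and $np$ (so of the form $p(n \pm 1)$ or $\pm p$), and is therefore divisible by $p$. All such expressions vanish in characteristic $p$, which completes the check. Since no genuine obstacle arises at any step, the lemma reduces to these four direct computations.
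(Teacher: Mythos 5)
Your proof is correct and takes the same route as the paper, which simply declares the claim clear from the differential formulas \eqref{eqn-dif-thick-upwards-splitter}--\eqref{eqn-dif-thick-cup-cap-3}; you have merely written out the routine verification that each coefficient ($p\,e_1e_p^p$ for the dot, $-p$ for the splitter/merger, and $np\pm p$ or $\pm p$ for the cups and caps) is divisible by $p$ and hence vanishes.
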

\begin{proof}
This is clear from the the differential action on the thick generators \eqref{eqn-dif-thick-upwards-splitter}--\eqref{eqn-dif-thick-cup-cap-3}.
\end{proof}

The center $\END_{\mc{U}}(\1_n)$ for each object $\1_n$ inside $\mc{U}$ consists of clockwise or counterclockwise thin ``bubbles'' that may be defined in terms of the $1$-morphism generators:
\begin{equation}
\begin{DGCpicture}
\DGCbubble[thn2](0,1){0.5}
\DGCdot<{1.35,L}
\DGCdot{0.65,R}[r]{$_{l+n-1}$}
\DGCdot*.{1.25,R}[r]{$n$}
\end{DGCpicture} \ , 
\quad \quad \quad
\begin{DGCpicture}
\DGCbubble[thn2](0,1){0.5}
\DGCdot>{1.35,L}
\DGCdot{0.65,R}[r]{$_{l-n-1}$}
\DGCdot*.{1.25,R}[r]{$n$}
\end{DGCpicture} \ .
\end{equation}
Here $l\in \N$ labels the degree of the bubble diagram, which equals $2l$ for these depicted. One may identify clockwise bubbles of degree $2l$ with the $l$th complete symmetric function $h_l$ in infinitely many variables, while the degree-$2l$ counterclockwise bubbles with $(-1)^le_l$ (see \cite[Proposition 8.2]{Lau1}). In this way we identify the center $\END_{\mc{U}}(\1_n)$ with the algebra $\sym$ of symmetric functions. A simple consequence of the previous Lemma is that the thickening functor $\Theta$ induces a quasi-isomorphism of the corresponding $p$-DG centers.

\begin{cor}\label{cor-qis-centers}
For each $n\in \Z$, the $2$-functor $\Theta$ induces a quasi-isomrphism of $p$-DG algebras
\[
\Theta: (\END_{\mc{U}}(\1_n),\dif_0) \lra (\END_{\dot{\mc{U}}}(\1_{np}),\dif).
\]
\end{cor}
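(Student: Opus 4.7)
The plan is to identify the centers on both sides with the algebra of symmetric functions and then show that $\Theta$ restricted to centers agrees, up to standard identifications, with the quasi-isomorphism $\Theta_0$ of Remark \ref{rmk-infinite-version}.

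First I would invoke the infinite Grassmannian identification of Khovanov-Lauda (\cite[Proposition 8.2]{Lau1}), adjusted for our dot-degree convention, to write
\[
\END_{\mc{U}}(\1_n) \cong \sym^\prime \cong \Bbbk[e_1^\prime,e_2^\prime,\dots]
\]
with $\mathrm{deg}(e_i^\prime) = 2ip^2$ and zero differential. On the target side, the analogous identification in the thick calculus (see \cite{KLMS,EQ2}) gives
\[
\END_{\dot{\mc{U}}}(\1_{np}) \cong (\sym,\dif),
\]
with the standard $p$-differential of equation \eqref{eqn-dif-elementary-function}. Since the source carries the zero differential, Lemma \ref{lem-theta-onto-cocyle} together with the multiplicativity of $\Theta$ implies $\dif \circ \Theta = \Theta \circ \dif_0 = 0$, so $\Theta$ is automatically a chain map.

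The key step is to compute $\Theta$ on bubble generators. A clockwise thin bubble with $l$ dots on weight $n$ represents (a sign times) $h_l^\prime$ in $\sym^\prime$. Under $\Theta$, each thin dot is replaced by $e_p^p$ on a thickness-$p$ strand and the thin cup and cap become their thickness-$p$ analogues. Using the thick-to-thin bubble conversion of \cite{KLMS} together with the cocycle property of $e_p^p$, I would verify that the resulting thickness-$p$ bubble with $l$ copies of $e_p^p$ inserted represents the class $h_{lp}^p$ in $\mH_/(\sym)$, up to sign. This identifies $\Theta$ on centers, modulo the antipode $\omega_/$ of Remark \ref{rmk-infinite-version}, with the map
\[
\Theta_0 : \sym^\prime \lra \mH_/(\sym), \quad e_i^\prime \mapsto e_{ip}^p,
\]
which is already a quasi-isomorphism of $p$-DG Hopf algebras by that Remark; the corollary then follows.

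The main obstacle is the thick-bubble computation: $\Theta$ does not send thin bubbles directly to thin bubbles but to thick-bubble expressions involving $e_p^p$, so one must carefully convert these via the identities of \cite{KLMS}. A useful sanity check is that $\sym^\prime$ and $\mH_/(\sym) \cong \Bbbk[e_p^p,e_{2p}^p,\ldots]$ are both polynomial algebras on countably many generators with identical graded dimensions; thus once the images of the $e_i^\prime$ are shown to be nonzero and algebraically independent in $\mH_/(\sym)$, the map is forced to be a graded isomorphism in every degree.
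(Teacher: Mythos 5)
Your proposal is correct and follows essentially the same route as the paper: both identify the centers with (suitably regraded) symmetric functions via bubbles, compute the image of a thin bubble under $\Theta$ as a thickness-$p$ bubble decorated by a power of $e_p^p$, convert that thick bubble to a Schur function in infinitely many variables using \cite{KLMS}, and recognize the result as a Lima-partition generator of $\mH_/(\sym)$, matching the quasi-isomorphism $\Theta_0$ of Remark \ref{rmk-infinite-version}. The one computation you defer (the thick-bubble conversion) is exactly the step the paper carries out, identifying the image of the degree-$2l$ clockwise bubble with $\pi_\beta^{\spadesuit}$ for $\beta=((pl)^p)$.
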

\begin{proof}
We show the case when $n\geq 0$, and leave the $n\leq 0$ case to the reader. Under the map $\Theta$, the thin bubble is mapped to the thick bubble carrying a symmetric function as a label:
\[
\Theta
\left(~
\begin{DGCpicture}
\DGCbubble[thn2](0,1){0.5}
\DGCdot<{1.35,L}
\DGCdot{0.65,R}[r]{$_{l+n-1}$}
\DGCdot*.{1.25,R}[r]{$n$}
\end{DGCpicture} 
~\right)=
\begin{DGCpicture}
\DGCbubble[thk1](0,1){0.5}
\DGCdot<{1.35,L}
\DGCdot*.{0.65,L}[r]{$^p$}
\DGCdot*.{1.25,R}[r]{$np$}
\DGCcoupon(0.2,0.85)(0.8,1.15){$_{\pi_l}$}
\end{DGCpicture} \ .
\]
Here the thick strand has thickness $p$, and the polynomial $\pi_l$ in $p$ variables equals the Schur function
\[
\pi_l=\pi_{(p(l+n-1)^p)}(x_1,\dots, x_p)= \left( e_p(x_1,\dots, x_p)^{p}\right)^{l+n-1}=(x_1\cdots x_p)^{p(l+n-1)}.
\]
In \cite[equation (4.29)]{KLMS}, the thick bubbles are identified with Schur functions in infinitely many variables. In their notation, this Schur function corresponds to $\pi_\beta^{\spadesuit}$ whose partition $\beta$ has $p$ equal parts: 
\[
\beta=(p(l+n-1),\dots, p(l+n-1))- ((np-p),\dots, (np-p)) 
= (pl,\dots ,pl),
\]
i.e., this is the Schur function corresponding to the Lima partition $((pl)^p)$. By the Littlewood-Richardson rule and Remark \ref{rmk-infinite-version}, the set $\{\pi_{\mu}|\mu\in LP(p)\}$ consists of $0$-cocycles under the differential \eqref{eqn-dif-on-Schur}, and it forms a set of polynomial generators for $\mH_/(\sym)$. The result follows.
\end{proof}

In fact, a closer check of the proof shows that the effect of $\Theta$ on the center $\END_{\mc{U}}(\1_n)$ agrees with the thickening map $\Theta_0$ on $\sym$ discussed in Remark \ref{rmk-infinite-version}.

\begin{lem}\label{lem-dif-some-splitters}
For the special values $a=b=p$ and $n=kp$, the $p$-differential on $\dot{\mc{U}}$ acts on the elements $\phi^0$ and $\phi_\alpha^i$ ($i=1,\dots, p$) as follows:
\[
\dif\left(~
\begin{DGCpicture}
\DGCstrand[thk1]/ur/(0,0)(1,1)/ur/[$^p$`$_p$]
\DGCdot<{0.05}
\DGCstrand[thk1]/ul/(1,0)(0,1)/ul/[$^p$`$_p$]
\DGCdot>{0.95}
\DGCcoupon*(1,0.25)(1.5,0.75){$kp$}
\end{DGCpicture}
~\right)
= 0,
\quad \quad \quad
\dif\left(~
\begin{DGCpicture}[scale=0.3]
\DGCstrand[thk1]/d/(0,8)(0,6)(2,4)(4,6)(4,8)/u/[$_p$`$_p$]
\DGCdot.{5.5,1}[r]{$_i$}
\DGCdot<{7.5,1}
\DGCdot<{7.3,2}
\DGCstrand[thk1]/d/(0,6)(-2,2)(2,0)(4,-2)/d/[`$^{p-i}$]
\DGCdot<{-1.1}
\DGCstrand[thk1]/d/(4,6)(6,2)(2,0)(0,-2)/d/[`$^{p-i}$]
\DGCdot>{-1.4}
\DGCcoupon(1,3)(3,5){$\pi_\alpha$}
\DGCcoupon*(5,6)(7,8){$kp$}
\end{DGCpicture}
~\right)
=
\sum_{\beta=\alpha+\square}
(\mathrm{C}(\square)+i)
\begin{DGCpicture}[scale=0.3]
\DGCstrand[thk1]/d/(0,8)(0,6)(2,4)(4,6)(4,8)/u/[$_p$`$_p$]
\DGCdot.{5.5,1}[r]{$_i$}
\DGCdot<{7.5,1}
\DGCdot<{7.3,2}
\DGCstrand[thk1]/d/(0,6)(-2,2)(2,0)(4,-2)/d/[`$^{p-i}$]
\DGCdot<{-1.1}
\DGCstrand[thk1]/d/(4,6)(6,2)(2,0)(0,-2)/d/[`$^{p-i}$]
\DGCdot>{-1.4}
\DGCcoupon(1,3)(3,5){$\pi_\beta$}
\DGCcoupon*(5,6)(7,8){$kp$}
\end{DGCpicture}
\ .
\]
\end{lem}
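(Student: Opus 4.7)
The plan is to expand $\phi^0$ and $\phi^i_\alpha$ into vertical compositions of the $2$-morphism generators of $\dot{\mc{U}}$ (splitters, mergers, cups, caps, dots, and the coupon carrying $\pi_\alpha$) and then apply the Leibniz rule using the explicit formulas \eqref{eqn-dif-thick-upwards-splitter}--\eqref{eqn-dif-thick-cup-cap-3}, together with the Schur differential \eqref{eqn-dif-on-Schur}.

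For $\dif(\phi^0)=0$: the sideways crossing \eqref{eqn-element-phi-0} is a vertical composition of rightward cups and caps together with an internal crossing of two thickness-$p$ strands, the latter itself being built from upward splitters and mergers between equal thicknesses $p$. Under the specialisation $a=b=p$ and $n=kp$, every generator in this decomposition carries thickness $p$ and sits inside a region whose weight is a multiple of $p$. Inspection of \eqref{eqn-dif-thick-upwards-splitter}--\eqref{eqn-dif-thick-cup-cap-3} shows that every coefficient arising in the Leibniz expansion has the form $\pm p$ or $\pm(kp\pm p)$, and all such coefficients vanish in $\Bbbk$ since $\chr(\Bbbk)=p$. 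Hence every Leibniz summand is zero and $\dif(\phi^0)=0$.

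For the formula for $\dif(\phi^i_\alpha)$: decompose the picture in \eqref{eqn-element-phi-i} as a splitter $\mc{E}^{(p)}\to \mc{E}^{(p-i)}\mc{E}^{(i)}$ and its downward analogue on the $\mc{F}^{(p)}$ side, a closed internal thickness-$i$ loop built from a rightward cup/cap pair carrying the $i$ dots, the coupon containing $\pi_\alpha$, and symmetric splitter/merger pieces above. Apply Leibniz. The derivative landing on the coupon gives, by \eqref{eqn-dif-on-Schur}, exactly the contribution $\sum_{\beta=\alpha+\square}\mathrm{C}(\square)\,\pi_\beta$. Every other Leibniz summand is indexed by one of the surrounding generators; after specialising the outer regions to multiples of $p$ and the outer thicknesses to $p$, the coefficients coming from the outer splitters, mergers, and the outer cup/cap each contain a factor of $p$ and therefore vanish. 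The only surviving piece is the differential acting on the internal thickness-$i$ cup/cap, whose coefficient reduces, modulo $p$, to the single integer $i$ and whose effect is to insert one factor of $e_1$ into the coupon next to $\pi_\alpha$. Absorbing this via the Pieri rule $e_1\cdot \pi_\alpha=\sum_{\beta=\alpha+\square}\pi_\beta$ turns the surviving Leibniz term into $i\sum_{\beta=\alpha+\square}\pi_\beta$. Adding the two contributions gives the stated identity.

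The main obstacle is the second step: a careful accounting of every Leibniz summand around the interior of $\phi^i_\alpha$, checking that the coefficients with factors of $p$ really do cancel (using, where needed, the bubble corrections appearing in \eqref{eqn-dif-thick-cup-cap-2}--\eqref{eqn-dif-thick-cup-cap-3}), verifying that the surviving net coefficient equals $i$ rather than some other residue class, and confirming that the residual $e_1$ insertion ends up multiplying $\pi_\alpha$ inside the coupon rather than being deflected along a splitter slide or trapped as an isolated bubble in an adjacent region.
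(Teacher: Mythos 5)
Your overall strategy --- expand $\phi^0$ and $\phi^i_\alpha$ into generators and apply the Leibniz rule with \eqref{eqn-dif-thick-upwards-splitter}--\eqref{eqn-dif-thick-cup-cap-3} and \eqref{eqn-dif-on-Schur} --- is exactly what the paper does, and your treatment of $\phi^0$ is fine: there every strand has thickness $p$ and every region label is congruent to $0$ modulo $p$, so each Leibniz coefficient ($-p$ from the splitter/merger inside the crossing, $n'\pm p$ and $\pm p$ from the cups and caps) vanishes in $\Bbbk$.

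The gap is in your accounting for $\dif(\phi^i_\alpha)$. Your claim that ``the coefficients coming from the outer splitters, mergers, and the outer cup/cap each contain a factor of $p$'' is false. The two splitters occurring in $\phi^i_\alpha$ split a thickness-$p$ strand into branches of thickness $p-i$ and $i$, so by \eqref{eqn-dif-thick-upwards-splitter} their Leibniz terms carry coefficients $-i$ or $-(p-i)\equiv i$, neither of which is $0$ in $\Bbbk$ for $1\le i\le p-1$; moreover the $e_1$ they insert lands on the thickness-$(p-i)$ or thickness-$i$ branch depending on the chirality of the splitter, so these terms cannot simply be discarded. Likewise the sideways crossing of the two thickness-$(p-i)$ strands at the bottom of \eqref{eqn-element-phi-i} decomposes into pieces whose coefficients involve $p-i\equiv -i$, not $0$. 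Finally, the thickness-$i$ cup/cap in the middle does \emph{not} sit in a region labelled by a multiple of $p$: peeling off the thickness-$(p-i)$ strands shifts the adjacent weight by $2(p-i)\equiv -2i$, so the coefficient produced by \eqref{eqn-dif-thick-cup-cap-1} is of the form $\pm 2i\pm i$ rather than the bare $i$ you assert. The correct computation has to collect \emph{all} of these $\pm i$-type contributions, slide the various $e_1$ insertions to the coupon using the Grassmannian sliding relation, and check that the net coefficient multiplying $e_1\cdot\pi_\alpha=\sum_{\beta=\alpha+\square}\pi_\beta$ is exactly $+i$; your proposal replaces this cancellation pattern with a vanishing pattern that does not hold, and you yourself flag the bookkeeping as unfinished. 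Until that sum of contributions is actually carried out, the second displayed formula is not proved.
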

\begin{proof}
This follows from a simple computation using the differential formula on the generators \eqref{eqn-dif-thick-upwards-splitter}--\eqref{eqn-dif-thick-cup-cap-3} and the differential action on Schur polynomials \eqref{eqn-dif-on-Schur}.
\end{proof}

The next result will tell us that the $p$-complex appearing naturally in Lemma \ref{lem-dif-some-splitters} is quite easy to handle. One should compare it with Lemma \ref{lem-p-Lima}, and the proof is similar.

\begin{lem}\label{lem-p-Lima-acyclic-case}
For each $i=\{1,\dots, p\}$, define the $p$-complex $$V_i:=\bigoplus_{\alpha\in P(i,kp-i)}\Bbbk\cdot\pi_{\alpha}, \quad \quad
\dif(\pi_{\alpha}):=\sum_{\beta=\alpha+\square}
(\mathrm{C}(\square)+i)\pi_{\beta}.$$
\begin{itemize}
\item[(i)] If $i=p$, then
\[
\mH_/(V_i)=\mH_{/0}(V_i)\cong 
\bigoplus_{\alpha\in LP(p,(k-1)p)}\Bbbk \cdot \pi_\alpha.
\]
\item[(ii)]If $1\leq i\leq p-1$, then the $p$-complex $V_i$ is contractible. 
\end{itemize}
\end{lem}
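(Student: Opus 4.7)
The plan is to identify the $p$-complex $V_i$ with a $p$-complex direct summand of the shifted rank-one module $\s_i(i) = \sym_i\cdot v_i$ introduced in \eqref{eqn-dif-rank-one-mod}, thereby reducing part (ii) to Lemma \ref{lemma-slash-homology-rank-one-mod}(ii) and part (i) to Lemma \ref{lem-p-Lima}.

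First I would combine \eqref{eqn-dif-on-Schur} with the Pieri rule $e_1\pi_\alpha = \sum_{\beta = \alpha + \square}\pi_\beta$ to compute
$$
\dif(\pi_\alpha v_i) \;=\; \dif(\pi_\alpha)v_i + i\, e_1\pi_\alpha v_i \;=\; \sum_{\beta = \alpha + \square}(\mathrm{C}(\square)+i)\,\pi_\beta v_i,
$$
which shows that $V_i$ is canonically isomorphic, as a $p$-complex, to the subspace of $\s_i(i)$ spanned by $\{\pi_\alpha v_i \mid \alpha \in P(i,kp-i)\}$. Next I would verify that the truncation map $\s_i(i) \to V_i$ sending $\pi_\alpha v_i$ to $\pi_\alpha$ when $\alpha \in P(i,kp-i)$ and to $0$ otherwise commutes with $\dif$: the only way that a single-box addition can escape $P(i,kp-i)$ is by extending the first row when $\alpha_1 = kp-i$, but the added box then has content $kp-i$, so that the transition coefficient $\mathrm{C}(\square)+i = kp$ vanishes in $\Bbbk$. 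Hence $V_i$ splits off of $\s_i(i)$ as a $p$-complex direct summand.

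For part (ii), when $1 \leq i \leq p-1$, Lemma \ref{lemma-slash-homology-rank-one-mod}(ii) applied to $\s_i(i)$, with $n = i$ in the form $0\cdot p + i$ so that the parameter $a = i$ lies in the allowed range $\{1,\dots,r\} = \{1,\dots,i\}$, yields $\mH_/(\s_i(i)) = 0$. The summand $V_i$ is then also acyclic, and hence is a contractible $p$-complex.

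For part (i), when $i = p$ the shift $+i$ vanishes modulo $p$, so $V_p$ coincides as a $p$-complex with the object $V_{k-1,1}$ of Lemma \ref{lem-p-Lima}, whose conclusion yields $\mH_/(V_p)\cong \bigoplus_{\alpha \in LP(p,(k-1)p)}\Bbbk\cdot\pi_\alpha$. A direct inspection via \eqref{eqn-dif-on-Schur} shows that every outer corner of a $p$-Lima diagram has content divisible by $p$ (the outer corner positions of the Lima expansion of $\nu$ occur either at $(rp+1,1)$ or at the end of a maximal row of length $cp$, with contents $-rp$ or $cp$ respectively), so each Lima Schur polynomial is a $0$-cocycle. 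This confirms that the slash cohomology of $V_p$ is concentrated in degree zero: $\mH_/(V_p) = \mH_{/0}(V_p)$. No substantive obstacle is anticipated; the only piece of bookkeeping is matching the shift parameter in the rank-one module construction \eqref{eqn-dif-rank-one-mod} with the $+i$ term in the differential defining $V_i$.
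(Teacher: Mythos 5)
Your proposal is correct and follows essentially the same route as the paper: part (i) is reduced to Lemma \ref{lem-p-Lima} via the observation that the shift $+i$ vanishes when $i=p$ (so $V_p=V_{k-1,1}$), and part (ii) is proved by exhibiting $V_i$ as a $p$-complex direct summand of the acyclic module $\s_i(i)$ from Lemma \ref{lemma-slash-homology-rank-one-mod}(ii). Your explicit check that the truncation commutes with $\dif$ (the escaping box in the first row has content $kp-i$, so its coefficient $\mathrm{C}(\square)+i=kp$ vanishes in $\Bbbk$) is exactly the "simple computation" the paper leaves to the reader.
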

\begin{proof}
Part (i) of the Lemma is a special case of Lemma \ref{lem-p-Lima} where we have identified the slash cohomology with the trivial $p$-complex spanned by Lima partitions inside $P(p,kp-p)$.

To show part (ii), we will embed $V_i$ inside the $p$-DG module $\s_i(i)$ (see equation \eqref{eqn-dif-rank-one-mod}) over $\sym_i$ as a $p$-complex direct summand. The latter is acyclic by Lemma \ref{lemma-slash-homology-rank-one-mod}.

Now, define an embedding map
\[
f_i:
V_i\lra \s_i(i)\cong \sym_i\cdot v_i,
\quad \quad
\pi_{\alpha}\mapsto
\pi_\alpha \cdot v_i,
\] 
and a projection map
\[
g_i:\s_i(i) \lra V_i, \quad \quad
\pi_\alpha \cdot v_i \mapsto
\left\{
\begin{array}{cc}
\pi_{\alpha} & \textrm{if}~\alpha\in P(i,kp-i),\\
0 & \textrm{otherwise}.
\end{array}
\right.
\]
A simple computation using the differential action on Schur functions and the Pieri rule
\[
\pi_\alpha\cdot e_1=\sum_{\beta=\alpha+\square}\pi_{\beta}
\]
shows that $g_i\circ f_i=\mathrm{Id}_{U_i}$ as maps of $p$-complexes. The result now follows.
\end{proof}

\begin{rem}
In a similar vein as Remark \ref{rmk-categorifying-binomial-reduction}, Lemma \ref{lem-p-Lima-acyclic-case} gives us a categorification of the quantum binomial specialization
\[
{kp \brack i}_{\mathbb{O}_p}=q^{-i(kp-i)}[\mH_/(V_i)]_{\mathbb{O}_p}= 0
\]  
whenever $i\in \{1,\dots, p-1\}$. Modifying the above proof slightly shows that this is also true for any $i \in \{1,\dots, kp-1\}$ such that $p \nmid i$.
\end{rem}

For the next result, we will crucially utilize the $p$-DG enhancement structure (Remark \ref{rmk-enhancement}) on the derived category $\mc{D}(\dot{\mc{U}},\dif)$.  Namely, for any two objects $n,m\in \Z$ and a finite family  of $1$-morphisms that have the form $\1_n \mc{E}^{(i_1)}\mc{F}^{(j_1)}\dots \mc{E}^{(i_k)}\mc{F}^{(j_k)}\1_m$, where the sequence $(i_1,j_1,\dots, i_k,j_k)$ belongs to a finite index set $I$, the $2$-endomorphism algebra
\[
\END_{\dot{\mc{U}}}\left(\bigoplus_{(i_1,\dots, j_k)\in I}\1_n \mc{E}^{(i_1)}\mc{F}^{(j_1)}\dots \mc{E}^{(i_k)}\mc{F}^{(j_k)}\1_m\right)
\]
has a natural $p$-DG algebra structure coming from the $p$-DG $2$-category $(\dot{\mc{U}},\dif)$. 

\begin{prop}\label{prop-invertiblity}
Let $k\in \N$. The element
\[
{\Phi}_{k}:=
\begin{DGCpicture}
\DGCstrand[thk1]/ur/(0,0)(1.5,1.5)/ur/[$^p$`$_p$]
\DGCdot<{0.05}
\DGCstrand[thk1]/ul/(1.5,0)(0,1.5)/ul/[$^p$`$_p$]
\DGCdot>{1.45}
\DGCcoupon*(1.5,0.55)(2,0.95){$kp$}
\end{DGCpicture}
\bigoplus
\left(\bigoplus_{\alpha\in LP(p,(k-1)p)}
\begin{DGCpicture}
\DGCstrand[thk1]/d/(0,0)(0.75,-1)(1.5,0)/u/[$_p$`$_p$]
\DGCdot*<{-0.5,1}[r]{}
\DGCcoupon*(1.5,-0.8)(2,-1.25){$kp$}
\DGCcoupon(0.4,-1.3)(1.1,-0.7){$\pi_\alpha$}
\end{DGCpicture}\right),
\]
which is regarded as a map of left $p$-DG modules over $\mc{\dot{U}}$:
\[
{\Phi}_{k}: \mc{E}^{(p)}\mc{F}^{(p)}\1_{kp}\lra \mc{F}^{(p)}\mc{E}^{(p)}\1_{kp}\oplus \1_{kp}\{-(k-1)p^2\}\oplus \1_{kp}\{-(k-3)p^2\}\oplus \cdots \oplus \1_{kp}\{(k-1)p^2\} ,
\]
is an invertible morphism in the derived category $\mc{D}(\dot{\mc{U}})$.
\end{prop}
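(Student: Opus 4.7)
The plan is to invoke Sto\v{s}i\'{c}'s complete identity decomposition \eqref{eqn-Stosic-formula-algebraic} for $\mc{E}^{(p)}\mc{F}^{(p)}\1_{kp}$ and observe that $\Phi_k$ is the projection onto the derived-nontrivial summands. Setting $a=b=p$ and $n=kp$, write
\[ \Id=e_0+\sum_{i=1}^p e_i,\qquad e_0=\psi^0\phi^0,\qquad e_i=\sum_{\alpha\in P(i,kp-i)}\psi_\alpha^i\phi_\alpha^i, \]
an orthogonal idempotent decomposition on the abelian level. By Lemma \ref{lem-dif-some-splitters} the differential acts on $\{\phi_\alpha^i\}_{\alpha\in P(i,kp-i)}$ precisely as the $p$-complex $V_i$ of Lemma \ref{lem-p-Lima-acyclic-case}, and an analogous calculation using the biadjunction differentials \eqref{eqn-dif-thick-cup-cap-1}--\eqref{eqn-dif-thick-cup-cap-3} shows that $\dif$ preserves the level-$i$ linear span of the $\psi_\alpha^i$'s. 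Thus $\dif(e_i)$ lies in the level-$i$ block of the Sto\v{s}i\'{c} basis, and since $\sum_i\dif(e_i)=\dif(\Id)=0$ with the level-$i$ blocks linearly independent, each $e_i$ is $\dif$-closed. Consequently each $M_i:=e_i(\mc{E}^{(p)}\mc{F}^{(p)}\1_{kp})$ is a genuine $p$-DG direct summand, isomorphic up to a degree shift to the external tensor product $V_i\otimes_\Bbbk N_i$, where $N_0=\mc{F}^{(p)}\mc{E}^{(p)}\1_{kp}$, $N_p=\1_{kp}$, and $N_i=\mc{E}^{(p-i)}\mc{F}^{(p-i)}\1_{kp}$ for $1\leq i\leq p-1$.

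With the block-tensor form in hand, the derived analysis is immediate from Lemma \ref{lem-p-Lima-acyclic-case}: for $1\leq i\leq p-1$ the factor $V_i$ is contractible, so $M_i\simeq 0$; the factor $V_0=\Bbbk$ is trivial, giving $M_0\cong\mc{F}^{(p)}\mc{E}^{(p)}\1_{kp}$; and $\mH_{/0}(V_p)$ is spanned by the Lima Schur polynomials, giving $M_p\simeq\bigoplus_{\alpha\in LP(p,(k-1)p)}\1_{kp}\{\cdot\}$. The Lima partitions in $P(p,(k-1)p)$ are precisely the $p\times rp$ rectangles with $r\in\{0,\dots,k-1\}$, whose Schur polynomials $\pi_\alpha=(x_1\cdots x_p)^{rp}$ have degree $2rp^2$; combined with the overall degree shift of the undecorated cap $\phi_\emptyset^p$, this recovers the shifts $\1_{kp}\{(2r-k+1)p^2\}$ listed in the statement.

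By construction $\Phi_k=\phi^0\oplus\bigoplus_{\alpha\in LP(p,(k-1)p)}\phi_\alpha^p$ is precisely the composition of the block decomposition $\mc{E}^{(p)}\mc{F}^{(p)}\1_{kp}=\bigoplus_i M_i$ with the projection onto $M_0$ and the canonical quasi-isomorphism $M_p\to\mH_{/0}(V_p)\otimes\1_{kp}$ picking out the Lima Schur classes. Since the intermediate blocks $M_1,\dots,M_{p-1}$ are acyclic, $\Phi_k$ is a quasi-isomorphism in $\mc{D}(\dot{\mc{U}},\dif)$ and hence invertible there. The main obstacle is the diagrammatic verification that $\dif$ preserves the level-$i$ span of the $\psi_\alpha^i$'s: the $\phi$-side of this statement is Lemma \ref{lem-dif-some-splitters}, but the parallel statement on the $\psi$-side is a direct but bookkeeping-heavy application of the biadjunction differential formulas \eqref{eqn-dif-thick-cup-cap-1}--\eqref{eqn-dif-thick-cup-cap-3}. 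Once that block-diagonal identification $M_i\cong V_i\otimes N_i$ is in place, the derived-category conclusions follow routinely from the slash-cohomology analysis of the auxiliary $p$-complexes $V_i$ already carried out in Lemma \ref{lem-p-Lima-acyclic-case}.
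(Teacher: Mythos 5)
Your plan hinges on the claim that the differential preserves the level-$i$ span of the $\psi_\alpha^i$'s, so that each Sto\v{s}i\'{c} idempotent $e_i=\sum_\alpha\psi_\alpha^i\phi_\alpha^i$ is $\dif$-closed and $\mc{E}^{(p)}\mc{F}^{(p)}\1_{kp}$ splits as an honest direct sum of $p$-DG modules $M_i\cong N_i\otimes V_i$. You flag this yourself as the ``main obstacle,'' and it is a genuine gap, not bookkeeping: the $\psi_\alpha^i$ involve cups and caps of thickness $i\in\{1,\dots,p-1\}$ at intermediate weights, where the coefficients in \eqref{eqn-dif-thick-cup-cap-1}--\eqref{eqn-dif-thick-cup-cap-3} do \emph{not} vanish mod $p$ and produce extra bubble terms; there is no analogue of Lemma \ref{lem-dif-some-splitters} supplied for the $\psi$-side. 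More seriously, the general pattern in this framework (see Remark \ref{rem-weak-thicking-half}, and the fact that Sto\v{s}i\'{c}-type decompositions become filtrations rather than direct sums in \cite{EQ2}) is precisely that such idempotents are \emph{not} $\dif$-stable on the abelian level, so the strict splitting $\bigoplus_i M_i$ you build the rest of the argument on should not be expected to exist.

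The paper's proof is designed to avoid exactly this. It uses only the $\phi$-side computation: by Lemma \ref{lem-dif-some-splitters}, each collection $\oplus_{\alpha\in P(i,kp-i)}\phi_\alpha^i$ is a map of $p$-DG modules into the \emph{twisted target} $\mc{F}^{(p-i)}\mc{E}^{(p-i)}\1_{kp}\otimes V_i^*$ (and similarly the non-Lima part of level $p$ maps into $\1_{kp}\otimes W$ with $W$ acyclic). Hence the total map $\widetilde{\Phi}_k=\phi^0\oplus\bigl(\oplus_{i,\alpha}\phi_\alpha^i\bigr)$ is a $p$-DG module map onto $\mc{P}'\oplus\mc{A}$ with $\mc{A}$ acyclic cofibrant, and it is bijective by Sto\v{s}i\'{c}'s formula \eqref{eqn-Stosic-formula-algebraic}; since the target decomposition (unlike the source one) is $\dif$-stable by construction, composing with the projection that kills $\mc{A}$ --- equivalently, using that $\END_{\dot{\mc{U}}}(\mc{P})\hookrightarrow\END_{\dot{\mc{U}}}(\mc{P}\oplus\mc{A})$ is a quasi-isomorphism --- shows $\Phi_k$ is invertible in $\mc{D}(\dot{\mc{U}},\dif)$. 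No control of $\dif(\psi_\alpha^i)$ is required. To repair your write-up, either supply the missing $\psi$-side differential computation in full (and justify that the resulting terms close up in the span of the $\psi_\beta^i$'s), or, better, drop the idempotent-splitting step and run the argument through the target decomposition as above.
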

\begin{proof}For a fixed $i\in \{1,\dots, p-1\}$, let us consider the direct sum of the maps
\[
\left(\oplus_{\alpha\in P(i,kp-i)} \phi^i_{\alpha}\right): 
\mc{E}^{(p)}\mc{F}^{(p)}\1_{kp}\lra \mc{F}^{(p-i)}\mc{E}^{(p-i)}\1_{kp}^{\oplus {kp \brack i}} .
\]
By the differential action on the elements $\phi_\alpha^i$ (Lemma \ref{lem-dif-some-splitters}), we see that $(\oplus_{\alpha\in P(i,kp-i)} \phi^i_{\alpha})$ is in fact a map of $p$-DG modules
\[
\left(\oplus_{\alpha\in P(i,kp-i)} \phi^i_{\alpha}\right): 
\mc{E}^{(p)}\mc{F}^{(p)}\1_{kp}\lra \mc{F}^{(p-i)}\mc{E}^{(p-i)}\1_{kp}\otimes V_i^*.
\]
Here $V_i$ is the $p$-complex defined in Lemma \ref{lem-p-Lima-acyclic-case}, and $V_i^*$ denotes the vector space dual of $V_i$ equipped with the dual $p$-complex structure. Since $V_i$ is contractible by that Lemma, the $p$-DG module $ \mc{F}^{(p-i)}\mc{E}^{(p-i)}\1_{kp}\otimes V_i^*$ is an acyclic cofibrant module over $(\dot{\mc{U}},\dif)$. Thus it is isomorphic to zero in the enhanced derived category $\mc{D}(\dot{\mc{U}},\dif)$.

Likewise, one can assemble together $\left(\oplus_{\alpha\in LP_c} \phi^p_{\alpha}\right)$, where $LP_c$ stands for the complement set of Lima partitions $LP_c:=P(p,(k-1)p) \backslash LP(p,(k-1)p)$, into a $p$-DG module map from $\mc{E}^{(p)}\mc{F}^{(p)}\1_{kp}$ onto $\1_{kp}\otimes W$ for some acyclic $p$-complex $W$.

Now, let
\[
\mc{P}:=
 \mc{E}^{(p)}\mc{F}^{(p)}\1_{kp}\oplus \mc{F}^{(p)}\mc{E}^{(p)}\1_{kp}\oplus \1_{kp}\{-(k-1)p^2\}\oplus \1_{kp}\{-(k-3)p^2\}\oplus \cdots \oplus \1_{kp}\{(k-1)p^2\}
\]
and
\[
\mc{A}:= \oplus_{i=1}^{p-1} \mc{F}^{(p-i)}\mc{E}^{(p-i)}\1_{kp}\otimes V_i^* \oplus
\1_{kp}\otimes W,
\]
the module $\mc{A}$ being an acyclic cofibrant $\mc{\dot{U}}$-module since both $V_i^*$ and $W$ are. The natural inclusion of $p$-DG endomorphism algebras
\[
\END_{\dot{\mc{U}}}(\mc{P})\lra \END_{\dot{\mc{U}}}
\left(
\mc{P}\oplus \mc{A}
\right)
\]
is evidently a quasi-isomorphism of $p$-DG algebras. Therefore the element ${\Phi}_k\in \END_{\dot{\mc{U}}}(\mc{P})$ is homotopic to an invertible element in $\END_{\mc{\dot{U}}}(\mc{P}\oplus \mc{A})$ by Sto\v{s}i\'{c}'s formula \eqref{eqn-Stosic-formula-algebraic}. It now follows that the cocycle ${\Phi}_k$ must also be invertible in $\mH_{/0}^0(\END_{\dot{\mc{U}}}(\mc{P}))$. 
\end{proof}

For $\alpha $ inside the set of $p$-Lima partitions $LP(p,(k-1)p)\subset P(p,(k-1)p)$, the Schur functions $\pi_{\alpha}$ are identified with $e_p^{lp}(x_1,\dots,x_p)$ with $l\in \{0,\dots, k-1\}$. Therefore the map $\Phi_k$ in the Proposition naturally comes from thickening (Definition \ref{def-thicken-functor-full}) of the following element in $\mc{U}$:
\begin{equation}\label{eqn-thin-invertible-elt}
{\Phi}_k=\Theta\left(~
\begin{DGCpicture}
\DGCstrand[thn2]/ur/(0,0)(1,1)/ur/
\DGCdot<{0.05}
\DGCstrand[thn2]/ul/(1,0)(0,1)/ul/
\DGCdot>{0.95}
\DGCcoupon*(1,0.25)(1.25,0.75){$k$}
\end{DGCpicture}
+ \sum_{i=0}^{n-1}~
\begin{DGCpicture}[scale=0.6]
\DGCstrand[thn2]/d/(0,0)(1,-1)(2,0)/u/[${}$]
\DGCdot<{-0.5,1}
\DGCdot{-1}[dr]{$_i$}
\DGCcoupon*(2.25,-0.35)(2.75,-1){$k$}
\end{DGCpicture}
~\right) \ .
\end{equation}

\begin{rem}\label{rmk-thick-inverses} By Theorem \ref{thm-catfn-sl2-generic}, although it is not essential to specify the inverse of $\widetilde{\phi}_{k}$ explicitly, the proof of Proposition \ref{prop-invertiblity} actually tells us what the inverse element is inside the slash cohomology algebra $\mH_{/0}(\END_{\dot{\mc{U}}}(\mc{P}\oplus \mc{A}))\cong \mH_{/0}(\END_{\dot{\mc{U}}}(\mc{P}))$. To see this, consider the element
\[
\widetilde{\Phi}_k:={\phi}^0\oplus \left(\oplus_{i=1}^{p}\oplus_{\alpha\in P(i,kp-i)} \phi^i_{\alpha}\right),
\]
which we know is invertible by Sto\v{s}i\'{c}'s formula \eqref{eqn-Stosic-formula-algebraic}, and the inverse is given by
\[
\widetilde{\Psi}_k={\psi}^0\oplus \left(\oplus_{i=1}^{p}\oplus_{\alpha\in P(i,kp-i)} \psi^i_{\alpha}\right).
\]
By adding some zero factors to $\Phi_k$, we can regard both $\Phi_k$ and $\widetilde{\Phi}_k$ as elements in $\END_{\dot{\mc{U}}}(\mc{P}\oplus \mc{A})$. Then the proof of the Proposition tells us that the maps $\Phi_k$ and $\widetilde{\Phi}_k$ differ by some $0$-coboundary element. Therefore the same must be true for the difference between $\Psi_k$ and $\widetilde{\Psi}_k$: the projection of $\widetilde{\Psi}_k$ onto $\END_{\dot{\mc{U}}}(\mc{P})$ should be a cocyle which serves as the inverse for $\Phi_k$. Indeed, diagrammatically, one sees from \cite[equation (5.11)]{KLMS} that the element $\Psi_k$ equals
\[
\Psi_k =
-\
\begin{DGCpicture}
\DGCstrand[thk1]/ur/(0,0)(1.5,1.5)/ur/[$^p$`$_p$]
\DGCdot>{1.45}
\DGCstrand[thk1]/ul/(1.5,0)(0,1.5)/ul/[$^p$`$_p$]
\DGCdot<{0.05}
\DGCcoupon*(1.5,0.95)(2,0.45){$kp$}
\end{DGCpicture}
\bigoplus
\left(
\bigoplus_{i=1}^{k-1}
\bigoplus_{\beta,\gamma \in LP(p,ip)}
\begin{DGCpicture}(-1)^{\frac{p(p+3)}{2}}c_{\beta,\gamma}^{(p^i)}
\DGCstrand[thk1]/u/(0,0)(0.75,0.75)(1.5,0)/d/[$^p$`$^p$]
\DGCdot*>{0.5,1}[r]{}
\DGCcoupon*(1.5,0.8)(2,1.25){$kp$}
\DGCcoupon(0.4,0.45)(1.1,0.95){$\pi_\beta$}
\DGCbubble[thk1](0.7,1.75){0.65}
\DGCdot>{1.75,R}
\DGCcoupon*(0.2,1.5)(1.2,2){$\pi_{\gamma}$}
\end{DGCpicture}
\right)\ .
\]
Here the counterclockwise thick bubble labelled by $\pi_\gamma$ indicates the Schur function $\pi_\gamma$ regarded as an element in $\END_{\dot{\mc{U}}}(\1_{kp})$, and the number $c^{(p^i)}_{\beta, \gamma}$ is the usual Littlewood-Richardson coefficient. When both $\beta$, $\gamma$ are Lima partitions inside $P(p,ip)$, $c^{(p^i)}_{\beta,\gamma}=1$ if they add up to the full $p\times ip$ rectangle, and equals zero otherwise.  It then follows by the differential formula \eqref{eqn-dif-thick-upwards-splitter}--\eqref{eqn-dif-thick-cup-cap-3} that $\Phi_k$ is a cocycle, and it comes from thickening the inverse of the element in equation \eqref{eqn-thin-invertible-elt}.
\end{rem}

\begin{defn}\label{def-full-sl2-subcategory} The $p$-DG $2$-category $\mc{D}_{(p)}(\dot{\mc{U}},\dif)$ is the full $2$-subcategory of $\mc{D}(\dot{\mc{U}},\dif)$ generated by the $1$-morphisms $\1_{(n+2)p}\mc{E}^{(p)}\1_{np}$ and $\1_{(n-2)p}\mc{F}^{(p)}\1_{np}$, where $n\in \Z$. Here both derived categories are considered with their $p$-DG enhanced structure.
\end{defn}

Below, we will be using a derived restriction functor along $\Theta$. We briefly discuss how restriction functors behave in this situation, which is a slight modification of the $p$-DG algebra case. For more details, see \cite[Section 8]{QYHopf}. To do so, notice that $\mc{D}(\mc{\dot{U}},\dif)$ is locally a family of derived categories of $p$-DG algebras with their $p$-DG enhanced structure. 
Now $\Theta$ induces a map of $p$-DG algebras that sends any local identity element of $\mc{U}$, say, $\1_{n+2}\mc{E}\1_n$, onto the idempotent $\1_{(n+2)p}\mc{E}^{(p)}\1_{np}$. Thus we are locally reduced to the following situation: there is a map of $p$-DG algebras $f: (A,\dif_A)\lra (B,\dif_B)$, but $f(1_A)=\epsilon$ is not necessarily the identity of $B$, but an idempotent such that $\dif_B(\epsilon)=0$. In any case we have an adjoint pair of derived functors given by \emph{induction} and \emph{restriction} along $f$:
\begin{equation}\label{eqn-derived-induction}
f^* : \mc{D}(A,\dif_A)\lra \mc{D}(B,\dif_B), \quad f^* (M):=B\epsilon\otimes^{\mathbf{L}}_A M,
\end{equation}
\begin{equation}\label{eqn-derived-restriction}
f_*: \mc{D}(B,\dif_B) \lra \mc{D}(A,\dif_A),\quad f_*(N):=\mathbf{R}\HOM_B(B\epsilon, N)=\epsilon N.
\end{equation}
Thus in this situation, the restriction functor is none other than the usual idempotent truncation on modules. In our particular case, the effect of the restriction functor along the thickening map $\Theta$ is seen through the next result.

\begin{thm}\label{thm-main}
The $p$-DG $2$-functor $\Theta:\mc{U}\lra \mc{\dot{U}}$ induces a derived equivalence from $\mc{D}(\mc{U},\dif_0)$ onto the full subcategory $\mc{D}_{(p)}(\dot{\mc{U}},\dif)$. Consequently the restriction map along $\Theta$
$$
\Theta_*:\mc{D}^c(\mc{\dot{U}},\dif)\lra \mc{D}^c(\mc{U},\dif_0)
$$
is a categorification of the quantum Frobenius map (Definition \ref{def-quantum-Frob}) for $\mathfrak{sl}_2$ over the ring $\mathbb{O}_p$. 
\end{thm}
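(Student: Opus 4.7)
The plan is to verify that $\Theta$ is a well-defined weak $p$-DG $2$-functor (in the sense of Remark \ref{rem-weak-thicking-half}), exhibit it as an equivalence onto $\mc{D}_{(p)}(\dot{\mc{U}},\dif)$, and then decategorify to recover the quantum Frobenius map. By Lemma \ref{lem-theta-onto-cocyle}, $\Theta$ sends each generating $2$-morphism to a $0$-cocycle, so it commutes with the $p$-differential on generators. To verify that Rouquier's relations from Definition \ref{def-u-dot} hold up to acyclic corrections, I would invoke Brundan's simplified relation-checking criterion \cite{Brundan2KM}, reducing the task to the upward nilHecke relations, biadjunction, and invertibility of $\sigma_n$. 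The first is exactly Theorem \ref{thm-nil-Hecke-holds}, which gives the nilHecke relations on thickness-$p$ strands at the slash-cohomology level. The biadjunction relations \eqref{eqn-adjoint} follow from the biadjoint structure of thick $\mc{E}^{(p)}$ and $\mc{F}^{(p)}$ in $\dot{\mc{U}}$ inherited from \cite{KLMS}. The invertibility of $\Theta(\sigma_n)$ on each weight-$kp$ region is precisely Proposition \ref{prop-invertiblity}, with the explicit inverse identified in Remark \ref{rmk-thick-inverses}.

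Essential surjectivity of $\Theta$ onto $\mc{D}_{(p)}(\dot{\mc{U}},\dif)$ is immediate from Definition \ref{def-full-sl2-subcategory}, since that subcategory is by construction generated by the objects in the image of $\Theta$. For full faithfulness, I would check that $\Theta$ induces quasi-isomorphisms on $p$-DG endomorphism algebras of finite direct sums of composite generators. The strategy is to expand such algebras via Sto\v{s}i\'{c}'s formula \eqref{eqn-Stosic-formula-algebraic}, showing that the non-Lima summands form acyclic ideals (paralleling the proof of Proposition \ref{prop-formality-end-algebra-rank-two} but extended to mixed composites), and identifying the slash cohomology of the remaining Lima piece with the nilHecke-type endomorphism algebras on the thin side via Theorem \ref{thm-nil-Hecke-holds}. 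Proposition \ref{prop-invertiblity} is used to move $\mc{E}^{(p)}$'s past $\mc{F}^{(p)}$'s in the derived category, reducing the verification to purely upward composites.

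For the decategorification, combining Theorem \ref{thm-big-sl2} and Corollary \ref{cor-catfn-classical-enveloping-algebra} yields $K_0(\mc{D}^c(\dot{\mc{U}},\dif)) \cong \dot{U}_{\mathbb{O}_p}$ and $K_0(\mc{D}^c(\mc{U},\dif_0)) \cong \dot{U}_\rho$. The restriction functor $\Theta_*$, being idempotent truncation onto weights divisible by $p$ via \eqref{eqn-derived-restriction}, sends a canonical basis module $\mc{E}^{(a)}\mc{F}^{(b)}\1_n$ (resp.~$\mc{F}^{(b)}\mc{E}^{(a)}\1_n$) to zero unless $p$ divides each of $a$, $b$, $n$; in the divisible case, Lemmas \ref{lem-reduction-1} and \ref{lem-reduction-2}, read as decompositions of divided-power modules into composites of thickness-$p$ generators, identify the surviving piece with the canonical basis element $\mathsf{E}^{(a/p)}\mathsf{F}^{(b/p)}\1_{n/p}$ of $\dot{U}_\rho$, matching Definition \ref{def-quantum-Frob}.

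The main obstacle is full faithfulness of $\Theta$ on mixed upward-downward composites. Theorem \ref{thm-nil-Hecke-holds} covers only purely "positive" composites of $\mc{E}^{(p)}$'s, and extending to composites involving both $\mc{E}^{(p)}$'s and $\mc{F}^{(p)}$'s requires carefully separating out the non-Lima summands appearing in Sto\v{s}i\'{c}'s decomposition and showing they contribute only acyclic cofibrant $p$-DG modules. Here Lemma \ref{lem-p-Lima-acyclic-case} is crucial, and Proposition \ref{prop-invertiblity} plays the structural role of letting one ``straighten out'' crossings in the enhanced derived category, ultimately reducing the calculation to the positive case already handled by Theorem \ref{thm-nil-Hecke-holds}.
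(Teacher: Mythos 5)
Your relation-checking and surjectivity steps match the paper's proof: the nilHecke relations come from Theorem \ref{thm-nil-Hecke-holds}, the adjunction relations from a direct computation with \eqref{eqn-dif-thick-cup-cap-1}--\eqref{eqn-dif-thick-cup-cap-3}, and the invertibility of $\Theta(\sigma_n)$ from Proposition \ref{prop-invertiblity} (the paper handles $n\leq 0$ by composing the rotational symmetry and vertical reflection of $\dot{\mc{U}}$, which preserve $\dif$ --- a case you leave implicit). Essential surjectivity onto $\mc{D}_{(p)}(\dot{\mc{U}},\dif)$ is by construction, as you say.

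Where you diverge --- and where your proposal is genuinely incomplete --- is full faithfulness. You propose to compute the $p$-DG endomorphism algebras of arbitrary mixed composites of $\mc{E}^{(p)}$'s and $\mc{F}^{(p)}$'s directly via Sto\v{s}i\'{c}'s formula, separating Lima from non-Lima summands, and you honestly flag this as the main obstacle without carrying it out. The paper sidesteps this entirely: once the Rouquier relations are verified, $\mc{D}_{(p)}(\dot{\mc{U}},\dif)$ is a categorical $\mc{U}$-representation by Theorem \ref{thm-catfn-sl2-generic}, hence a quotient of $\mc{D}(\mc{U},\dif_0)$; full faithfulness then reduces, via the nondegeneracy of the Khovanov--Lauda calculus (morphism spaces in $\mc{U}$ are free modules over the bubble algebras $\END_{\mc{U}}(\1_n)$ with explicit diagrammatic bases), to checking that $\Theta$ is injective on the derived centers, which is exactly Corollary \ref{cor-qis-centers}. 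Your direct route could in principle be completed --- Lemma \ref{lem-p-Lima-acyclic-case} and Proposition \ref{prop-invertiblity} are the right tools --- but as written it is a sketch of a harder argument, and you should either carry out the mixed-composite computation or replace it with the reduction-to-the-center argument. Your decategorification step is consistent with the paper's (which defers the canonical-basis matching to Corollary \ref{cor-Frobenius-and-canonical-basis}), though the vanishing of $\Theta_*$ on $\mc{E}^{(a)}\mc{F}^{(b)}\1_n$ when $p\nmid a$, $b$, or $n$ is cleanest deduced on $K_0$ from the established equivalence rather than argued module-by-module.
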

\begin{proof}
We check that the relations in Definition \ref{def-u-dot} are satisfied on the image of $\Theta$.

Relation (i) follows from Theorem \ref{thm-nil-Hecke-holds} for the map $\Theta^+$, and Relation (ii) is a simple computation using \eqref{eqn-dif-thick-cup-cap-1}--\eqref{eqn-dif-thick-cup-cap-3}. The first half of Relation (iii) ($n\geq 0$ case) holds by Proposition \ref{prop-invertiblity}. Composing the rotational symmetry and vertical reflection of $\dot{\mc{U}}$ (see \cite[Corollary 4.9]{EQ1}) preserves $\dif$ and tells us that the second half of Relation (iii) can be deduced from the first half. Alternatively, one can carry out a similar proof as in Proposition \ref{prop-invertiblity} in this situation. Hence the (enhanced) derived category $\mc{D}_{(p)}(\mc{\dot{U}},\dif)$ is a categorical $\mc{U}$-representation by Theorem \ref{thm-catfn-sl2-generic}, and thus must be a categorical quotient of $\mc{D}(\mc{U},\dif_0)$ under $\Theta$. 

Finally, to show that the functor $\Theta$ is fully-faithful on derived categories, one observes that, modulo 
the central parts of the categories generated by thin and thick bubbles respectively, the endormorphism algebras of $\mc{E}\1_n$, $\mc{F}\1_n$ in $\mc{D}(\mc{U},\dif_0)$ are identified with endormphism algebras of $\mc{E}^{(p)}\1_{np}$ and $\mc{F}^{(p)}\1_{{np}}$ in $\mc{D}(\dot{\mc{U}},\dif)$ under $\Theta$. It remains to show that $\Theta$ restricted to the derived central parts $(\END_{\mc{U}}(\1_n),\dif_0)$ and $(\END_{\dot{\mc{U}}}(\1_{np}),\dif)$ are isomorphisms. The result now follows from Corollary \ref{cor-qis-centers}.
\end{proof}

Recall from Definition \ref{defn-canonical-basis} that we have specialized canonical bases for $\dot{U}_{\mathbb{O}_p}$ and $\dot{U}_\rho$, which are denoted $\mathbb{\dot{B}}_{\mathbb{O}_p}$ and $\mathbb{\dot{B}}_{\rho}$ respectively.

\begin{cor}\label{cor-Frobenius-and-canonical-basis}
Under the derived equivalence $\Theta_*:\mc{D}(\mc{U},\dif_0)\lra \mc{D}_{(p)}(\mc{\dot{U}},\dif)$, the $(\mc{U},\dif_0)$-modules in the collection
$$
\mc{B}_1:=\left\{
\mc{E}^{(a)}\mc{F}^{(b)}\1_n,\mc{F}^{(a)}\mc{E}^{(b)}\1_m|n\leq b-a, m\geq a-b\right\},
$$
which lift the specialized canonical basis $\mathbb{\dot{B}}_{\rho}$, are identified with the $p$-DG modules over $(\mc{\dot{U}},\dif)$ inside
\[
\mc{B}_2:=\left\{
\mc{E}^{(ap)}\mc{F}^{(bp)}\1_{np},\mc{F}^{(ap)}\mc{E}^{(bp)}\1_{mp}|n\leq b-a, m\geq a-b\right\}.
\] 
The symbols of the modules in $\mc{B}_2$ categorify the canonical basis elements $\mathbb{\dot{B}}_{\mathbb{O}_p}$ which lie in the idempotented subalgebra of $\dot{U}_{\mathbb{O}_p}$ generated by $\{E^{(p)}1_{np}, F^{(p)}1_{np}|n\in \Z\}$.
\end{cor}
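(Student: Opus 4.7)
The corollary is an immediate packaging of Theorem \ref{thm-main} and Theorem \ref{thm-big-sl2}, conditional on matching divided powers on the two sides of the derived equivalence. Concretely, one needs to upgrade the generator-level identifications $\Theta(\mc{E}\1_n) \cong \mc{E}^{(p)}\1_{np}$ and $\Theta(\mc{F}\1_n) \cong \mc{F}^{(p)}\1_{np}$ to
\[
\Theta(\mc{E}^{(a)}\1_n) \cong \mc{E}^{(ap)}\1_{np}, \qquad \Theta(\mc{F}^{(a)}\1_n) \cong \mc{F}^{(ap)}\1_{np},
\]
in $\mc{D}_{(p)}(\dot{\mc{U}},\dif)$ for every $a \in \N$ and $n\in\Z$, and then compose.

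The first step would therefore be to identify each divided power on both sides as a summand cut out by a specific idempotent. On the $(\mc{U},\dif_0)$ side, $\mc{E}^{(a)}\1_n$ is by construction the image of the top-degree primitive idempotent in the ordinary nilHecke algebra $\nh_a$ acting on $\mc{E}^a\1_n$; on the $(\dot{\mc{U}},\dif)$ side, $\mc{E}^{(ap)}\1_{np}$ is the image of the corresponding top idempotent in the thick calculus $\END_{\sym_{ap}}(\s_{(p^a)})$ acting on $\mc{E}^{ap}\1_{np} \cong (\mc{E}^{(p)})^a\1_{np}$. Theorem \ref{thm-nil-Hecke-holds} says precisely that these two endomorphism $p$-DG algebras are quasi-isomorphic via the thickening map $\Theta^+$, and inspection of $\Theta^+$ on diagrammatic generators (Definition \ref{def-half-thicken}) shows that it sends the nilHecke top idempotent to a cocycle representative of the thick top idempotent on slash cohomology. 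Hence the two summand projections agree in the (enhanced) derived categories, giving the divided power identification above. The analogous statement for $\mc{F}$ follows by the vertical reflection symmetry of $\dot{\mc{U}}$ (used already in the proof of Theorem \ref{thm-main}).

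The second step is bookkeeping. Composing horizontally yields $\Theta(\mc{E}^{(a)}\mc{F}^{(b)}\1_n) \cong \mc{E}^{(ap)}\mc{F}^{(bp)}\1_{np}$ and symmetrically for $\mc{F}^{(a)}\mc{E}^{(b)}$, so the collections $\mc{B}_1$ and $\mc{B}_2$ match under the derived equivalence of Theorem \ref{thm-main}. The weight constraints $n \le b-a$ and $m \ge a-b$ on the $\mc{U}$ side translate verbatim to $np \le bp - ap$ and $mp \ge ap - bp$ on the $\dot{\mc{U}}$ side, which are exactly the parameterizations appearing in Definition \ref{defn-canonical-basis} specialized to weights divisible by $p$ and divided powers divisible by $p$. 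By Theorem \ref{thm-big-sl2}, the symbols $[\mc{E}^{(ap)}\mc{F}^{(bp)}\1_{np}]$ and $[\mc{F}^{(ap)}\mc{E}^{(bp)}\1_{mp}]$ in $K_0(\mc{D}^c(\dot{\mc{U}},\dif))\cong \dot{U}_{\mathbb{O}_p}$ are the corresponding canonical basis elements $E^{(ap)}F^{(bp)}\1_{np}$ and $F^{(ap)}E^{(bp)}\1_{mp}$ of $\dot{\mathbb{B}}_{\mathbb{O}_p}$. All such elements lie in the subalgebra generated by $\{E^{(p)}1_{np}, F^{(p)}1_{np} \mid n\in\Z\}$ because $E^{(ap)}\1_{np}$ (resp.~$F^{(ap)}\1_{np}$) can be obtained from an $a$-fold product of $E^{(p)}$'s (resp.~$F^{(p)}$'s) by dividing by a quantum binomial of the form $\binom{ap}{p,p,\dots,p}_{\mathbb{O}_p}$, which is non-zero in $\mathbb{O}_p$ precisely because all relevant entries are divisible by $p$ (cf.~Remark \ref{rmk-categorifying-binomial-reduction}).

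The main obstacle is the first step: verifying that the thick top idempotent in $\END_{\sym_{ap}}(\s_{(p^a)})$ really does correspond to the image of the nilHecke top idempotent under $\Theta^+$ at the level of slash cohomology. This is essentially a diagrammatic calculation, already implicit in Theorem \ref{thm-nil-Hecke-holds} and Remark \ref{rem-weak-thicking-half}, but it needs to be spelled out because the weak monoidality of $\Theta$ means relations only hold up to null-homotopy, so the match of idempotents is a statement about cocycle representatives rather than on-the-nose identities.
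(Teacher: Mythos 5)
Your overall skeleton matches the paper's: the corollary is extracted from Theorem \ref{thm-main} together with the fact that $\Theta$ induces, locally, a quasi-isomorphism between the endomorphism $p$-DG algebras of the corresponding objects. Where you differ is in how the two families of indecomposables are actually matched. You propose to chase the top primitive idempotent of $\mathrm{NH}_a$ through $\Theta^+$ and check that its image is a cocycle representative of the thick idempotent cutting out $\mc{E}^{(ap)}\1_{np}$ inside $(\mc{E}^{(p)})^{a}\1_{np}$ --- and you correctly identify this as the step you have not actually carried out. The paper sidesteps that computation: it observes that for any finite subset of $\mc{B}_1$ the endomorphism $p$-DG algebra is basic and positively graded with zero differential (modulo the center), so the symbols of the indecomposables automatically form a basis of the corresponding piece of $K_0$, the same holds for the matching subset of $\mc{B}_2$, and the local quasi-isomorphism induced by $\Theta$ then forces the two parametrized families to correspond, grading normalizations included. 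Your route should work, but the basicness argument buys the matching without exhibiting cocycle representatives of idempotents and settles the grading-shift normalization (which your sketch leaves implicit) for free. One small slip along the way: $\mc{E}^{ap}\1_{np}\cong(\mc{E}^{(p)})^{a}\1_{np}$ is false --- in $\mc{D}(\dot{\mc{U}},\dif)$ one has $\mc{E}^{ap}\1_{np}\cong 0$ since $(\mathrm{NH}_p,\dif)$ is acyclic; the algebra $\END_{\sym_{ap}}(\s_{(p^a)})$ you want acts on $(\mc{E}^{(p)})^{a}\1_{np}$, not on $\mc{E}^{ap}\1_{np}$.

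The genuine error is in your last step. You assert that $E^{(ap)}1_{np}$ lies in the subalgebra generated by the $E^{(p)}1_{kp}$, $F^{(p)}1_{kp}$ because $(E^{(p)})^{a}1_{np}$ equals a quantum multinomial coefficient times $E^{(ap)}1_{np}$ and that coefficient is ``non-zero in $\mathbb{O}_p$''. Non-zero does not permit division: the coefficient equals a power of $q$ times $a!$, and $a!$ is not a unit in $\mathbb{O}_p$ for $a\geq 2$ (already $2$ is not invertible in $\Z[v^{\pm 1}]/(\Psi_p(v^2))$, since the quotient of this ring by $2$ is non-zero). The correct reading is the categorical one used in Theorem \ref{thm-4-generators} and Lemma \ref{lem-reduction-2}: $\mc{E}^{(ap)}\1_{np}$ is a direct summand of $(\mc{E}^{(p)})^{a}\1_{np}$ in the derived category, so its symbol lies in $K_0$ of the thick subcategory $\mc{D}_{(p)}(\dot{\mc{U}},\dif)$ generated by $\mc{E}^{(p)}\1_{kp}$ and $\mc{F}^{(p)}\1_{kp}$; it is this idempotented, Karoubi-closed ``subalgebra'' that the last sentence of the corollary refers to, not the literal $\mathbb{O}_p$-span of products of the generators.
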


\begin{proof}
This is a simple consequence of Theorem \ref{thm-main}. Here, one observes that, given a finite subset of $p$-DG modules in $\mc{B}_1$ equipped with the zero differential, the endomorphism $p$-DG algebra for this subset of modules over $(\mc{U},\dif_0)$ is a basic, positively graded $p$-DG algebra with zero differential (here we ignore the symmetric functions). Hence, in the Grothendieck group, the symbols of the modules form a basis. Likewise, a similar property holds for the $p$-DG modules in $\mc{B}_2$ parametrized by the same subset $I$. Locally, the $2$-functor $\Theta$ induces a quasi-isomorphism of the corresponding $p$-DG algebras. 
\end{proof}

\begin{rem}\label{rmk-wrong-way-map}
In fact, the derived induction $2$-functor for $\Theta$ (equation \eqref{eqn-derived-induction})
\[
\Theta^*: \mc{D}(\mc{{U}},\dif_0)\lra \mc{D}(\mc{\dot{U}},\dif)
\]
gives rise to a categorification of a section map for the quantum Frobenius \eqref{eqn-Frobenius-sequence}.

 The reader may wonder why the canonical functor $\Theta$ is going the ``wrong'' way from the $p$-DG $2$-category $(\mc{U},\dif_0)$ to $(\dot{\mc{U}},\dif)$ rather than the ``correct'' direction of the quantum Frobenius map. Such a trick has to be employed in the current stage of the work because we do not yet have a $p$-DG categorical ``quotient'' construction of $\mc{D}(\dot{\mc{U}},\dif)$ by a categorified small quantum group. In \cite{EQ2}, we have realized the categorified small quantum $\dot{u}_{\mathbb{O}_p}$ as the full subcategory in $\mc{D}(\mc{\dot{U}},\dif)$ monoidally generated by strands of thickness $1$. Furthermore, this subcategory is equivalent to $\mc{D}(\mc{U},\dif)$ with a nontrivial differential studied in \cite{KQ,EQ1}. In this sense, the first part of the quantum Frobenius sequence \eqref{eqn-Frobenius-sequence} is categorified by this (derived) embedding
\[
\mc{D}(\mc{U},\dif)\hookrightarrow \mc{D}(\mc{\dot{U}},\dif)
\]
 
In upcoming works, we would like to find the analogue of Drinfeld's ``DG quotient'' construction \cite{DrDG} in the context of hopfological algebra. The current version of Theorem \ref{thm-weak-categorification-half-sl2} and \ref{thm-main} can then be utilized to give an explicit $p$-DG realization of the (derived) quotient category, denoted $\mc{D}(\dot{\mc{U}}/\!/\mc{U})$, as $\mc{D}(\mc{U},\dif_0)$. Then we expect the entire sequence \eqref{eqn-Frobenius-sequence}  to be lifted:
\[
\mc{D}(\mc{U},\dif)\hookrightarrow \mc{D}(\mc{\dot{U}},\dif) \twoheadrightarrow \mc{D}(\dot{\mc{U}}/\!/\mc{U})\cong \mc{D}(\mc{U},\dif_0).
\]
 
The current work has used the fact that the canonical basis is explicitly known for quantum $\mf{sl}_2$ by work of Lusztig \cite{Lus4}.  For instance, this is used in proving that the Grothendieck group of $\mc{D}(\mc{\dot{U}},\dif)$ is isomorphic to $\dot{U}_{\mathbb{O}_p}$, which was done by lifing canonical basis elements to explict $p$-DG modules over $(\dot{\mc{U}},\dif)$ (\cite[Theorem 6.2]{EQ2}). This method will not generalize beyond $\mf{sl}_2$ as canonical bases for higher ranks are unknown, and probably are not even fully classifiable. However, such quotient constructions should be applicable to any simply-laced quantum groups, and the quotient maps from $\mc{D}(\dot{\mc{U}})$ onto 
$\mc{D}(\dot{\mc{U}} /\!/ \mc{U})$
 will be a categorical lifting of the corresponding quantum Frobenius map.
\end{rem}

\addcontentsline{toc}{section}{References}


\bibliographystyle{alpha}
\bibliography{qy-bib}


\vspace{0.1in}

\noindent Y.~Q.: { \sl \small Department of Mathematics, Yale University, New Haven, CT 06511, USA} \newline \noindent {\tt \small email: you.qi@yale.edu}

%
\end{document}